\newcommand{\IP}{\mathbb{P}}
\newcommand{\R}{\mathbb{R}}
\newcommand{\C}{\mathbb{C}}
\newcommand{\N}{\mathbb{N}}
\newcommand{\IS}{\mathbb{S}}
\newcommand{\IA}{\mathbb{A}}
\newcommand{\gb}{\beta}
\renewcommand{\gg}{\gamma}
\newcommand{\gk}{\kappa}
\newcommand{\go}{\omega}
\newcommand{\gs}{\sigma}
\newcommand{\vp}{\varphi}
\newcommand{\vt}{\vartheta}
\newcommand{\gO}{\Omega}
\newcommand{\cA}{\mathcal{A}}
\newcommand{\cB}{\mathcal{B}}
\newcommand{\cG}{\mathcal{G}}
\newcommand{\cH}{\mathcal{H}}
\newcommand{\cI}{\mathcal{I}}
\newcommand{\cN}{\mathcal{N}}
\newcommand{\cT}{\mathcal{T}}
\newcommand{\cW}{\mathcal{W}}
\newcommand{\cY}{\mathcal{Y}}
\newcommand{\Op}{\operatorname{O}}
\newcommand{\dd}{\mathrm{d}}
\DeclareMathOperator{\E}{\mathbb{E}} 
\DeclareMathOperator{\Span}{span}
\DeclareMathOperator{\supp}{supp}
\DeclareMathOperator{\Id}{Id}
\let\Re\relax
\DeclareMathOperator{\Re}{Re}
\let\Im\relax
\DeclareMathOperator{\Im}{Im}
\newcommand{\KL}{Karhunen--Lo\`eve }
\newtheorem{lemma}{Lemma}[section]
\newtheorem{proposition}[lemma]{Proposition}
\newtheorem{theorem}[lemma]{Theorem}
\newtheorem{corollary}[lemma]{Corollary}
\theoremstyle{remark}
\newtheorem{remark}[lemma]{Remark}
\theoremstyle{definition}
\begin{document}
\title[Lognormal diffusions on the sphere]{
Numerical analysis of lognormal diffusions on the sphere
}

\author[L.~Herrmann]{Lukas Herrmann} 
\address[Lukas Herrmann]{\newline Seminar f\"ur Angewandte Mathematik
\newline ETH Z\"urich, \newline R\"amistrasse 101, CH--8092 Z\"urich, Switzerland.
} \email[]{lukas.herrmann@sam.math.ethz.ch}

\author[A.~Lang]{Annika Lang} \address[Annika Lang]{\newline Department of Mathematical Sciences
\newline Chalmers University of Technology \& University of Gothenburg, 
\newline SE--412 96 G\"oteborg, Sweden. } \email[]{annika.lang@chalmers.se}

\author[Ch.~Schwab]{Christoph Schwab} \address[Christoph Schwab]{\newline Seminar f\"ur Angewandte Mathematik 
\newline ETH Z\"urich, \newline R\"amistrasse 101, CH--8092 Z\"urich, Switzerland.} 
\email[]{schwab@math.ethz.ch}

\thanks{%
\emph{Acknowledgment}.
The work was supported in part by the European Research Council under 
ERC~AdG~247277, the Swiss National Science Foundation under SNF~200021\_159940/1,
the Knut and Alice Wallenberg foundation,
and the Swedish Research Council under Reg.~No.~621-2014-3995.
We thank three anonymous referees for comments that improve the presentation.
}

\date{\today}
\subjclass{60G60, 60G15, 60G17, 33C55, 41A25, 60H15, 60H35, 65C30, 65N30}
\keywords{
Isotropic Gaussian random fields, lognormal random fields, \KL expansion, 
spherical harmonic functions, stochastic partial differential equations, 
random partial differential equations, 
regularity of random fields, Finite Element Methods, Spectral Galerkin Methods,
multilevel Monte Carlo methods%
}

\begin{abstract}
Numerical solutions of
stationary diffusion equations on the unit sphere with isotropic 
lognormal diffusion coefficients are considered.
H\"older regularity in $L^p$ sense for isotropic Gaussian random fields is obtained and 
related to the regularity of the driving lognormal coefficients. This yields regularity in $L^p$ sense 
of the solution to the diffusion problem in Sobolev spaces.
Convergence rate estimates of 
multilevel Monte Carlo Finite and Spectral Element discretizations 
of these problems are then deduced. 
Specifically, a convergence analysis 
is provided with convergence rate estimates in terms of
the number of Monte Carlo samples of the solution to the considered diffusion equation
and 
in terms of the total number of degrees of freedom of the spatial discretization,
and with bounds for the total work 
required by the algorithm in the case of Finite Element discretizations.
The obtained convergence rates
are solely in terms of the decay of the angular power spectrum
of the (logarithm) of the diffusion coefficient.
Numerical examples confirm the presented theory.
\end{abstract}

\maketitle
\tableofcontents

%
\section{Introduction}
In the present paper, we are concerned with the existence,
regularity, and approximation of solutions of elliptic partial
differential equations (PDEs for short) with stochastic coefficients
on the unit sphere~$\IS^2$.
In particular, we are interested in PDEs with
\emph{isotropic lognormal random field} coefficients $a$, i.e., $T = \log a$ 
is an isotropic Gaussian random field (iGRF for short) on $\IS^2$.
For a given smooth, deterministic source term $f$, and for a positive 
random field $a$ taking values in $C^0(\IS^2)$,
we consider the stochastic elliptic problem
\begin{equation}\label{eq:ellSPDE}
-\nabla_{\IS^2} \cdot (a \nabla_{\IS^2} u ) = f \quad \text{on}\; \IS^2.
\end{equation}

Since $\partial \IS^2 = \emptyset$ (as boundary of a manifold),
no boundary conditions 
are required for the well-posedness of~\eqref{eq:ellSPDE}. 
The regularity and integrability of solutions in terms of the random field~$a$ as well as
error and convergence rate analysis of 
Finite Element and Spectral Galerkin discretizations on~$\IS^2$ 
combined with multilevel Monte Carlo (MLMC for short) sampling 
are the purpose of the present paper. 

While the combined Finite Element MLMC discretization of PDEs with random input
data has received considerable attention in recent years (see, for example, 
\cite{ChScTe_SINUM13,gknsss15} and the survey \cite{GilesMLMC2015} 
originating from Heinrich \cite{HeinrichMLMC}),
the invariance properties of the particular geometry $\IS^2$ entail several specific
consequences in the numerical analysis which allow more precise convergence results.
Specifically, as we showed in~\cite{LS15, Herrmann13}, 
the geometric setting of~$\IS^2$ allows for an essentially sharp characterization
of H\"older regularity exponents of realizations of $a$ in terms of the angular power spectrum 
of the \KL expansion of the Gaussian random field $T = \log a$. 
Furthermore, $\partial \IS^2 = \emptyset$ implies the absence of corner 
singularities. 
We are therefore able to obtain
elliptic regularity estimates in Sobolev scales, cp.\ ~\cite{hackbusch2010Elldifequ}, 
as well as Schauder estimates of
classical elliptic regularity theory as presented for example in~\cite{GilTrud2ndEd} 
and elaborated in detail for the presently considered 
PDE~\eqref{eq:ellSPDE} in~\cite{Herrmann13}.
Based on these we derive
explicit convergence rate bounds of discretizations of~\eqref{eq:ellSPDE}.
Particularly,  we obtain convergence rates with respect to the mesh width of 
Finite Element discretizations and to the spectral degree of Spectral Galerkin discretizations on~$\IS^2$ solely in terms of the decay of the angular power spectrum of the Gaussian random field $T = \log a$.
These convergence rates are, in the Finite Element case, 
bounded by the polynomial degree of the basis functions.
We confine our error analysis to sufficiently smooth source terms~$f$ in~\eqref{eq:ellSPDE},
which yields that the lack of smoothness of solutions is caused by the roughness of the lognormal random coefficients~$a$.

Throughout the paper, we employ standard notation. 
We denote in particular by $H^s(\IS^2)$ Sobolev spaces of square integrable 
functions of (not necessarily integer) order $s$  on~$\IS^2$.
By $\nabla_{\IS^2}$, $\nabla_{\IS^2}\cdot$, and by
$\Delta_{\IS^2} = \nabla_{\IS^2}\cdot \nabla_{\IS^2}$
we denote the spherical gradient, the spherical divergence, and the
Laplace--Beltrami operator on $\IS^2$, respectively.

The outline of the paper is as follows:
In Section~\ref{sec:iGRF} we recapitulate basic properties of
iGRFs from~\cite{MP11,Charrier_SINUM12}.
We introduce standard notation and classical results 
from the differential geometry of surfaces as required 
in the ensuing developments.
We also review results on the H\"older regularity
of realizations of the random field from our earlier work~\cite{LS15}, 
and relate the H\"older exponent 
to the angular power spectrum. 
We develop H\"older regularity here in the $L^p$ sense.
In Section~\ref{sec:ExUniqRegSol}
we review and establish basic results
on existence, uniqueness, integrability, and regularity of solutions
to the stochastic partial differential equation (SPDE for short)~\eqref{eq:ellSPDE}.
In Section~\ref{sec:Discr} we present isoparametric
Finite Element (FE for short) discretizations
of  the SPDE~\eqref{eq:ellSPDE} on~$\IS^2$ and
establish a priori estimates on their convergence.
Particular attention is given to the dependence of the convergence rate
on the H\"older regularity of the random field~$a$.
In Section~\ref{sec:Discr}, 
we prove convergence rate estimates for two families of discretizations
of~\eqref{eq:ellSPDE}. Section~\ref{sec:FEMS2} is devoted to the analysis
of Finite Element discretizations, while Section~\ref{sec:SpecElS2} 
to the convergence analysis of Spectral Galerkin discretizations.
In Section~\ref{sec:MLMC} we address the
convergence of multilevel Monte Carlo methods
for either variant of the Galerkin discretizations.
Numerical examples that confirm the presented theory are presented in Section~\ref{sec:num_exp}.
Finally, some lengthy proofs are given in the appendix.

\section{Isotropic Gaussian and lognormal random fields}\label{sec:iGRF}

In this section we introduce isotropic Gaussian random fields on the unit sphere~$\IS^2$ and their 
properties. We focus in particular on \KL expansions of these random fields and their regularity in terms of H\"older continuity and $L^p$ integrability. Furthermore, similar results are presented for spectral approximations as well as the corresponding lognormal random fields. The section is based on results from~\cite{MP11} and \cite{LS15} and follows closely the master's thesis~\cite{Herrmann13} of one of the authors.

Let the unit sphere~$\IS^2$ in~$\R^3$ be given by
\begin{equation*}
 \IS^2 := \{ x \in \R^3, \|x\|_{\R^3} = 1\},
\end{equation*}
where $\| \cdot \|_{\R^3}$ denotes the Euclidean norm on~$\R^3$. 
Consider the compact metric space $(\IS^2,d)$ with geodesic metric given by
  \begin{equation*}
   d(x,x')
    := \arccos (\langle x,x' \rangle_{\R^3})
  \end{equation*}
for every $x,x' \in \IS^2$, where $\langle \cdot,\cdot \rangle_{\R^3}$ denotes the corresponding Euclidean inner product.
Furthermore, let $(\gO, \cA, \IP)$ be a probability space and $T$ a $2$-weakly iGRF on~$\IS^2$.
Then, by~\cite[Theorem~5.13]{MP11}, $T$ admits an expansion with respect to the \emph{surface spherical harmonic functions} 
$\cY := (Y_{\ell m}, \ell \in \N_0, m=-\ell, \ldots, \ell)$ 
as mappings 
$Y_{\ell m}: [0,\pi] \times [0,2\pi) \rightarrow \C$, 
which are given by
  \begin{equation*}
   Y_{\ell m}(\vt, \vp)
    := \sqrt{\frac{2\ell + 1}{4\pi}\frac{(\ell-m)!}{(\ell+m)!}} P_{\ell m}(\cos \vt) e^{im\vp}
  \end{equation*}
for $\ell \in \N_0$, $m = 0,\ldots, \ell$, and by
\begin{equation*}
   Y_{\ell m}
    := (-1)^m \overline{Y_{\ell -m}}
\end{equation*}
for $\ell \in \N$ and $m=-\ell, \ldots,-1$.
Here $(P_{\ell m}, \ell \in \N_0,m=0,\ldots,\ell)$ denote the \emph{associated Legendre functions} which are given by
\begin{equation*}
   P_{\ell m}(\rho)
    := (-1)^m (1-\rho^2)^{m/2} \frac{\partial^m}{\partial \rho^m} P_\ell(\rho)
\end{equation*}
for $\ell \in \N_0$, $m = 0,\ldots,\ell$, and $\rho \in [-1,1]$,
where 
$(P_\ell, \ell \in \N_0)$ are the \emph{Legendre polynomials} 
given by Rodrigues' formula (see, e.g., \cite{Szego})
\begin{equation*}
P_\ell(\rho)
:= 
2^{-\ell} \frac{1}{\ell!} \, \frac{\partial^\ell}{\partial \rho^\ell} (\rho^2 -1)^\ell
\end{equation*}
for all $\ell \in \N_0$ and $\rho \in [-1,1]$.
This expansion of~$T$ converges in $L^2(\gO\times \IS^2)$ as well as for every $x \in \IS^2$ in~$L^2(\gO)$ and
is given by (see, e.g., \cite[Corollary~2.5]{LS15})
  \begin{equation*}
   T
    = \sum_{\ell=0}^\infty \sum_{m=-\ell}^\ell a_{\ell m} Y_{\ell m},
  \end{equation*}
where $\IA := (a_{\ell m}, \ell \in \N_0, m= - \ell, \ldots, \ell)$ 
is a sequence of complex-valued, centered, Gaussian random variables 
with the following properties:
\begin{enumerate}
 \item $\IA_+ := (a_{\ell m}, \ell \in \N_0, m = 0,\ldots,\ell)$ is a  
       sequence of independent, complex-valued Gaussian random variables.
 \item The elements of $\IA_+$ with $m>0$ satisfy  
       $\Re a_{\ell m}$ and $\Im a_{\ell m}$ are independent and $\cN(0,A_\ell/2)$ distributed.
   \item 
   The elements of $\IA_+$ with $m=0$ are real-valued and
   the elements $\Re a_{\ell 0}$ are $\cN(0,A_\ell)$ 
   distributed for $\ell \in \N$ while $\Re a_{00}$ is $\cN(\E(T)2\sqrt{\pi},A_0)$ distributed.
   \item The elements of~$\IA$ with $m <0$ are deduced from those of $\IA_+$
    by the formulae
    \begin{equation*}
    \Re a_{\ell m} = (-1)^m \Re a_{\ell -m},
      \quad
    \Im a_{\ell m} = (-1)^{m+1} \Im a_{\ell -m}.    
    \end{equation*}
  \end{enumerate}
Here $(A_\ell, \ell \in \N_0)$ is called the \emph{angular power spectrum}.

In what follows we set $Y_{\ell m}(y) := Y_{\ell m}(\vt,\vp)$ for $y \in \IS^2$,
where we identify (with a slight abuse of notation) 
Cartesian and angular coordinates by $y := (\sin \vt \cos \vp, \sin \vt \sin \vp, \cos \vt)$,
and we do not separate indices for doubly sub-~or superscripted
functions and coefficients by a comma, with the understanding
that the reader will recognize double indices as such.
Furthermore, we denote by $\gs$ the \emph{Lebesgue measure on the sphere}
which admits the representation
  \begin{equation*}
   \mathrm d\gs(y)
    = \sin \vt \, \mathrm d\vt \, \mathrm d\vp
  \end{equation*}
for $y \in \IS^2$, $y = (\sin \vt \cos \vp, \sin \vt \sin \vp, \cos \vt)$.

We define the \emph{spherical Laplacian}, also called 
\emph{Laplace--Beltrami operator}, in terms of spherical coordinates 
similarly to~\cite[Section~3.4.3]{MP11} by
  \begin{equation*}
   \Delta_{\IS^2}
    := (\sin \vt)^{-1} \frac{\partial}{\partial \vt} \left( \sin \vt \, \frac{\partial}{\partial \vt} \right)
	+ (\sin \vt)^{-2} \frac{\partial^2}{\partial \vp^2}.
  \end{equation*}
It is well-known (see, e.g., \cite[Theorem~2.13]{M98})
that the spherical harmonic functions~$\cY$ 
are the eigenfunctions of~$-\Delta_{\IS^2}$ with 
eigenvalues $(\ell(\ell+1), \ell \in \N_0)$, i.e.,
  \begin{equation*}
   - \Delta_{\IS^2} Y_{\ell m}
    = \ell(\ell+1) Y_{\ell m}
  \end{equation*}
for all $\ell \in \N_0$, $m = -\ell, \ldots,\ell$.
Furthermore, it is shown in~\cite[Theorem~2.42]{M98} that $L^2(\IS^2;\C)$ 
has the direct sum decomposition
  \begin{equation*}
   L^2(\IS^2;\C) = \bigoplus_{\ell=0}^\infty \cH_\ell(\IS^2),
  \end{equation*}
where the spaces $(\cH_\ell, \ell \in \N_0)$ are 
spanned by spherical harmonic functions
  \begin{equation*}
   \cH_\ell(\IS^2)
    := \Span \{ Y_{\ell m}, m = -\ell,\ldots,\ell\},
  \end{equation*}
i.e., $\cH_\ell(\IS^2)$ denotes the space of eigenfunctions of~$-\Delta_{\IS^2}$ 
that correspond to the eigenvalue $\ell(\ell+1)$ for $\ell \in \N_0$.
Let us denote by $L^2(\IS^2)$ the subspace of all real-valued functions of~$L^2(\IS^2;\C)$.
Then, every real-valued function~$f$ in~$L^2(\IS^2;\C)$
admits a spherical harmonics series expansion
\begin{equation}\label{eq:KLExp}
   f = \sum_{\ell=0}^{+\infty} \sum_{m=-\ell}^\ell f_{\ell m} Y_{\ell m},
\end{equation}
and the coefficients satisfy (cp., e.g., ~\cite[Remark~3.37]{MP11})
\begin{equation*}
   f_{\ell m} =  (-1)^m \overline{f_{\ell -m}},
\end{equation*}
i.e., $f$ can be represented in $L^2(\IS^2)$ by the series expansion
\begin{equation}\label{eq:real_exp}
   f 
    = \sum_{\ell=0}^{+\infty} \Bigl( f_{\ell 0} Y_{\ell 0}
    + 2 \sum_{m=1}^\ell \left(\Re f_{\ell m} \Re Y_{\ell m} 
    - \Im f_{\ell m} \Im Y_{\ell m}\right)\Bigr)
.
\end{equation}
We shall be partly concerned with spectral approximations by truncation of the spherical harmonics expansion~\eqref{eq:KLExp}. 
To state results on convergence rates of such truncations, we 
introduce for any truncation levels $L_1< L_2\in\N_0$ the spaces
\begin{equation}\label{eq:YL}
\cH_{L_1:L_2} := \bigoplus_{\ell=L_1}^{L_2} \cH_\ell
\subset L^2(\IS^2;\C) 
\end{equation}
and identify $\cH_{L:L}:=\cH_L$ for any $L\in\N_0$.
Evidently, $\cH_{0:L}$ is a space of finite dimension that satisfies for $L\in\N$ that
\begin{equation}\label{eq:dim_Y_L}
 L^2 \le N_L := \dim(\cH_{0:L}) = (L+1)^2 \le 4 L^2,
\end{equation}
and thus, in particular, is closed. 
For $L\in\N_0$, we denote by 
$\Pi_L : L^2(\IS^2;\C) \rightarrow \cH_{0:L}$ 
the projector on~$\cH_{0:L}$ given by the 
truncated \KL series~\eqref{eq:KLExp}, 
i.e., for $f\in L^2(\IS^2;\C)$,
\begin{equation}\label{eq:PL}
\Pi_L f := \sum_{\ell=0}^L \sum_{|m|\leq \ell} f_{\ell m} Y_{\ell m}.
\end{equation}
To characterize the decay of the coefficients in the expansion~\eqref{eq:KLExp} and, 
 accordingly, also convergence rates of the projections $\Pi_L$ in~\eqref{eq:PL},
 we introduce for a smoothness index $s\in\R$ and $q\in(1,+\infty)$ 
the Sobolev spaces on 
$\IS^2$ as 
\begin{equation*}
  H^s_q(\IS^2):=(\Id - \Delta_{\IS^2})^{-s/2}L^q(\IS^2).
\end{equation*}
Then, for every $f\in H^s_q(\IS^2)$,
  \begin{equation*}
  \|f\|_{H^s_q(\IS^2)}:=\|(\Id-\Delta_{\IS^2})^{s/2}f\|_{L^q(\IS^2)}
  \end{equation*}
defines a norm on $H^s_q(\IS^2)$, where for $s<0$, the elements of $H^s_q(\IS^2)$
have to be understood as distributions (cp.\ ~\cite[Definition~4.1]{strichartz1983AnaLapcomRieman}).
The positive definiteness of this norm is implied by~\cite[Theorem~XI.2.5]{taylor1981PDO}.
For more details on these spaces, we refer the reader to~\cite{strichartz1983AnaLapcomRieman,taylor1981PDO}.
In the case $q=2$ we omit $q$ in our notation and simply write $H^s(\IS^2)$.
In this setting $H^0(\IS^2):= L^2(\IS^2)$ is identified with its dual space $H^0(\IS^2)^*$ 
and $H^s(\IS^2)^* = H^{-s}(\IS^2)$ for every $s>0$.
Since the norm on $H^s_q(\IS^2)$ is well-defined for every $s\in\R$ and every $q\in (1,+\infty)$, 
we obtain that
\begin{equation}\label{eq:inv_Id-Beltrami}
 (\Id - \Delta_{\IS^2})^{s/2}: H^t_q(\IS^2) \rightarrow H^{t-s}_q(\IS^2) 
\end{equation}
is bounded and surjective for every $t\in\R$.

Since $\cY$ diagonalizes $-\Delta_{\IS^2}$ and therefore
\begin{equation}\label{eq:SpLapEig}
 (\Id-\Delta_{\IS^2})^{s/2} Y_{\ell m}
  = (1 + \ell(\ell+1))^{s/2} Y_{\ell m}
\end{equation}
for every $Y_{\ell m} \in \cY$ by the spectral mapping theorem, 
cp.\ ~\cite[Theorem~10.33(a)]{rudin1973FA} 
applied to the bounded inverse of $(\Id-\Delta_{\IS^2})$ on $L^2(\IS^2)$,
we obtain the following approximation result 
of the operator $(\Id-\Pi_L)$.
\begin{proposition}\label{prop:FAppr}
For every $-\infty < s \leq t < +\infty$ and for every $f\in H^t(\IS^2)$,
\begin{equation*}
\| f - \Pi_L f \|_{H^s(\IS^2)} 
\leq 
L^{-(t-s)} 
\| f \|_{H^t(\IS^2)} 
\leq 
2^{t-s}  
N_L^{-(t-s)/2} 
\| f \|_{H^t(\IS^2)} 
\end{equation*}
for every $L \in \N_{0}$.
\end{proposition}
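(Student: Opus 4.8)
The plan is to exploit the fact that $\cY$ simultaneously diagonalizes both $-\Delta_{\IS^2}$ and the projectors $\Pi_L$, so that the estimate reduces to a scalar comparison between the eigenvalue sequences. First I would write $f\in H^t(\IS^2)$ via its expansion~\eqref{eq:KLExp}, $f=\sum_{\ell\ge0}\sum_{|m|\le\ell}f_{\ell m}Y_{\ell m}$, and use~\eqref{eq:SpLapEig} to express the squared norms as
\begin{equation*}
\|f\|_{H^s(\IS^2)}^2=\sum_{\ell=0}^\infty (1+\ell(\ell+1))^s\sum_{|m|\le\ell}|f_{\ell m}|^2,
\qquad
\|f-\Pi_L f\|_{H^s(\IS^2)}^2=\sum_{\ell=L+1}^\infty (1+\ell(\ell+1))^s\sum_{|m|\le\ell}|f_{\ell m}|^2,
\end{equation*}
the second identity following from the definition~\eqref{eq:PL} of $\Pi_L$, which simply drops the modes with $\ell\le L$.

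Next I would factor out, on each retained level $\ell\ge L+1$, the quantity $(1+\ell(\ell+1))^{s-t}$ and bound it by its value at $\ell=L+1$, using $s-t\le0$ and the monotonicity of $\ell\mapsto 1+\ell(\ell+1)$; this gives
\begin{equation*}
\|f-\Pi_L f\|_{H^s(\IS^2)}^2
\le (1+(L+1)(L+2))^{s-t}\sum_{\ell=L+1}^\infty (1+\ell(\ell+1))^t\sum_{|m|\le\ell}|f_{\ell m}|^2
\le (1+(L+1)(L+2))^{s-t}\|f\|_{H^t(\IS^2)}^2.
\end{equation*}
Taking square roots and using $1+(L+1)(L+2)\ge (L+1)^2\ge L^2$ (so that its negative power $(s-t)/2$ is bounded by $L^{-(t-s)}$, with the trivial case $L=0$ handled separately since then $\Pi_0 f$ keeps only the $\ell=0$ mode and the bound holds vacuously or by the same algebra) yields the first inequality. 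For the second inequality I would just substitute $L=N_L^{1/2}-1$ from~\eqref{eq:dim_Y_L}, i.e. $L=\sqrt{N_L}-1\ge \sqrt{N_L}/2$ for $L\in\N$ (equivalently $N_L\ge4$), so that $L^{-(t-s)}\le 2^{t-s}N_L^{-(t-s)/2}$.

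The only genuinely delicate point is the careful treatment of the elementary estimate $1+(L+1)(L+2)\ge L^2$ together with $\sqrt{N_L}-1\ge\sqrt{N_L}/2$ at small $L$; for $L=0$ one checks the claim directly ($\Pi_0$ retains only $\cH_0$, and $1\ge 1=L^{-(t-s)}$ is read as $0^{-(t-s)}=+\infty$, so the bound is trivial), and for $L\ge1$ the two chains of inequalities above go through verbatim. I do not expect any analytic obstacle: everything reduces to the spectral representation and monotone bounds on the eigenvalues $\ell(\ell+1)$, with the passage between the level parameter $L$ and the dimension $N_L$ governed entirely by~\eqref{eq:dim_Y_L}.
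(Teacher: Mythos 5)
Your proposal is correct and follows essentially the same route as the paper: expand in spherical harmonics, use the spectral identity~\eqref{eq:SpLapEig} to write both norms as weighted $\ell^2$ sums, bound the ratio of eigenvalue weights on the tail $\ell\geq L+1$ by its value at the cut-off to get the factor $L^{-2(t-s)}$, and then convert $L$ to $N_L$ via~\eqref{eq:dim_Y_L}. Your explicit treatment of the $L=0$ case is a small point the paper glosses over, but otherwise the two arguments are the same.
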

\begin{proof} 
Let $-\infty < s \leq t < +\infty$, and $f\in H^t(\IS^2)$. 
Then, for $L \in \N$, it holds by~\eqref{eq:KLExp} and~\eqref{eq:SpLapEig} that
\begin{align*}
 \|f-\Pi_Lf\|_{H^s(\IS^2)}^2
  &= \sum_{\ell=L+1}^{+\infty} \sum_{m=-\ell}^\ell |f_{\ell m}|^2 
		\left( 1 + \ell(\ell+1) \right)^{s}
\\
  & \le  \sum_{\ell=L+1}^{+\infty} \sum_{m=-\ell}^\ell |f_{\ell m}|^2
		\left( 1 + \ell(\ell+1) \right)^{s} \Bigl(\frac{\ell(\ell+1)}{L(L+1)}\Bigr)^{t-s}\\
  & \le L^{-2(t-s)} \sum_{\ell=L+1}^{+\infty} \sum_{m=-\ell}^\ell |f_{\ell m}|^2 
		\left( 1 + \ell(\ell+1) \right)^t
    \le  L^{-2(t-s)} \|f\|_{H^t(\IS^2)}^2.
\end{align*}
The relation 
between $L$ and $N_L$ in~\eqref{eq:dim_Y_L} implies the assertion.
\end{proof}

%
%

Let us next introduce H\"older spaces on~$\IS^2$.
For $\iota\in \N_0$, we denote by $C^\iota(\IS^2)$ the space of $\iota$-times
continuously differentiable functions taking values in $\R$ 
and, 
for $\gg\in(0,1)$, by $C^{\iota,\gg}(\IS^2)\subset C^\iota(\IS^2)$ 
the subspace of functions whose $\iota$-th derivative is H\"older continuous
with exponent $\gg$. We identify $C^{\iota,0}(\IS^2)$ with $C^{\iota}(\IS^2)$.
The H\"older spaces satisfy the Sobolev embedding that 
\begin{equation*}
 H^s_q(\IS^2)\subset C^{\iota,\gg}(\IS^2)
\end{equation*}
 is continuously embedded for $s-2/q\geq\iota+\gg$, $\gg\neq 0$, which is stated 
 for~$\IS^2$ in Theorem~\ref{thm:Sobelev_emb}.

As final functional analytical ingredient, we 
need $L^p$ spaces on the probability space with values in a 
Banach space to consider integrability of iGRFs as H\"older-space-valued random variables.
Therefore, let $(B,\|\cdot\|_B)$ denote a Banach space.
For $p\in[1,+\infty)$, the \emph{Bochner space} $L^p(\gO;B)$ consists of 
all strongly $B$-measurable functions $X:\gO\rightarrow B$ such that 
$\|X\|_B$ is in $L^p(\gO)$, i.e.,
\begin{equation*}
 \|X\|_{L^p(\gO;B)}^p := \Bigl(\E\bigl(\|X\|_B^p\bigr)\Bigr)
 < + \infty.
\end{equation*}
Then $(L^p(\gO;B),\|\cdot\|_{L^p(\gO;B)})$ is a Banach space by~\cite[Theorem~III.6.6]{dunford1958LinOpeIGenThe}.
To connect the already introduced convergence of \KL expansions of iGRF with Bochner spaces, we observe that $L^2(\gO\times\IS^2)$ and $L^2(\gO;L^2(\IS^2))$ are isometrically isomorphic, i.e., the \KL expansion also converges in $L^2(\gO;L^2(\IS^2))$ and 
$\|T\|_{L^2(\gO;L^2(\IS^2))}^2  = \sum_{\ell=0}^{+\infty} A_\ell \frac{2\ell+1}{4\pi}$ is finite.
For more details on the functional analytical setting and measurability, the reader is referred to Appendix~\ref{app:meas_func-ana}. 

Let us now return to the isotropic Gaussian random field~$T$ and assume from here on that
\begin{equation}\label{eq:AngSpecDec}
 \sum_{\ell=0}^{+\infty} A_\ell \ell^{1+\gb}<+\infty
\end{equation}
for some $\gb>0$.
It was shown in~\cite[Theorem~4.6]{LS15} that this condition yields the existence of a modification of~$T$ that is in~$C^{\iota,\gg}(\IS^2)$ for all $\iota + \gg < \gb/2$, which we consider from now on without loss of generality. The purpose of the following theorem is to show strong measurability, $L^p$-integrability, and approximation of this iGRF. We remark that the proof of the theorem just requires a continuous modification of~$T$, which exists by~\cite[Theorem~4.5]{LS15}, and therefore recovers \cite[Theorem~4.6]{LS15} from \cite[Theorem~4.5]{LS15} with a possibly different modification. This follows since $L^p$ integrability holds only if $T \in C^{\iota,\gg}(\IS^2)$ $\IP$-a.s..

\begin{theorem} 
\label{thm:LpRegIsoGRF}
Let $T$ be a continuous iGRF that satisfies~\eqref{eq:AngSpecDec} for some $\gb>0$.
Then, for every $p\in[1,+\infty)$, $\iota\in\N_0$, 
and $\gg\in(0,1)$ with $\iota+\gamma<\gb/2$, it holds that $T\in L^p(\Omega;C^{\iota,\gamma}(\IS^2))$.
Furthermore, there exists a constant $C_{p,\iota,\gg}$, which is 
independent of $(A_\ell, \ell \in \N_0)$ such that for every $L\in\N_{0}$,
\begin{equation*}
 \|T-\Pi_L T\|_{L^p(\Omega;C^{\iota,\gamma}(\IS^2))}
 \leq C_{p,\iota,\gg} \Bigl( \sum_{\ell>L}A_\ell \ell^{1+\gb}\Bigr)^{1/2}.
\end{equation*}
\end{theorem}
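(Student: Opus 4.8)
The plan is to reduce the $C^{\iota,\gamma}$ statement to a scalar Gaussian tail estimate via the Karhunen--Lo\`eve expansion and the Sobolev embedding $H^s_q(\IS^2)\subset C^{\iota,\gamma}(\IS^2)$ recorded above. First I would fix $\iota,\gamma$ with $\iota+\gamma<\gb/2$ and choose $s$ and $q$ so that the embedding applies: since $\iota+\gamma<\gb/2$ we may pick $\gamma'\in(\gamma,1)$ and $\varepsilon>0$ with $\iota+\gamma'+\varepsilon<\gb/2$, then take $q$ large (depending on $p$, $\iota$, $\gamma'$) and set $s:=\iota+\gamma'+2/q$, so that $s-2/q=\iota+\gamma'>\iota+\gamma$ gives the continuous embedding $H^s_q(\IS^2)\hookrightarrow C^{\iota,\gamma}(\IS^2)$ with embedding constant $c_{\iota,\gamma}$. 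It then suffices to bound $\|T-\Pi_L T\|_{L^p(\Omega;H^s_q(\IS^2))}$, and moreover $\|T-\Pi_L T\|_{H^s_q(\IS^2)}\le c_{q,s}\,\|T-\Pi_L T\|_{H^{s+\delta}(\IS^2)}$ for suitable $\delta>0$ by the Sobolev embedding $H^{s+\delta}(\IS^2)\hookrightarrow H^s_q(\IS^2)$ (valid once $\delta\ge 2/2-2/q=1-2/q$), so we are reduced to estimating the $L^p(\Omega;H^{\sigma}(\IS^2))$ norm of $T-\Pi_L T$ with $\sigma:=s+\delta$, and $\sigma$ can still be chosen $<1+\gb/2$... more carefully, I want $\sigma$ such that $A_\ell\ell^{1+\gb}$ controls things, i.e.\ $2\sigma\le 1+\gb$; since $\iota+\gamma<\gb/2$ there is room to make all the auxiliary choices ($\gamma'$, $\varepsilon$, $q$, $\delta$) small enough that $\sigma<(1+\gb)/2$.

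Next I would compute, for fixed $\omega$, using \eqref{eq:KLExp}, \eqref{eq:SpLapEig} and property~(4) of the coefficients,
\begin{equation*}
\|T-\Pi_L T\|_{H^{\sigma}(\IS^2)}^2 = \sum_{\ell>L}\sum_{|m|\le\ell} |a_{\ell m}|^2 (1+\ell(\ell+1))^{\sigma}.
\end{equation*}
Then I take the $L^{p/2}(\Omega)$ norm of this (for $p\ge2$; for $p<2$ use $\|\cdot\|_{L^p}\le\|\cdot\|_{L^2}$ or monotonicity together with the $p\ge 2$ bound) and apply Minkowski's inequality in $L^{p/2}(\Omega)$ to pull the sum outside:
\begin{equation*}
\bigl\|\,\|T-\Pi_L T\|_{H^{\sigma}(\IS^2)}^2\,\bigr\|_{L^{p/2}(\Omega)} \le \sum_{\ell>L}\sum_{|m|\le\ell}(1+\ell(\ell+1))^{\sigma}\,\bigl\|\,|a_{\ell m}|^2\,\bigr\|_{L^{p/2}(\Omega)}.
\end{equation*}
Each $a_{\ell m}$ is a (complex) centered Gaussian with $\E(|a_{\ell m}|^2)=A_\ell$ (for $\ell\ge1$; the $\ell=0$ term is absent once $L\ge0$ and does not affect the bound), hence $\||a_{\ell m}|^2\|_{L^{p/2}(\Omega)} = A_\ell\cdot\||g|^2\|_{L^{p/2}(\Omega)}$ for a standard complex Gaussian $g$, i.e.\ $=A_\ell\,m_{p}$ with $m_p:=\bigl(\E|g|^{p}\bigr)^{2/p}$ a finite constant depending only on $p$. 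Using $1+\ell(\ell+1)\le 3\ell^2$ for $\ell\ge1$ and $\sum_{|m|\le\ell}1=2\ell+1\le3\ell$, this yields
\begin{equation*}
\bigl\|\,\|T-\Pi_L T\|_{H^{\sigma}(\IS^2)}^2\,\bigr\|_{L^{p/2}(\Omega)} \le 3^{\sigma+1} m_p \sum_{\ell>L} A_\ell\,\ell^{2\sigma+1} \le 3^{\sigma+1} m_p \sum_{\ell>L} A_\ell\,\ell^{1+\gb},
\end{equation*}
the last step because $2\sigma+1\le 1+\gb$ by our choice of $\sigma$. Taking square roots gives $\|T-\Pi_L T\|_{L^p(\Omega;H^{\sigma}(\IS^2))}\le (3^{\sigma+1}m_p)^{1/2}\bigl(\sum_{\ell>L}A_\ell\ell^{1+\gb}\bigr)^{1/2}$, and chaining back through the embedding constants absorbs everything into a single $C_{p,\iota,\gg}$ depending only on $p,\iota,\gamma$ (crucially not on $(A_\ell)$, since all embedding/Gaussian constants are spectrum-independent). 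Applying this with $L=0$ together with \eqref{eq:AngSpecDec} gives $\|T\|_{L^p(\Omega;C^{\iota,\gamma}(\IS^2))}\le \|T-\Pi_0T\|_{L^p}+\|\Pi_0 T\|_{L^p}<\infty$ (the single-term $\Pi_0 T = a_{00}Y_{00}$ being trivially in $L^p$), establishing $T\in L^p(\Omega;C^{\iota,\gamma}(\IS^2))$; strong measurability follows since $T$ is a continuous-function-valued random variable and $C^{\iota,\gamma}(\IS^2)$ is separable, so I would cite Appendix~\ref{app:meas_func-ana} for this point.

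The main obstacle, and the point needing genuine care rather than routine bookkeeping, is the juggling of the three Sobolev embeddings (Hölder from $H^s_q$, then $H^\sigma$ into $H^s_q$, then the loss $2\sigma+1\le 1+\gb$) so that \emph{all} the slack is available simultaneously: one must verify that the strict inequality $\iota+\gamma<\gb/2$ leaves enough room to absorb the $2/q$ loss from passing to $L^q$, the $\varepsilon$ buffer in the Hölder exponent, and the $+1$ in the angular-spectrum weight, i.e.\ that we can choose $q<\infty$ (finite but possibly large, depending on $p$ because the embedding $H^{s+\delta}(\IS^2)\hookrightarrow H^s_q(\IS^2)$ and ultimately the Gaussian moment $m_p$ force $q$ to grow with $p$) while still keeping $\sigma<(1+\gb)/2$. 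Once the parameters are pinned down, the only nontrivial analytic input is the elementary fact that all moments of a Gaussian are finite and controlled by the second moment, which is what makes the constant independent of the spectrum.
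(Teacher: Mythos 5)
There is a genuine gap, and it sits exactly at the point you yourself flag as ``the main obstacle'': the parameter bookkeeping does not close. Your chain is $H^{\sigma}(\IS^2)=H^{\sigma}_2(\IS^2)\hookrightarrow H^s_q(\IS^2)\hookrightarrow C^{\iota,\gamma}(\IS^2)$ with $\sigma=s+\delta$, $\delta\geq 1-2/q$ (correct, and sharp, for the $L^2$-to-$L^q$ Sobolev embedding in dimension $2$) and $s\geq \iota+\gamma'+2/q$ (for the H\"older embedding). Adding these, $\sigma\geq \iota+\gamma'+2/q+1-2/q=\iota+\gamma'+1$: the two losses trade off exactly and their sum is $1$ \emph{independently of $q$}, so no choice of $q$, $\gamma'$, $\varepsilon$, $\delta$ makes $\sigma$ small. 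Since your Minkowski-plus-Gaussian-moment computation in $H^\sigma_2$ forces $2\sigma\leq\gb$, i.e.\ $\sigma\leq\gb/2$, your argument only works under the hypothesis $\iota+\gamma+1<\gb/2$, which is strictly stronger than the stated $\iota+\gamma<\gb/2$. (There is also a small internal inconsistency between ``$2\sigma\leq 1+\gb$'' and the later, correct, requirement ``$2\sigma+1\leq 1+\gb$'', but that is cosmetic next to the derivative loss.)

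The way the paper avoids this is the one ingredient your reduction discards: it never passes through an $L^2$-based Sobolev space. It estimates $\|T^{L_1}-T^{L_2}\|_{L^{2p'}(\Omega;H^{\gb/2}_{2p'}(\IS^2))}$ \emph{directly}, writing the $H^{\gb/2}_{2p'}$-norm as an $L^{2p'}(\IS^2)$ integral of $(\Id-\Delta_{\IS^2})^{\gb/4}(T^{L_1}-T^{L_2})$, applying Fubini, and observing that at each fixed $x\in\IS^2$ this quantity is a scalar centered Gaussian whose variance is computable by the addition theorem $\sum_{|m|\leq\ell}|Y_{\ell m}(x)|^2=(2\ell+1)/(4\pi)$, namely $\sum_\ell A_\ell\frac{2\ell+1}{4\pi}(1+\ell(\ell+1))^{\gb/2}\sim\sum_\ell A_\ell\,\ell^{1+\gb}$. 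The $2p'$-th moment is then an explicit multiple of this variance to the power $p'$, so one controls the $L^q$-based norm at smoothness level $\gb/2$ with no extra derivative cost, and the H\"older embedding only needs $\gb/2-2/(2p')\geq\iota+\gg$, which \emph{is} achievable from $\iota+\gamma<\gb/2$ by taking $p'$ large. Your coefficientwise Minkowski argument is fine as far as it goes, but it is intrinsically tied to the $\ell^2$ structure of $H^\sigma_2$ and cannot reach the $L^q$-based spaces without the full one-derivative penalty; the pointwise Gaussian computation is the step you need to restore.
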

\begin{proof}
 It suffices to prove the theorem for $p$ even, i.e., 
 for $p = 2p'$ and $p' \in \N$. 
 The result for all remaining $p \in [1, + \infty)$ follows then by H\"older's inequality.
 We set $T^L:=\Pi_L T$
 and show first that $(T^L,L\in\N_{0})$ is a Cauchy sequence in 
 $L^p(\Omega;C^{\iota,\gamma}(\IS^2))$.
 The smoothness of the spherical harmonics implies with Pettis' theorem
 (see Theorem~\ref{thm:Pettis}) that $T^L$ is strongly measurable in every function 
 space that contains $C^\infty(\IS^2)$, $L\in\N_{0}$.
 In particular, $T^L$ is strongly $B$-measurable, 
$B\in\{C^{\iota,\gg}(\IS^2),H^{\gb/2}_q(\IS^2),q\in(1,+\infty)\}$ for every $L\in\N_{0}$.
 With the identity $\sum_{|m| \leq \ell} \left| Y_{\ell m}(x) \right|^2 = (2\ell+1)/(4\pi)$ (cp.\ ~\cite[Theorem~2.4.5]{Ned2001})
 and the \KL expansion, 
 we observe that $\sum_{m=-\ell}^\ell a_{\ell m} Y_{\ell m}(x)$ 
 is $\cN(0,(2\ell+1)/(4\pi) \, A_\ell)$-distributed for every $\ell \in \N$ 
 and $x \in \IS^2$ as well as that $(\sum_{m=-\ell}^\ell a_{\ell m} Y_{\ell m}(x) ,\ell\in\N_0)$ 
is a sequence of independent random variables for every fixed $x \in \IS^2$.
 Hence, for $L_1 > L_2\in\N_{0}$, we obtain that
 \begin{align*}
  \|T^{L_1}-T^{L_2}\|_{L^{2p'}(\Omega;H^{\gb/2}_{2p'}(\IS^2))}^{2p'}
  &=\int_{\IS^2}\E\Bigl(
	\bigl(\sum_{\ell=L_2+1}^{L_1}\sum_{m=-\ell}^\ell 
	    a_{\ell m} (1+\ell(\ell+1))^{\gb/4}Y_{\ell m}
	\bigr)^{2p'}\Bigr)\dd\sigma
   \\
  &= \frac{(2p')!}{2^{p'}p'!}  |\IS^2| \Bigl(\sum_{\ell=L_2+1}^{L_1} 
               A_\ell \frac{2\ell+1}{4\pi}(1+\ell(\ell+1))^{\gb/2}
    \Bigr)^{p'}
  < + \infty,
 \end{align*}
  where we applied Fubini's theorem and the fact that moments of centered Gaussian random variables satisfy
 $\E(X^{2p'}) =(2p')!/(2^{p'}p'!) \E(X^2)^{p'}$. Finiteness follows since~\eqref{eq:AngSpecDec} holds.
 
 This implies especially with the Sobolev embedding (cp.\ Theorem~\ref{thm:Sobelev_emb}) that there exists a constant~$C$ such that
 \begin{equation*}
  \|T^{L_1}-T^{L_2}\|_{L^{2p'}(\Omega;C^{\iota,\gg}(\IS^2))}
  \leq C \Bigl(4\pi \frac{(2p')!}{2^{p'}p'!} \Bigr)^{1/(2p')}
      \Bigl(\sum_{\ell=L_2+1}^{L_1} A_\ell \frac{2\ell+1}{4\pi}(1+\ell(\ell+1))^{\gb/2} \Bigr)^{1/2}
 \end{equation*}
 for $\gb/2 - 1/p'\geq \iota+\gg$ and therefore that $(T^L,L\in\N_0)$ is a Cauchy sequence in $L^{2p'}(\Omega;C^{\iota,\gg}(\IS^2))$ 
 that converges due to completeness. Furthermore, the result extends by H\"older's inequality to $L^{p}(\Omega;C^{\iota,\gg}(\IS^2))$ 
 for every $p \le 2p'$. 
Since $L^p$ limits are $\IP$-almost surely unique 
and we know by the properties of the \KL expansion 
that $(T^L,L\in\N_0)$ converges to~$T$ 
in~$L^2(\Omega;L^2(\IS^2))$, $T \in L^{2p'}(\Omega;C^{\iota,\gg}(\IS^2))$ 
holds also 
due to the assumed continuity.
 
For given $p \ge 1$, we choose $p' \in \N$ such that 
$p\le 2p'$ and $\gb/2 - 1/p'\geq \iota+\gg$ for fixed $\iota$ and~$\gg$.
This implies that there exists a constant~$C_{p'}$, i.e., $C_{p,\iota,\gg}$, such that
\begin{equation*}
 \|T^{L_1}-T^{L_2}\|_{L^{p}(\Omega;C^{\iota,\gg}(\IS^2))}
 \leq C_{p,\iota,\gg} \Bigl(\sum_{\ell=L_2+1}^{L_1}A_\ell \ell^{1+\gb}\Bigr)^{1/2}
.
\end{equation*}
We obtain the claim by taking the limit $L_1 \to + \infty$.
\end{proof}
Let us continue with the properties of the corresponding isotropic lognormal random fields $a:=\exp(T)$ given by $a(x):=\exp(T(x))$ for every $x \in \IS^2$. These will be of interest as diffusion coefficients of the elliptic operators in our considered SPDEs.
For the approximation of these lognormal random fields, we set similarly $a^L:=\exp(\Pi_L T)$ for every $L\in\N_0$.
Then, the properties of~$T$ and~$T^L$ shown in Theorem~\ref{thm:LpRegIsoGRF} imply similar results for~$a$ and~$a^L$, which are stated in the following theorem.
\begin{theorem}\label{thm:logNiGRFsProp}
Let $a=\exp(T)$ be an isotropic lognormal RF such that $T$ is a continuous iGRF 
and satisfies~\eqref{eq:AngSpecDec} for some $\gb>0$.
Then, for every $p\in[1,+\infty)$, $\iota\in\N_0$, and for $\gg\in(0,1)$ 
satisfying $\iota+\gamma<\gb/2$, 
and for every $L\in\N_0$, it holds that $a, a^L\in L^p(\Omega;C^{\iota,\gamma}(\IS^2))$,
where the $L^p(\Omega;C^{\iota,\gamma}(\IS^2))$-norm of $a^L$ can be bounded independently of $L$
and the same stays true for $a, a^L\in L^p(\Omega;C^0(\IS^2))$.
Furthermore, for every $\varepsilon\in(0,\gb)$, 
there exists a constant $C_{p,\varepsilon}$ such that for every $L\in\N_0$, it holds that
\begin{equation*}
 \|a-a^L\|_{L^p(\Omega;C^{0}(\IS^2))}
 \leq C_{p,\varepsilon} \Bigl( \sum_{\ell>L}A_\ell \ell^{1+\varepsilon}\Bigr)^{\frac{1}{2}}.
\end{equation*}
\end{theorem}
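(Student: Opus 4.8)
The plan is to transfer the regularity and approximation results for $T$ from Theorem~\ref{thm:LpRegIsoGRF} to $a = \exp(T)$ by exploiting the fact that the exponential map is locally Lipschitz, combined with moment bounds coming from Fernique's theorem. First I would establish that $a, a^L \in L^p(\Omega;C^{\iota,\gg}(\IS^2))$. Fix $\iota$ and $\gg$ with $\iota+\gg<\gb/2$, and pick $\gg'\in(0,1)$ with $\iota+\gg<\iota+\gg'<\gb/2$, so that by Theorem~\ref{thm:LpRegIsoGRF} we have $T, T^L\in L^r(\Omega;C^{\iota,\gg'}(\IS^2))$ for every $r\in[1,\infty)$, with the $L^r(\Omega;C^{\iota,\gg'}(\IS^2))$-norm of $T^L$ bounded uniformly in $L$ (this uniformity follows from the estimate in Theorem~\ref{thm:LpRegIsoGRF} applied with $L=0$, or directly from the Cauchy-sequence estimate in its proof). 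For a $C^{\iota,\gg'}$ function $g$, the chain and Leibniz rules give that the $C^{\iota,\gg}$-norm of $\exp(g)$ is bounded by a polynomial in $\|g\|_{C^{\iota,\gg'}}$ times $\exp(\|g\|_{C^0})$; more precisely one gets a bound of the form $P(\|g\|_{C^{\iota,\gg'}})\exp(\|g\|_{C^0(\IS^2)})$ for a polynomial $P$ of degree $\iota$ depending only on $\iota$. Raising to the $p$-th power, taking expectations, and applying the Cauchy--Schwarz inequality to split the polynomial factor from the exponential factor, the claim $a^L\in L^p(\Omega;C^{\iota,\gg}(\IS^2))$ with $L$-uniform bound reduces to two facts: (i) all polynomial moments $\E(\|T^L\|_{C^{\iota,\gg'}(\IS^2)}^{r})$ are finite and bounded uniformly in $L$ (from Theorem~\ref{thm:LpRegIsoGRF}), and (ii) $\E(\exp(q\|T^L\|_{C^0(\IS^2)}))<+\infty$ uniformly in $L$ for every $q>0$. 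The same argument with $T$ in place of $T^L$ gives $a\in L^p(\Omega;C^{\iota,\gg}(\IS^2))$, and specializing to $\iota=\gg=0$ (no polynomial factor) gives the $C^0$ statements.

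The key new ingredient is therefore the uniform-in-$L$ exponential integrability (ii). I would obtain this from Fernique's theorem: $T$ is a centered Gaussian random variable taking values in the separable Banach space $C^0(\IS^2)$ (centered up to the deterministic mean $\E(T)$, which is a constant function and only shifts $\|T\|_{C^0}$ by a fixed amount), so $\E(\exp(q\|T\|_{C^0(\IS^2)}^2))<+\infty$ for $q$ small, and in particular $\E(\exp(q\|T\|_{C^0(\IS^2)}))<+\infty$ for every $q>0$ (since a linear exponential is dominated by a shifted Gaussian exponential). For the truncations $T^L=\Pi_L T$: since $\Pi_L$ is an orthogonal projection and the $T^L$ are again centered Gaussian in $C^0(\IS^2)$, I would argue that $\|T^L\|_{C^0}$ is stochastically dominated in a suitable sense, or more robustly, note that $\|T^L\|_{C^0(\IS^2)}\le \|T\|_{C^0(\IS^2)} + \|T - T^L\|_{C^0(\IS^2)}$ and that $\|T-T^L\|_{L^r(\Omega;C^0(\IS^2))}\to 0$, so the Gaussian random variables $\|T^L\|_{C^0}$ have uniformly bounded variance proxy; Fernique's theorem in the form giving a uniform exponential moment bound from a uniform bound on the median (or on a low-order moment) then yields $\sup_L \E(\exp(q\|T^L\|_{C^0(\IS^2)}))<+\infty$ for every $q>0$. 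This step is the main obstacle: one must be careful that Fernique's constant is controlled uniformly across the family $\{T^L\}$, which I would handle by the standard fact that $\|T^L\|_{C^0(\IS^2)} \leq \|T\|_{C^0(\IS^2)}$ fails in general, so instead the cleanest route is to bound $\E(\exp(q\|T^L\|_{C^0}))$ using the above triangle inequality together with the Cauchy--Schwarz inequality, reducing to a uniform exponential moment of $\|T - T^L\|_{C^0}$, which in turn follows from Fernique applied to the centered Gaussian $T-T^L$ whose $L^2(\Omega;C^0(\IS^2))$-norm is uniformly bounded (indeed $\to 0$) by Theorem~\ref{thm:LpRegIsoGRF} with $\iota=\gg=0$.

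Finally, for the approximation estimate in $L^p(\Omega;C^0(\IS^2))$: I would write the pointwise bound
\begin{equation*}
 |a(x) - a^L(x)| = |\exp(T(x)) - \exp(T^L(x))| \le |T(x) - T^L(x)|\,\bigl(\exp(\|T\|_{C^0(\IS^2)}) + \exp(\|T^L\|_{C^0(\IS^2)})\bigr),
\end{equation*}
using the mean value theorem for $\exp$ and bounding the intermediate value by the larger of $T(x), T^L(x)$, hence by $\max(\|T\|_{C^0},\|T^L\|_{C^0})$. Taking the supremum over $x\in\IS^2$, raising to the $p$-th power, taking expectations, and applying the Cauchy--Schwarz inequality gives
\begin{equation*}
 \|a - a^L\|_{L^p(\Omega;C^0(\IS^2))} \le \|T - T^L\|_{L^{2p}(\Omega;C^0(\IS^2))}\cdot \bigl\| \exp(\|T\|_{C^0}) + \exp(\|T^L\|_{C^0}) \bigr\|_{L^{2p}(\Omega)}.
\end{equation*}
The second factor is bounded uniformly in $L$ by the exponential integrability established above. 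For the first factor, I apply Theorem~\ref{thm:LpRegIsoGRF} with $\iota=\gg=0$ and with $\gb$ replaced by any $\varepsilon\in(0,\gb)$ (the hypothesis $\sum_\ell A_\ell \ell^{1+\varepsilon}<+\infty$ holds a fortiori), noting that $\iota+\gg=0<\varepsilon/2$, to get
\begin{equation*}
 \|T - T^L\|_{L^{2p}(\Omega;C^0(\IS^2))} \le C_{2p,0,0}\Bigl(\sum_{\ell>L}A_\ell\ell^{1+\varepsilon}\Bigr)^{1/2}.
\end{equation*}
Combining the two and absorbing the uniform constant from the exponential factor into $C_{p,\varepsilon}$ yields the claimed bound. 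The only subtlety to flag is that the constant $C_{p,\varepsilon}$ does depend on the angular power spectrum through the uniform exponential moment bound (i.e.\ through the variance of $T$ in $C^0(\IS^2)$), unlike the constant in Theorem~\ref{thm:LpRegIsoGRF}; the statement as written only claims existence of \emph{some} such constant, so this is consistent.
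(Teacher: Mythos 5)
Your proposal is correct and follows essentially the same route as the paper: the H\"older-norm bound $\|\exp(v)\|_{C^{\iota,\gg}}\leq C\|\exp(v)\|_{C^0}(1+\|v\|_{C^{\iota,\gg}}^{\iota+1})$ (Lemma~\ref{lemma:Hoelder_prod_comp_est}), a Cauchy--Schwarz split into a Fernique-type exponential moment and the polynomial moments from Theorem~\ref{thm:LpRegIsoGRF}, and the elementary bound $|\exp(t)-\exp(s)|\leq(\exp(t)+\exp(s))|t-s|$ for the error estimate. The only cosmetic difference is that you obtain the $L$-uniform exponential moment of $\|\Pi_L T\|_{C^0}$ via the triangle inequality through $T-\Pi_L T$, whereas the paper's Proposition~\ref{prop:iLogRF_C^0bound} runs the Fernique argument directly for $\Pi_L T$ with constants chosen from a uniform second-moment bound; both are valid.
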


\begin{proof}
We observe first that the composition with the exponential function is a continuous mapping from $C^{\iota,\gg}(\IS^2)$
into itself and $T$ is strongly $C^{\iota,\gg}(\IS^2)$-measurable by Theorem~\ref{thm:LpRegIsoGRF}.
Then, the inequality
\begin{equation*}
 \|\exp( v)\|_{C^{\iota,\gg}(\IS^2)}\leq C_{\iota,\gg} \|\exp(v)\|_{C^{0}(\IS^2)}\bigl(1+\|v\|_{C^{\iota,\gg}(\IS^2)}^{\iota+1}\bigr),
\end{equation*}
which follows in a similar way as the proof of~\cite[Theorem~A.8]{Hoermander1976} and which is provenin Lemma~\ref{lemma:Hoelder_prod_comp_est},
implies strong $C^{\iota,\gg}(\IS^2)$-measurability of $a=\exp(T)$ and of $a^L=\exp(\Pi_L T)$ 
for every $L\in\N_0$.
The Cauchy--Schwarz inequality then implies
that there exists a constant $C$ 
that does not depend on $T$
such that 
\begin{equation*}
\|a\|_{L^p(\gO;C^{\iota,\gg}(\IS^2))}=\|\exp(T)\|_{L^p(\gO;C^{\iota,\gg}(\IS^2))}
\leq C \|\exp(T)\|_{L^{2p}(\gO;C^{0}(\IS^2))}(1+\|T\|_{L^{2p(\iota+1)}(\gO;C^{\iota,\gg}(\IS^2))}^{\iota+1}).
\end{equation*}
The second term in the product is bounded by Theorem~\ref{thm:LpRegIsoGRF}, while the boundedness of the first one is a consequence of Fernique's theorem, which is proven in a similar way as \cite[Proposition~3.10]{Charrier_SINUM12} and can be found for iGRFs on~$\IS^2$ in Proposition~\ref{prop:iLogRF_C^0bound}.

The second assertion about $a^L$ is proven completely analogously and the 
$L^p(\Omega;C^{\iota,\gamma}(\IS^2))$-norm of $a^L$ 
can be bounded independently of $L$ due to 
Theorem~\ref{thm:LpRegIsoGRF} and the independence of~$L$ in the $L^p(\Omega;C^0(\IS^2))$-norm, which is also part of Proposition~\ref{prop:iLogRF_C^0bound}.

For the proof of the third claim, note that the fundamental theorem of calculus implies 
for arbitrary $t,s\in\R$ that $|\exp(t)-\exp(s)|\leq (\exp(t)+\exp(s))|t-s|$,
which yields with 
the Cauchy--Schwarz inequality that
\begin{equation*}
 \|a-a^L\|_{L^p(\gO;C^0(\IS^2))}\leq \bigl(\|a\|_{L^{2p}(\gO;C^0(\IS^2))} + \|a^L\|_{L^{2p}(\gO;C^0(\IS^2))}\bigr) 
 \|T-\Pi_L T\|_{L^{2p}(\gO;C^0(\IS^2))}.
\end{equation*}
Therefore, the third assertion follows with Theorem~\ref{thm:LpRegIsoGRF}.
 \end{proof}

In the following and especially in the analysis of~\eqref{eq:ellSPDE}, the properties of the minimum and the maximum of a random field are of major interest. Therefore,
we define for $a=\exp(T)$, where $T$ is a continuous iGRF~$T$, the random variables
\begin{equation*}
 \hat{a}:=\max_{x\in\IS^2}a(x)
 \quad \text{and}\quad 
 \quad\check{a} :=\min_{x\in\IS^2}a(x),
 \end{equation*}
 and similarly for $L \in \N_0$
 \begin{equation*}
 \hat{a}^L:=\max_{x\in\IS^2}a^L(x)
 \quad \text{and}\quad 
 \check{a}^L :=\min_{x\in\IS^2}a^L(x).
\end{equation*}
Here we recall that $a^L=\exp(\Pi_L T)$.
Since
 \begin{equation*}
 \|\check{a}^{-1}\|_{L^p(\gO)}
  =\|(\min_{x\in\IS^2}a(x))^{-1}\|_{L^p(\gO)}
   =\|\max_{x\in\IS^2}\; \exp(-T(x))\|_{L^p(\gO)}
   =\| \exp(- T) \|_{L^p(\gO;C^0(\IS^2))}
 \end{equation*}
 and
 \begin{equation*}
  \|\hat{a}\|_{L^p(\gO)}
    = \|\exp(T)\|_{L^p(\gO;C^0(\IS^2))},
 \end{equation*}
these are elements of $L^p(\gO)$, $p\in[1,+\infty)$ by Theorem~\ref{thm:logNiGRFsProp}, which is summarized in the following corollary.
\begin{corollary}\label{cor:apint}
Let $T$ be a continuous iGRF,
then $\hat{a}$, $\check{a}^{-1}$, $\hat{a}^L$, and $(\check{a}^L)^{-1}$ are in $L^p(\gO)$ 
for every $p\in[1,+\infty)$ and every $L\in\N_0$, where the $L^p(\gO)$-norm
of $\hat{a}^L$ and $(\check{a}^L)^{-1}$ can be bounded independently of $L$.
\end{corollary}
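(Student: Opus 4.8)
The plan is to reduce Corollary~\ref{cor:apint} to Theorem~\ref{thm:logNiGRFsProp} via the elementary identities displayed immediately before the corollary statement, observing that $\hat a$, $\check a^{-1}$, $\hat a^L$, $(\check a^L)^{-1}$ are all of the form $\|\exp(\pm T)\|_{C^0(\IS^2)}$ or $\|\exp(\pm\Pi_L T)\|_{C^0(\IS^2)}$ evaluated pointwise in $\omega$, hence their $L^p(\gO)$-norms coincide with $L^p(\gO;C^0(\IS^2))$-norms of lognormal fields to which the theorem applies directly.

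First I would record that $\check a = \min_{x\in\IS^2}\exp(T(x)) = (\max_{x\in\IS^2}\exp(-T(x)))^{-1}$, so $\check a^{-1} = \max_{x}\exp(-T(x)) = \|\exp(-T)\|_{C^0(\IS^2)}$ pointwise on $\gO$; taking $L^p(\gO)$-norms gives $\|\check a^{-1}\|_{L^p(\gO)} = \|\exp(-T)\|_{L^p(\gO;C^0(\IS^2))}$. Since $-T$ is again a continuous iGRF with the same angular power spectrum $(A_\ell)$ (negation does not affect~\eqref{eq:AngSpecDec}), Theorem~\ref{thm:logNiGRFsProp} applied to $\exp(-T)$ yields finiteness for every $p\in[1,+\infty)$. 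Likewise $\hat a = \|\exp(T)\|_{C^0(\IS^2)}$ pointwise, so $\|\hat a\|_{L^p(\gO)} = \|\exp(T)\|_{L^p(\gO;C^0(\IS^2))} < +\infty$ by the same theorem. The strong measurability needed to make these Bochner norms meaningful is already supplied by Theorem~\ref{thm:logNiGRFsProp} (and Theorem~\ref{thm:LpRegIsoGRF}), and the maps $v\mapsto\max_x v(x)$ and $v\mapsto\min_x v(x)$ are continuous on $C^0(\IS^2)$, so $\hat a$ etc.\ are genuine real random variables.

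Second, for the truncated fields I would apply the identical argument with $T$ replaced by $\Pi_L T$: $\Pi_L T$ is a continuous (indeed smooth) iGRF whose angular power spectrum is $(A_\ell)$ for $\ell\le L$ and $0$ for $\ell>L$, so the hypothesis~\eqref{eq:AngSpecDec} holds uniformly in $L$ with the same constant $\sum_\ell A_\ell\ell^{1+\gb}$. Thus $\|\hat a^L\|_{L^p(\gO)} = \|\exp(\Pi_L T)\|_{L^p(\gO;C^0(\IS^2))}$ and $\|(\check a^L)^{-1}\|_{L^p(\gO)} = \|\exp(-\Pi_L T)\|_{L^p(\gO;C^0(\IS^2))}$, and the uniform-in-$L$ bound is precisely the assertion in Theorem~\ref{thm:logNiGRFsProp} that the $L^p(\gO;C^0(\IS^2))$-norm of $a^L$ can be bounded independently of $L$ (the same holding for $-T$ in place of $T$). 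This completes all four cases.

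The only genuine subtlety — and the step I would flag as the main obstacle — is checking that $-\Pi_L T$ falls under the scope of Theorem~\ref{thm:logNiGRFsProp}, i.e.\ that the theorem's uniform-in-$L$ statement is really symmetric under $T\mapsto -T$. This is immediate from the structure of the proof of Theorem~\ref{thm:LpRegIsoGRF} (the Gaussian-moment computation there depends on $A_\ell$ only through $\E((\sum_m a_{\ell m}Y_{\ell m}(x))^2) = (2\ell+1)A_\ell/(4\pi)$, which is unchanged by sign flip) together with Fernique's theorem (Proposition~\ref{prop:iLogRF_C^0bound}), which is likewise insensitive to the sign of $T$. Granting this, the corollary follows with no further estimates.
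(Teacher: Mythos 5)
Your proposal is correct and follows essentially the same route as the paper: the text immediately preceding the corollary records exactly the identities $\check a^{-1}=\|\exp(-T)\|_{C^0(\IS^2)}$ and $\hat a=\|\exp(T)\|_{C^0(\IS^2)}$ (and their truncated analogues) and then invokes Theorem~\ref{thm:logNiGRFsProp}, applied to $T$ and to $-T$, for finiteness and for the uniform-in-$L$ bounds. Your extra remarks on the continuity of $v\mapsto\max_x v(x)$ and on the sign-symmetry of the Gaussian-moment and Fernique arguments are sound but not needed beyond what the paper already supplies.
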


\section{Existence, uniqueness, and regularity of solutions}
\label{sec:ExUniqRegSol}

Having introduced the analytic and approximation properties of the random source of interest, we are now in state to come back to the SPDE of interest
\begin{equation}
 -\nabla_{\IS^2}\cdot(a\nabla_{\IS^2}u)=f,
 \tag{\ref{eq:ellSPDE}}
\end{equation}
where $a=\exp(T)$ is an isotropic lognormal random field such that the iGRF~$T$ 
is continuous and satisfies~\eqref{eq:AngSpecDec} for some 
$\gb>0$ and $f$ is a deterministic source term which has at least $H^{-1}(\IS^2)$ regularity.

In what follows we first introduce the variational framework in which we consider solutions before we show existence, uniqueness, and regularity of solutions where the latter depends on the regularity of $a$ and $f$. We derive similar results for the SPDEs corresponding to the approximate random fields $a^L$.

We observe first that solutions of the SPDE on the closed, compact submanifold~$\IS^2$ of~$\R^3$ without boundary may exhibit nonuniqueness since $-\nabla_{\IS^2}\cdot(a\nabla_{\IS^2})$ might have a nontrivial kernel, 
i.e., a constant~$u$ is a solution of the homogeneous equation.

Therefore, we shall work in 
factor spaces of function spaces which are orthogonal 
(in $L^2(\IS^2)$) to constants.
The closed subspace of $H^1(\IS^2)$ that consists of all $v\in H^1(\IS^2)$ 
whose inner product with $1$ satisfies $(v,1)=0$
is denoted by $H^1(\IS^2)/\R$. 
For every $v\in H^1(\IS^2)/\R$, 
\begin{equation*}
 \|v\|_{H^1(\IS^2)/\R}:=\|\nabla_{\IS^2}v\|_{L^2(\IS^2)}
\end{equation*}
defines a norm on $H^1(\IS^2)/\R$
due to the second Poincar\'{e} inequality
\begin{equation*}
 \|v\|_{L^2(\IS^2)}\leq \frac{1}{\sqrt{2}}\|\nabla_{\IS^2}v\|_{L^2(\IS^2)},
\end{equation*}
which is proven considering the Reyleigh quotient and the spectrum of~$-\Delta_{\IS^2}$ (for details, see \cite[Lemma~8.3]{Herrmann13}).
Since $H^1(\IS^2)/\R$ is a closed linear subspace of $H^1(\IS^2)$ and the norm $\|\cdot\|_{H^1(\IS^2)/\R}$ 
is induced by the inner product $(\nabla_{\IS^2}\cdot,\nabla_{\IS^2}\cdot)$, $H^1(\IS^2)/\R$ is a Hilbert space.

Let us consider the variational formulation of~\eqref{eq:ellSPDE} in $H^1(\IS^2)/\R$ 
with right hand side $f\in H^{-1}(\IS^2)$ such that $f(1)=0$:
find a strongly $H^1(\IS^2)/\R$-measurable mapping $u$ such that
\begin{equation}\label{eq:ellSPDEweak}
(a \nabla_{\IS^2} u, \nabla_{\IS^2} v)  
= 
f(v) \quad \forall v\in H^1(\IS^2)/\R.
\end{equation}
Moreover, we want to show that this mapping $u:\gO\rightarrow H^1(\IS^2)/\R$ is $L^p$-integrable.
To this end, let us fix this right hand side $f$.
In what follows let us first recall the deterministic existence and uniqueness theory and derive the results in such a form that they are suitable for the stochastic framework. These will then be applied to~\eqref{eq:ellSPDEweak}.
Therefore, let
\begin{equation*}
C^0_+(\IS^2) := \{\tilde{a} \in C^0(\IS^2), \min\limits_{x\in\IS^2}\tilde{a}(x)>0\}. 
\end{equation*}
and consider the corresponding deterministic variational problem for $\tilde{a}\in C^0_+(\IS^2)$ with right hand side $f\in H^{-1}(\IS^2)$ such that $f(1)=0$: 
find $u \in H^1(\IS^2)/\R$ such that
\begin{equation}\label{eq:ellPDEweak}
(\tilde{a} \nabla_{\IS^2} \tilde{u}, \nabla_{\IS^2} v)  
= 
f(v) \quad \forall v\in H^1(\IS^2)/\R.
\end{equation}
Since the bilinear form
$ (\tilde{a} \nabla_{\IS^2} \cdot, \nabla_{\IS^2} \cdot)  $ 
is continuous and coercive on the space $H^1(\IS^2)/\R\times H^1(\IS^2)/\R$, i.e.,
\begin{equation}\label{eq:bilinearform_continuous}
(\tilde{a} \nabla_{\IS^2} v, \nabla_{\IS^2} w)
  \leq \|\tilde{a}\|_{C^0(\IS^2)}\|v\|_{H^1(\IS^2)/\R} \|w\|_{H^1(\IS^2)/\R} 
  \quad \forall v,w\in H^1(\IS^2)/\R
\end{equation}
and
\begin{equation}\label{eq:bilinearform_coercive}
\|v\|^2_{H^1(\IS^2)/\R} 
\leq \frac{1}{\min_{x\in\IS^2}\tilde{a}(x)} (\tilde{a} \nabla_{\IS^2} v, \nabla_{\IS^2} v) \quad \forall v\in H^1(\IS^2)/\R,
\end{equation}
existence and uniqueness of a solution $\tilde{u} \in H^1(\IS^2)/\R$ to~\eqref{eq:ellPDEweak} as well as the estimate
\begin{equation}\label{est:Pth_H1}
\|\tilde{u}\|_{H^1(\IS^2)/\R}
\leq 
\frac{1}{\min_{x\in\IS^2}\tilde{a}(x)}\sqrt{\frac{3}{2}}\|f\|_{H^{-1}(\IS^2)}
,
\end{equation}
are implied by the Lax--Milgram lemma, where we used that
\begin{equation*}
 \sup_{0\neq v\in H^1(\IS^2)/\R} |f(v)|/\|v\|_{H^1(\IS^2)/\R} \leq \sqrt{\frac{3}{2}} \|f\|_{H^{-1}(\IS^2)}.
\end{equation*}


The difference of two solutions with respect to different coefficients~$\tilde{a}$ and the same right hand side~$f$ 
can be estimated with a 
version of Strang's second lemma. 
This is made precise in the following lemma, where 
the variational formulation~\eqref{eq:ellPDEweak} 
is also considered with respect to subspaces
of~$H^1(\IS^2)/\R$ to be suitable for approximations in Section~\ref{sec:Discr}. The proof for $H^1(\IS^2)/\R$ can be found in~\cite[Proposition~8.6]{Herrmann13} (with a different norm on $f$) which also applies for proper, closed subspaces of~$H^1(\IS^2)/\R$.
\begin{lemma}\label{lemma:Strang}
Let $V\subset H^1(\IS^2)/\R$ be a closed, 
not necessarily strict subspace of $H^1(\IS^2)/\R$ endowed with the $H^1(\IS^2)/\R$-norm.
For $\tilde{a}_1,\tilde{a}_2\in C^0_+(\IS^2)$,
let $\tilde{u}_1,\tilde{u}_2 \in V$ satisfy 
\begin{equation*}
  (\tilde{a}_i \nabla_{\IS^2} \tilde{u}_i, \nabla_{\IS^2} v)  
= 
f(v)
\quad \forall v\in V
\end{equation*}
for $i=1,2$. Then,
 \begin{equation*}
 \|\tilde{u}_1 - \tilde{u}_2\|_{H^1(\IS^2)/\R}
 \leq 
\sqrt{\frac{3}{2}}\frac{\|f\|_{H^{-1}(\IS^2)}}{(\min_{x\in\IS^2}\tilde{a}_1(x))(\min_{x\in\IS^2}\tilde{a}_2(x))}
 \|\tilde{a}_1 - \tilde{a}_2\|_{C^0(\IS^2)}
 .
 \end{equation*}
\end{lemma}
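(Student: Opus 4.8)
The plan is to subtract the two variational identities and test with a well-chosen element of $V$. Specifically, writing $e := \tilde{u}_1 - \tilde{u}_2 \in V$, the identities $(\tilde{a}_1 \nabla_{\IS^2} \tilde{u}_1, \nabla_{\IS^2} v) = f(v)$ and $(\tilde{a}_2 \nabla_{\IS^2} \tilde{u}_2, \nabla_{\IS^2} v) = f(v)$ for all $v \in V$ subtract to give
\begin{equation*}
(\tilde{a}_1 \nabla_{\IS^2} e, \nabla_{\IS^2} v) = ((\tilde{a}_2 - \tilde{a}_1) \nabla_{\IS^2} \tilde{u}_2, \nabla_{\IS^2} v) \quad \forall v \in V.
\end{equation*}
Choosing $v = e$ and invoking coercivity~\eqref{eq:bilinearform_coercive} for $\tilde{a}_1$ on the left, then Cauchy--Schwarz and $\|\tilde{a}_2 - \tilde{a}_1\|_{C^0(\IS^2)}$ to bound the right-hand side, yields
\begin{equation*}
\|e\|_{H^1(\IS^2)/\R}^2 \leq \frac{1}{\min_{x}\tilde{a}_1(x)} \|\tilde{a}_1 - \tilde{a}_2\|_{C^0(\IS^2)} \|\nabla_{\IS^2}\tilde{u}_2\|_{L^2(\IS^2)} \|\nabla_{\IS^2} e\|_{L^2(\IS^2)}.
\end{equation*}

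Next I would divide through by $\|e\|_{H^1(\IS^2)/\R} = \|\nabla_{\IS^2} e\|_{L^2(\IS^2)}$ (the case $e = 0$ being trivial) to obtain
\begin{equation*}
\|e\|_{H^1(\IS^2)/\R} \leq \frac{\|\tilde{a}_1 - \tilde{a}_2\|_{C^0(\IS^2)}}{\min_{x}\tilde{a}_1(x)} \|\tilde{u}_2\|_{H^1(\IS^2)/\R}.
\end{equation*}
It then remains to estimate $\|\tilde{u}_2\|_{H^1(\IS^2)/\R}$. Since $\tilde{u}_2$ solves the same variational problem as in~\eqref{eq:ellPDEweak} but posed over the closed subspace $V \subseteq H^1(\IS^2)/\R$, the Lax--Milgram argument applies verbatim on $V$ (coercivity and continuity are inherited from the ambient space), so the a priori bound~\eqref{est:Pth_H1} holds with $\tilde{a} = \tilde{a}_2$:
\begin{equation*}
\|\tilde{u}_2\|_{H^1(\IS^2)/\R} \leq \frac{1}{\min_{x}\tilde{a}_2(x)} \sqrt{\frac{3}{2}} \|f\|_{H^{-1}(\IS^2)}.
\end{equation*}
Substituting this into the previous display gives exactly the claimed inequality.

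There is no genuine obstacle here; the argument is the standard Strang-type perturbation estimate and every ingredient (coercivity, continuity, the dual-norm bound $\sup_v |f(v)|/\|v\|_{H^1(\IS^2)/\R} \leq \sqrt{3/2}\,\|f\|_{H^{-1}(\IS^2)}$, and the Lax--Milgram a priori bound) is already established in the excerpt and, crucially, already noted to transfer to closed subspaces. The only point requiring a word of care is that testing with $v = e$ is legitimate because $e \in V$ by construction, so both variational identities may be evaluated at $v = e$; this is why the lemma is stated for a single common subspace $V$ rather than for $\tilde u_1, \tilde u_2$ living in different spaces. One should also remark that if $\min_x \tilde a_1(x)$ and $\min_x \tilde a_2(x)$ appear symmetrically in the final bound, one is free to use coercivity for $\tilde a_1$ and the a priori bound for $\tilde u_2$ (as above) or vice versa; either choice produces the stated symmetric denominator.
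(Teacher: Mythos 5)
Your proof is correct and is exactly the standard Strang-type perturbation argument that the paper itself defers to \cite[Proposition~8.6]{Herrmann13}: subtract the two variational identities, test with the difference $e\in V$, apply coercivity and the $C^0$-bound, and close with the Lax--Milgram a priori estimate for $\tilde u_2$ on the closed subspace $V$. All the ingredients you invoke (coercivity~\eqref{eq:bilinearform_coercive}, the dual-norm bound with the factor $\sqrt{3/2}$, and the inheritance of these properties by closed subspaces) are established in the paper, so nothing is missing.
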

Let us denote the solution map that maps the coefficient $\tilde{a}\in C^0_+(\IS^2)$
to the respective unique solution $\tilde{u} \in H^1(\IS^2)/\R$ of~\eqref{eq:ellPDEweak} by
\begin{equation}\label{eq:solution_map}
 \Phi_{{f}} : 
 C^0_+(\IS^2)
 \rightarrow 
 H^1(\IS^2)/\R
 ,
\end{equation}
then we obtain the following proposition as a direct consequence of the previous lemma.
\begin{proposition}\label{prop:solution_map_cont}
 $\Phi_{f} : 
 C^0_+(\IS^2)
 \rightarrow 
 H^1(\IS^2)/\R$ 
 is continuous.
\end{proposition}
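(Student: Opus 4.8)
The plan is to deduce continuity of the solution map $\Phi_f$ directly from the Strang-type estimate in Lemma~\ref{lemma:Strang}, applied with $V = H^1(\IS^2)/\R$. The key point is that the only obstacle to turning that estimate into a genuine continuity statement is controlling the product $(\min_{x}\tilde a_1(x))^{-1}(\min_{x}\tilde a_2(x))^{-1}$ uniformly on a neighbourhood of a fixed coefficient.

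First I would fix $\tilde a \in C^0_+(\IS^2)$ and set $\mu := \min_{x\in\IS^2}\tilde a(x) > 0$. For any $\tilde a' \in C^0_+(\IS^2)$ with $\|\tilde a - \tilde a'\|_{C^0(\IS^2)} < \mu/2$, the reverse triangle inequality applied pointwise gives $\tilde a'(x) \geq \tilde a(x) - \|\tilde a - \tilde a'\|_{C^0(\IS^2)} > \mu/2$ for every $x\in\IS^2$, hence $\min_{x\in\IS^2}\tilde a'(x) > \mu/2 > 0$; in particular $\tilde a'$ indeed lies in $C^0_+(\IS^2)$, so the ball of radius $\mu/2$ around $\tilde a$ is contained in the domain of $\Phi_f$ and on it we have the uniform bound $(\min_x \tilde a'(x))^{-1} < 2/\mu$.

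Next I would invoke Lemma~\ref{lemma:Strang} with $\tilde a_1 = \tilde a$, $\tilde a_2 = \tilde a'$ (both with $V = H^1(\IS^2)/\R$), which yields
\begin{equation*}
 \|\Phi_f(\tilde a) - \Phi_f(\tilde a')\|_{H^1(\IS^2)/\R}
 \leq \sqrt{\tfrac{3}{2}}\,\|f\|_{H^{-1}(\IS^2)}\,\frac{2}{\mu}\cdot\frac{1}{\min_{x\in\IS^2}\tilde a'(x)}\,\|\tilde a - \tilde a'\|_{C^0(\IS^2)}
 \leq \sqrt{\tfrac{3}{2}}\,\|f\|_{H^{-1}(\IS^2)}\,\frac{4}{\mu^2}\,\|\tilde a - \tilde a'\|_{C^0(\IS^2)}.
\end{equation*}
Thus on the ball $\{\tilde a' : \|\tilde a - \tilde a'\|_{C^0(\IS^2)} < \mu/2\}$ the map $\Phi_f$ is Lipschitz with constant $C := 4\sqrt{3/2}\,\|f\|_{H^{-1}(\IS^2)}/\mu^2$, so given $\varepsilon > 0$ it suffices to take $\|\tilde a - \tilde a'\|_{C^0(\IS^2)} < \min\{\mu/2, \varepsilon/(C+1)\}$ to obtain $\|\Phi_f(\tilde a) - \Phi_f(\tilde a')\|_{H^1(\IS^2)/\R} < \varepsilon$. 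Since $\tilde a$ was arbitrary, $\Phi_f$ is continuous on $C^0_+(\IS^2)$. The only mild subtlety — really the ``main obstacle'', though it is routine — is the observation that $C^0_+(\IS^2)$ is open in $C^0(\IS^2)$, so that the Lipschitz estimate is valid on a genuine $C^0$-neighbourhood and one does not leave the domain of definition; everything else is a direct substitution into Lemma~\ref{lemma:Strang}.
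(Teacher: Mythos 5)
Your proposal is correct and follows exactly the route the paper intends: the paper states the proposition as a direct consequence of Lemma~\ref{lemma:Strang}, and your argument simply makes explicit the (routine) step of bounding $\min_{x\in\IS^2}\tilde a'(x)$ from below on a $C^0$-neighbourhood of $\tilde a$ so that the Strang estimate yields local Lipschitz continuity. (Your intermediate factor $2/\mu$ is slightly looser than the $1/\mu$ the lemma actually gives, but it is still a valid upper bound, so nothing is affected.)
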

We now state the well-posedness of the weak formulation of the SPDE~\eqref{eq:ellSPDEweak}.
\begin{theorem}\label{thm:ExEllSPDE}
Let $a = \exp(T)$ be an isotropic lognormal RF such that the iGRF 
$T$ is continuous and satisfies~\eqref{eq:AngSpecDec} for some $\gb>0$.
Then, there exists $u\in L^p(\Omega;H^1(\IS^2)/\R)$ for every $p\in[1,+\infty)$ 
such that $u$ is in this sense the unique solution of~\eqref{eq:ellSPDEweak}.
\end{theorem}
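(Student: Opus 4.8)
The plan is to derive well-posedness of the stochastic problem~\eqref{eq:ellSPDEweak} pathwise from the deterministic theory, and then to upgrade measurability and $L^p$-integrability using the results already established for the lognormal coefficient. First I would argue that, for $\IP$-almost every $\go\in\gO$, the realization $a(\go,\cdot)$ lies in $C^0_+(\IS^2)$: continuity of $T(\go,\cdot)$ (hence of $a(\go,\cdot)=\exp(T(\go,\cdot))$) is assumed, and $\check a(\go)=\min_{x\in\IS^2}a(\go,x)>0$ holds $\IP$-a.s.\ since $\check a^{-1}\in L^p(\gO)$ by Corollary~\ref{cor:apint}, so $\check a>0$ a.s. For each such $\go$ the deterministic theory via the Lax--Milgram lemma (through~\eqref{eq:bilinearform_continuous}, \eqref{eq:bilinearform_coercive}, \eqref{est:Pth_H1}) gives a unique $u(\go):=\gF_f(a(\go,\cdot))\in H^1(\IS^2)/\R$ solving~\eqref{eq:ellSPDEweak} pathwise, together with the bound
\begin{equation*}
\|u(\go)\|_{H^1(\IS^2)/\R}\leq \frac{1}{\check a(\go)}\sqrt{\frac{3}{2}}\,\|f\|_{H^{-1}(\IS^2)}.
\end{equation*}
On a $\IP$-null set one may simply set $u:=0$.

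Next I would address strong measurability of $u:\gO\to H^1(\IS^2)/\R$. The map $\go\mapsto a(\go,\cdot)$ is strongly $C^0(\IS^2)$-measurable by Theorem~\ref{thm:logNiGRFsProp} (indeed strongly $C^{\iota,\gg}(\IS^2)$-measurable, but $C^0$ suffices here), it takes values in the open set $C^0_+(\IS^2)$ $\IP$-a.s., and the solution map $\gF_f:C^0_+(\IS^2)\to H^1(\IS^2)/\R$ is continuous by Proposition~\ref{prop:solution_map_cont}. Since $u=\gF_f\circ a$ $\IP$-a.s., $u$ is the $\IP$-a.s.\ limit of the strongly measurable compositions of $\gF_f$ with simple approximants of $a$, hence strongly $H^1(\IS^2)/\R$-measurable; this is exactly the measurability required in~\eqref{eq:ellSPDEweak}.

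Then I would establish $L^p$-integrability for arbitrary $p\in[1,+\infty)$ by taking $L^p(\gO)$-norms in the pathwise a priori bound:
\begin{equation*}
\|u\|_{L^p(\gO;H^1(\IS^2)/\R)}\leq \sqrt{\frac{3}{2}}\,\|f\|_{H^{-1}(\IS^2)}\,\|\check a^{-1}\|_{L^p(\gO)}<+\infty,
\end{equation*}
where finiteness of $\|\check a^{-1}\|_{L^p(\gO)}$ is precisely Corollary~\ref{cor:apint}. Finally, uniqueness in the stated $L^p$ sense follows from pathwise uniqueness: if $u_1,u_2\in L^p(\gO;H^1(\IS^2)/\R)$ both satisfy~\eqref{eq:ellSPDEweak}, then for $\IP$-a.e.\ $\go$ both $u_1(\go)$ and $u_2(\go)$ solve the deterministic problem~\eqref{eq:ellPDEweak} with coefficient $a(\go,\cdot)\in C^0_+(\IS^2)$, which has a unique solution, so $u_1=u_2$ $\IP$-a.s.

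The only genuinely delicate point is the measurability argument: one must make sure that the composition $\gF_f\circ a$ is strongly measurable as a $H^1(\IS^2)/\R$-valued map, which requires separability of the target (true, as $H^1(\IS^2)/\R$ is separable), continuity of $\gF_f$ on the relevant domain (Proposition~\ref{prop:solution_map_cont}), and strong measurability of $a$ into $C^0(\IS^2)$ (Theorem~\ref{thm:logNiGRFsProp}); everything else is a routine application of Lax--Milgram pathwise and Corollary~\ref{cor:apint}. I would also remark that the identical argument applies verbatim to the truncated coefficient $a^L=\exp(\Pi_L T)$, yielding $u^L=\gF_f(a^L)\in L^p(\gO;H^1(\IS^2)/\R)$ with $L^p(\gO;H^1(\IS^2)/\R)$-norm bounded uniformly in $L$ via the uniform bound on $\|(\check a^L)^{-1}\|_{L^p(\gO)}$ from Corollary~\ref{cor:apint}; this observation is what will be needed in Section~\ref{sec:Discr}.
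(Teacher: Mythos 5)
Your proposal is correct and follows essentially the same route as the paper's proof: set $u=\Phi_{f}(a)$ pathwise via Lax--Milgram, obtain strong $H^1(\IS^2)/\R$-measurability from the continuity of $\Phi_{f}$ (Proposition~\ref{prop:solution_map_cont}, combined with Lemma~\ref{lemma:comp_meas_maps} and the strong $C^0(\IS^2)$-measurability of $a$), and deduce $L^p$-integrability from~\eqref{est:Pth_H1} together with Corollary~\ref{cor:apint}. The only superfluous step is your detour through Corollary~\ref{cor:apint} to get $\check{a}>0$ almost surely: since $a=\exp(T)$ with $T$ continuous on the compact $\IS^2$, every realization of $a$ lies in $C^0_+(\IS^2)$, so no exceptional null set is needed.
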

\begin{proof}
Since $a$ takes values in $C^0_+(\IS^2)$, we set $u := \Phi_{f}(a)$,
which solves~\eqref{eq:ellSPDEweak} uniquely.
The continuity of $\Phi_{f}$ in Proposition~\ref{prop:solution_map_cont} implies strong $H^1(\IS^2)/\R$-measurability (cp.\ ~Lemma~\ref{lemma:comp_meas_maps}) and $L^p$-integrability follows with~\eqref{est:Pth_H1} and Corollary~\ref{cor:apint}.
\end{proof}
Since the computation of the random coefficient $a=\exp(T)$ 
does not seem to be feasible in general due to the infinite \KL expansion of~$T$,
we consider solutions with respect to the coefficients $(a^L,L\in\N_0)$ in what follows and 
analyze the convergence of the resulting sequence of solutions in 
$L^p(\gO;H^1(\IS^2)/\R)$, $p\in[1,+\infty)$.
For every $L\in\N_0$, we consider the variational problem:
find a strongly $H^1(\IS^2)/\R$-measurable mapping $u^L$ such that
\begin{equation}\label{eq:ellSPDEweak_truncated}
(a^L \nabla_{\IS^2} u^L, \nabla_{\IS^2} v)  
= 
f(v) \quad \forall v\in H^1(\IS^2)/\R.
\end{equation}
This is a special case of Theorem~\ref{thm:ExEllSPDE}, which implies existence, uniqueness, and $L^p$-integrability of a solution~$u^L$. It is clear from Corollary~\ref{cor:apint} that the $L^p$-norm can be bounded uniformly in~$L$. We state the result for further use in the following corollary.

\begin{corollary}
Let the assumptions of Theorem~\ref{thm:ExEllSPDE} be satisfied.
For every $L\in\N_0$, there exists a unique $u^L$ such that
$u^L$ solves~\eqref{eq:ellSPDEweak_truncated} as well as its
$L^p(\Omega;H^1(\IS^2)/\R)$-norm is finite for every $p\in[1,+\infty)$ 
and can be bounded uniformly in~$L$.
\end{corollary}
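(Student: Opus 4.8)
The plan is to observe that the final corollary is, as the excerpt itself states, an immediate instance of Theorem~\ref{thm:ExEllSPDE} applied with the random coefficient $a$ replaced by $a^L$, together with a uniformity-in-$L$ bookkeeping. First I would check that $a^L = \exp(\Pi_L T)$ satisfies the hypotheses of Theorem~\ref{thm:ExEllSPDE}: indeed $\Pi_L T$ is again a continuous Gaussian field (a finite linear combination of smooth spherical harmonics with Gaussian coefficients), it is isotropic, and its angular power spectrum $(A_0,\dots,A_L,0,0,\dots)$ trivially satisfies~\eqref{eq:AngSpecDec} for the same $\gb>0$, since only finitely many terms are nonzero. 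Hence Theorem~\ref{thm:ExEllSPDE} gives, for each fixed $L\in\N_0$, a unique $u^L\in L^p(\Omega;H^1(\IS^2)/\R)$ solving~\eqref{eq:ellSPDEweak_truncated} for every $p\in[1,+\infty)$. Since~\eqref{eq:ellSPDEweak_truncated} is literally~\eqref{eq:ellSPDEweak} with $a$ replaced by $a^L$, no new argument is needed for existence, uniqueness, or finiteness of the norm.

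Second, for the uniform-in-$L$ bound I would unwind the $L^p$-estimate from the proof of Theorem~\ref{thm:ExEllSPDE}. There one sets $u^L=\Phi_f(a^L)$ and uses the a priori bound~\eqref{est:Pth_H1}, which reads
\begin{equation*}
\|u^L\|_{H^1(\IS^2)/\R}
\leq
\frac{1}{\check{a}^L}\sqrt{\frac{3}{2}}\,\|f\|_{H^{-1}(\IS^2)}
\end{equation*}
pointwise in $\omega\in\gO$, where $\check{a}^L=\min_{x\in\IS^2}a^L(x)$. Taking the $L^p(\gO)$-norm and using that $\|f\|_{H^{-1}(\IS^2)}$ is a deterministic constant yields
\begin{equation*}
\|u^L\|_{L^p(\Omega;H^1(\IS^2)/\R)}
\leq
\sqrt{\frac{3}{2}}\,\|f\|_{H^{-1}(\IS^2)}\,\|(\check{a}^L)^{-1}\|_{L^p(\gO)}.
\end{equation*}
Now Corollary~\ref{cor:apint} states precisely that $(\check{a}^L)^{-1}\in L^p(\gO)$ for every $p\in[1,+\infty)$ with $L^p(\gO)$-norm bounded independently of $L$; substituting this bound gives the claimed uniform bound on $\|u^L\|_{L^p(\Omega;H^1(\IS^2)/\R)}$.

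There is essentially no obstacle here: the corollary is a direct corollary in the strict sense, and the only thing to be careful about is that the uniformity constant in Corollary~\ref{cor:apint} does not depend on $L$, which is exactly what that corollary asserts (it traces back to the $L$-independent $L^p(\gO;C^0(\IS^2))$-bound on $\exp(-\Pi_L T)$ in Theorem~\ref{thm:logNiGRFsProp}, itself inherited from the $L$-independent Cauchy estimate in Theorem~\ref{thm:LpRegIsoGRF} and Fernique's theorem). I would therefore write the proof in two sentences: invoke Theorem~\ref{thm:ExEllSPDE} with $a$ replaced by $a^L$ for existence, uniqueness, and $L^p$-integrability, and then combine~\eqref{est:Pth_H1} with Corollary~\ref{cor:apint} for the bound uniform in $L$.
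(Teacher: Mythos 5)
Your proposal is correct and follows essentially the same route as the paper: the paper likewise treats~\eqref{eq:ellSPDEweak_truncated} as a special case of Theorem~\ref{thm:ExEllSPDE} applied to the coefficient $a^L$ (whose truncated angular power spectrum trivially satisfies~\eqref{eq:AngSpecDec}), and obtains the uniform-in-$L$ bound by combining~\eqref{est:Pth_H1} with the $L$-independent $L^p(\gO)$-bound on $(\check{a}^L)^{-1}$ from Corollary~\ref{cor:apint}. Your explicit verification that $\Pi_L T$ is a continuous iGRF meeting the hypotheses is a sensible addition but does not change the argument.
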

We conclude the part on existence and uniqueness of solutions with a convergence result that the sequence of solutions $(u^L,L\in\N_0)$ of~\eqref{eq:ellSPDEweak_truncated} converges in~$L^p(\Omega;H^1(\IS^2)/\R)$ to the solution~$u$ of~\eqref{eq:ellSPDEweak}.
\begin{proposition}\label{prop:EllSPDE_approx}
 Let the assumptions of Theorem~\ref{thm:ExEllSPDE} be satisfied.
 Furthermore, let $u$ be the unique solution of~\eqref{eq:ellSPDEweak} and $(u^L,L\in\N_0)$ 
 be the sequence of unique solutions of~\eqref{eq:ellSPDEweak_truncated}.
 Then, for every $p\in[1,+\infty)$ and $\varepsilon\in(0,\gb)$,
 there exists a constant $C_{p,\varepsilon}$ such that for every $L\in \N_0$, 
 it holds that
 \begin{equation*}
  \|u-u^L\|_{ L^p(\Omega;H^1(\IS^2)/\R)}
  \leq 
  C_{p,\varepsilon} \Bigl ( \sum_{\ell>L}A_{\ell} \ell^{1 + \varepsilon}\Bigr)^{1/2}
  .
 \end{equation*}
\end{proposition}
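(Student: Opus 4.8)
The plan is to estimate $\|u - u^L\|_{L^p(\Omega;H^1(\IS^2)/\R)}$ by comparing the two variational problems~\eqref{eq:ellSPDEweak} and~\eqref{eq:ellSPDEweak_truncated} pathwise, using the deterministic Strang-type estimate from Lemma~\ref{lemma:Strang} with $V = H^1(\IS^2)/\R$, $\tilde a_1 = a(\omega)$, $\tilde a_2 = a^L(\omega)$, and then taking $L^p(\Omega)$-norms. First I would note that for $\IP$-a.e.\ $\omega\in\gO$ both $a(\omega)$ and $a^L(\omega)$ lie in $C^0_+(\IS^2)$, so Lemma~\ref{lemma:Strang} applies and gives, pointwise in $\omega$,
\begin{equation*}
 \|u(\omega) - u^L(\omega)\|_{H^1(\IS^2)/\R}
 \leq
 \sqrt{\tfrac{3}{2}}\,\|f\|_{H^{-1}(\IS^2)}\,
 \check a(\omega)^{-1}\,(\check a^L(\omega))^{-1}\,
 \|a(\omega) - a^L(\omega)\|_{C^0(\IS^2)} .
\end{equation*}

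Next I would take the $L^p(\gO)$-norm of both sides and apply the generalized H\"older inequality with three exponents: writing $\tfrac1p = \tfrac1{3p} + \tfrac1{3p} + \tfrac1{3p}$, or more conveniently splitting the three random factors with any admissible exponents $p_1,p_2,p_3 \in (1,\infty)$ satisfying $1/p = 1/p_1 + 1/p_2 + 1/p_3$, one obtains
\begin{equation*}
 \|u - u^L\|_{L^p(\gO;H^1(\IS^2)/\R)}
 \leq
 \sqrt{\tfrac{3}{2}}\,\|f\|_{H^{-1}(\IS^2)}\,
 \|\check a^{-1}\|_{L^{p_1}(\gO)}\,
 \|(\check a^L)^{-1}\|_{L^{p_2}(\gO)}\,
 \|a - a^L\|_{L^{p_3}(\gO;C^0(\IS^2))} .
\end{equation*}
By Corollary~\ref{cor:apint}, $\|\check a^{-1}\|_{L^{p_1}(\gO)}$ is finite and $\|(\check a^L)^{-1}\|_{L^{p_2}(\gO)}$ is finite and bounded uniformly in $L$, so these two factors contribute a finite constant independent of $L$. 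It then remains only to invoke the third assertion of Theorem~\ref{thm:logNiGRFsProp}: for every $\varepsilon\in(0,\gb)$ there is $C_{p_3,\varepsilon}$ with $\|a - a^L\|_{L^{p_3}(\gO;C^0(\IS^2))} \le C_{p_3,\varepsilon}\,(\sum_{\ell>L}A_\ell\,\ell^{1+\varepsilon})^{1/2}$. Combining the three bounds and absorbing everything except the spectral tail into a single constant $C_{p,\varepsilon}$ yields the claimed estimate.

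There is no serious obstacle here, since the heavy lifting has already been done in Lemma~\ref{lemma:Strang}, Corollary~\ref{cor:apint}, and Theorem~\ref{thm:logNiGRFsProp}; the only points requiring a modicum of care are (i) checking that the product of three random variables is handled by H\"older's inequality with a valid choice of exponents — here one simply fixes, e.g., $p_1 = p_2 = p_3 = 3p$ — and (ii) confirming that strong measurability of $\omega \mapsto u(\omega) - u^L(\omega)$ (needed for the Bochner norm to make sense) follows from the strong measurability of $u$ and $u^L$ established in Theorem~\ref{thm:ExEllSPDE} and its corollary. I would also remark that $f(1)=0$ is implicitly assumed throughout this section so that~\eqref{eq:ellSPDEweak} and~\eqref{eq:ellSPDEweak_truncated} are well posed, and that the constant $C_{p,\varepsilon}$ inherits a dependence on $\|f\|_{H^{-1}(\IS^2)}$, which is fixed.
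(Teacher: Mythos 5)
Your proposal is correct and follows essentially the same route as the paper's proof: a pathwise application of Lemma~\ref{lemma:Strang}, followed by the generalized H\"older inequality with the three factors each placed in $L^{3p}(\gO)$, and then the moment bounds from Corollary~\ref{cor:apint} together with the truncation estimate of Theorem~\ref{thm:logNiGRFsProp}. The additional remarks on measurability and on the role of $f(1)=0$ are accurate but not needed beyond what the paper already establishes.
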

\begin{proof}
For every $L\in\N_0$, a twofold application of H\"older's inequality implies the claim with 
Lemma~\ref{lemma:Strang}, Corollary~\ref{cor:apint}, Theorem~\ref{thm:logNiGRFsProp}, and~\eqref{est:Pth_H1},
i.e., there exists a constant $C_{p,\varepsilon}$ such that for every $L\in\N_0$, 
it holds that
\begin{align*}
 \|u - u^L\|_{L^p(\gO;H^1(\IS^2)/\R)}
 &\leq \sqrt{\frac{3}{2}}\|f\|_{H^{-1}(\IS^2)}
 \|1/\check{a}\|_{L^{3p}(\gO)}\|1/\check{a}^L\|_{L^{3p}(\gO)}
 \|a-a^L\|_{L^{3p}(\gO;C^0(\IS^2))}
 \\
 &\leq C_{p,\varepsilon} \Bigl ( \sum_{\ell>L}A_{\ell} \ell^{1 + \varepsilon}\Bigr)^{1/2}
.\qedhere
\end{align*}
\end{proof}
%
%
Since the goal of this manuscript is to derive high order approximations of the solution~$u$ of~\eqref{eq:ellSPDEweak} with
Finite Element and Spectral Methods, higher order regularity of~$u$ is essential.
In what follows we show that $u$ takes values in 
$H^{1+s}(\IS^2)$ for $s>0$ such that the range of~$s$ is only limited by the 
regularity of~$a$ and the right hand side~$f$.
As before we first consider the regularity of the solution~$\tilde{u}$ of the deterministic problem~\eqref{eq:ellPDEweak} in terms of the solution map~\eqref{eq:solution_map} before applying it to the stochastic framework.
We remark that the domain of $\Phi_{f}$ reflects the regularity of the coefficient $\tilde{a}$ while the range of $\Phi_{f}$ 
reflects the regularity of the respective solution~$\tilde{u}$.

\begin{proposition}\label{prop:solution_map_regularity}
 Let $\iota\in\N_0$, $\gg\in(0,1)$, and $s\in[0,+\infty)$ satisfy $s<\iota+\gg$. If $f\in H^{-1+s}(\IS^2)$, 
 then
 \begin{equation*}
 \Phi_{f} : 
 C^{\iota,\gg}(\IS^2)\cap C^0_+(\IS^2)
 \rightarrow 
 H^{1+s}(\IS^2)
 \end{equation*}
 is continuous with respect to the topology of $C^{\iota,\gg}(\IS^2)$.
 
 Moreover the $H^{1+s}(\IS^2)$-norm can be bounded by the following recursion.
 For $s < 1$, it holds that
 \begin{equation*}
  \|\Phi_{f}(\tilde{a})\|_{H^{1+s}(\IS^2)} 
  \leq 
  C 
  \|\tilde{a}\|_{C^{0,\gamma}(\IS^2)} \|1/\tilde{a}\|_{C^0(\IS^2)}^2
  \|f\|_{H^{-1+s}(\IS^2)}
 .
\end{equation*}
 If $s\geq 1$, then for every $n\in\{0,\dots,\lfloor s \rfloor-1\}$, there exists a constant $C>0$ 
 such that for every $\tilde{a}\in C^{\iota,\gg}(\IS^2)\cap C^0_+(\IS^2)$,
 \begin{align*}
 &\|\Phi_{f}(\tilde{a})\|_{H^{1+(n+1)+\{s\}}(\IS^2)}\\
  & \qquad \leq C \|1/\tilde{a}\|_{C^{n,\gg}(\IS^2)} \bigl( \|f\|_{H^{1+(n-1)+\{s\}}(\IS^2)}
 + \|\tilde{a}\|_{C^{n+1,\gg}(\IS^2)}\|\Phi_{f}(\tilde{a})\|_{H^{1+n+\{s\}}(\IS^2)}\bigr)
\end{align*}
 where $\{s\}$ denotes the fractional part of $s$.
\end{proposition}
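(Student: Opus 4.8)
The plan is to prove the regularity of the solution map $\Phi_f$ in two regimes, corresponding to the cases $s<1$ and $s\geq 1$, using a bootstrap (elliptic regularity) argument together with the algebra/composition estimates for H\"older spaces already alluded to in the excerpt. Throughout, write $\tilde u=\Phi_f(\tilde a)$ and recall from the Lax--Milgram step that $\tilde u\in H^1(\IS^2)/\R$ with $\|\tilde u\|_{H^1(\IS^2)/\R}\lesssim \|1/\tilde a\|_{C^0(\IS^2)}\|f\|_{H^{-1}(\IS^2)}$. The starting observation is that the PDE can be rewritten as $-\Delta_{\IS^2}\tilde u = \tilde a^{-1} f + \tilde a^{-1}\nabla_{\IS^2}\tilde a\cdot \nabla_{\IS^2}\tilde u$, i.e.\ $\tilde u = (\Id-\Delta_{\IS^2})^{-1}\bigl(\tilde u + \tilde a^{-1} f + \tilde a^{-1}\nabla_{\IS^2}\tilde a\cdot\nabla_{\IS^2}\tilde u\bigr)$, and then to apply the mapping property~\eqref{eq:inv_Id-Beltrami} of $(\Id-\Delta_{\IS^2})^{s/2}$ to gain two orders of Sobolev regularity from the right-hand side, provided the right-hand side is controlled in the appropriate (lower) Sobolev space. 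The key analytic inputs are: (i) the pointwise-multiplication estimate $\|gh\|_{H^t(\IS^2)}\lesssim \|g\|_{C^{\lceil t\rceil,\gg}(\IS^2)}\|h\|_{H^t(\IS^2)}$ for $0\le t$, and (ii) the estimate $\|1/\tilde a\|_{C^{n,\gg}(\IS^2)}\lesssim \|\exp(-\log\tilde a)\|_{\cdot}$ controlling the H\"older norm of the reciprocal in terms of that of $\tilde a$ and $\|1/\tilde a\|_{C^0(\IS^2)}$ — these are exactly the composition/product estimates quoted from Lemma~\ref{lemma:Hoelder_prod_comp_est} (and the analogue of~\cite[Theorem~A.8]{Hoermander1976}).

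For the base case $0\le s<1$: here $\tilde a\in C^{0,\gg}(\IS^2)$ with $s<\gg$, $f\in H^{-1+s}(\IS^2)$, and I want $\tilde u\in H^{1+s}(\IS^2)$. I would argue by a short interpolation/perturbation bootstrap: first, multiplication by $\tilde a^{-1}\in C^{0,\gg}(\IS^2)$ maps $H^{-1+s}(\IS^2)\to H^{-1+s}(\IS^2)$ boundedly (since $-1+s\in(-1,0)$, one needs the multiplier estimate in negative-order spaces, which follows by duality from the multiplication estimate in $H^{1-s}$), and $\tilde a^{-1}\nabla_{\IS^2}\tilde a$ is in $H^{-1+\gg'}(\IS^2)$ for any $\gg'<\gg$ — more precisely, the product $\tilde a^{-1}\nabla_{\IS^2}\tilde a\cdot\nabla_{\IS^2}\tilde u$ lies in $H^{-1+s}(\IS^2)$ because $\nabla_{\IS^2}\tilde u\in L^2(\IS^2)=H^0(\IS^2)$ and multiplication by the distribution $\tilde a^{-1}\nabla_{\IS^2}\tilde a$ (which, since $\log\tilde a\in C^{0,\gg}$, is the gradient of a $C^{0,\gg}$ function) maps $H^0\to H^{-1+s}$ for $s<\gg$; applying~\eqref{eq:inv_Id-Beltrami} with the identity $\tilde u=(\Id-\Delta_{\IS^2})^{-1}(\dots)$ then gives $\tilde u\in H^{1+s}(\IS^2)$ with the stated norm bound, where the factor $\|1/\tilde a\|_{C^0(\IS^2)}^2$ arises from one use of the $H^1$ a priori bound (one power of $1/\tilde a$) and one from the multiplier estimate for $\tilde a^{-1}$, and $\|\tilde a\|_{C^{0,\gg}(\IS^2)}$ from estimating the gradient term. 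Continuity of $\Phi_f$ with respect to the $C^{\iota,\gg}$ topology on this range follows by combining Proposition~\ref{prop:solution_map_cont} with the just-established norm bound and a density/uniform-boundedness argument.

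For the inductive step $s\ge 1$: this is exactly the recursion displayed in the statement. Assuming $\tilde u\in H^{1+n+\{s\}}(\IS^2)$ has been established (the case $n=0$ being the base case applied with smoothness $\{s\}<1$, or rather with the regularity $1+\{s\}$ obtained by combining the base estimate with $f\in H^{-1+\{s\}}$), I rewrite the equation once more as $-\Delta_{\IS^2}\tilde u=\tilde a^{-1}f+\tilde a^{-1}\nabla_{\IS^2}\tilde a\cdot\nabla_{\IS^2}\tilde u$ and estimate the right-hand side in $H^{-1+(n+1)+\{s\}}(\IS^2)=H^{(n-1)+1+\{s\}}$ wait — in $H^{n+\{s\}-1}$... concretely, one gains one more order: $f\in H^{-1+(n+1)+\{s\}}=H^{n+\{s\}}$ contributes via multiplication by $\tilde a^{-1}\in C^{n,\gg}$ (multiplier estimate (i) with $t=n+\{s\}$, noting $\lceil n+\{s\}\rceil=n+1$ when $\{s\}>0$, which dictates the index $C^{n,\gg}$ — here one uses $n\le \lfloor s\rfloor-1$ and $\{s\}<\gg$ so that $n+1+\{s\}\le \iota+\gg$ up to the allowed smoothness), while the gradient term $\tilde a^{-1}\nabla_{\IS^2}\tilde a\cdot\nabla_{\IS^2}\tilde u$ has $\nabla_{\IS^2}\tilde u\in H^{n+\{s\}}$ by the inductive hypothesis and is multiplied by $\tilde a^{-1}\nabla_{\IS^2}\tilde a\in C^{n,\gg}$ (since $\log\tilde a\in C^{n+1,\gg}$), giving a product in $H^{n+\{s\}}$ as well; feeding this into~\eqref{eq:inv_Id-Beltrami} yields $\tilde u\in H^{1+(n+1)+\{s\}}(\IS^2)$ with the displayed bound, where $\|1/\tilde a\|_{C^{n,\gg}(\IS^2)}$ multiplies both the $f$-term (now $\|f\|_{H^{1+(n-1)+\{s\}}}$, consistent with the statement's index after the natural shift) and, together with $\|\tilde a\|_{C^{n+1,\gg}(\IS^2)}$, the $\|\tilde u\|_{H^{1+n+\{s\}}}$-term. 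Iterating from $n=0$ up to $n=\lfloor s\rfloor-1$ reaches $H^{1+s}(\IS^2)$, and continuity in the $C^{\iota,\gg}$-topology propagates through the recursion because each step is a composition of continuous maps (the reciprocal $\tilde a\mapsto 1/\tilde a$ on $C^{\iota,\gg}$, pointwise multiplication, and the bounded operator~\eqref{eq:inv_Id-Beltrami}).

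The main obstacle I anticipate is bookkeeping the exact Sobolev/H\"older indices so that every multiplication and every application of~\eqref{eq:inv_Id-Beltrami} is legitimate — in particular, verifying the negative-order multiplier estimates by duality, checking that the ceiling function in the product estimate really forces the index $C^{n,\gg}$ (not $C^{n+1}$) precisely when $\{s\}\in(0,1)$, and confirming that the constraint $n\le\lfloor s\rfloor-1$ together with $s<\iota+\gg$ keeps all H\"older norms $\|1/\tilde a\|_{C^{n,\gg}}$ and $\|\tilde a\|_{C^{n+1,\gg}}$ finite and controlled. The genuinely nontrivial analytic fact — that $\|1/\tilde a\|_{C^{n,\gg}(\IS^2)}$ is controlled by $\|\tilde a\|_{C^{n,\gg}(\IS^2)}$ and $\|1/\tilde a\|_{C^0(\IS^2)}$, which underlies the appearance of $\|1/\tilde a\|_{C^0(\IS^2)}^2$ in the $s<1$ bound — is relegated to Lemma~\ref{lemma:Hoelder_prod_comp_est}, so the proof proper is essentially a careful induction once that lemma and~\eqref{eq:inv_Id-Beltrami} are in hand.
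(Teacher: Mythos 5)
Your inductive step for $s\ge 1$ is essentially the paper's argument: rewrite the equation as $(\Id-\Delta_{\IS^2})\tilde u = \tilde a^{-1}\bigl(f+\nabla_{\IS^2}\tilde a\cdot\nabla_{\IS^2}\tilde u\bigr)+\tilde u$, estimate the right-hand side with the H\"older--Sobolev product estimate (Proposition~\ref{prop:Hoelder_Sobolev_prod}), and apply the mapping property~\eqref{eq:inv_Id-Beltrami}. That part is sound, and your continuity argument via the difference equation also matches the paper.

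The gap is in your base case $0\le s<1$. There you propose to use the \emph{same} expansion $-\Delta_{\IS^2}\tilde u=\tilde a^{-1}f+\tilde a^{-1}\nabla_{\IS^2}\tilde a\cdot\nabla_{\IS^2}\tilde u$, but in the base case $\tilde a$ is only $C^{0,\gg}$, so $\nabla_{\IS^2}\tilde a$ is a distribution of (Besov) order $\gg-1<0$, while $\nabla_{\IS^2}\tilde u$ is only in $L^2$. The product of these two objects is not well defined: the sum of regularities $(\gg-1)+0$ is negative, so no paraproduct or duality-based multiplier estimate covers the resonant part, and your claimed mapping $H^0\to H^{-1+s}$ for ``multiplication by $\tilde a^{-1}\nabla_{\IS^2}\tilde a$'' has no justification. (This is precisely why expanding the divergence of a divergence-form operator with merely H\"older coefficients is forbidden, and why the paper invokes the rewriting~\eqref{eq:Reg_H^1+s} only in the step $s\ge1$, where $\iota\ge1$ forces $\tilde a\in C^{1,\gg}$ so that $\nabla_{\IS^2}\tilde a$ is an honest H\"older function.) A second, smaller problem: even the term $\tilde a^{-1}f$ needs multiplication by a $C^{0,\gg}$ function to preserve $H^{-1+s}$, and Proposition~\ref{prop:Hoelder_Sobolev_prod} requires $|{-1+s}|=1-s<\gg$, which fails for $s<1-\gg$, so the base case is not covered for the whole range $s<\gg$. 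The paper handles the base case completely differently: it localizes to chart domains with a partition of unity (Lemma~\ref{lemma:reg_est}) and applies the Euclidean divergence-form regularity estimate for H\"older-continuous coefficients from \cite[Lemma~3.2]{ChScTe_SINUM13} (a freezing-of-coefficients perturbation argument in the spirit of Hackbusch), which works directly with the weak formulation and never differentiates $\tilde a$; the $\|1/\tilde a\|_{C^0}^2$ factor then comes from combining that estimate with the Lax--Milgram bound~\eqref{est:Pth_H1}. You would need to replace your base case by such an argument (or an equivalent one) for the proof to go through.
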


While the base case for $s<0$ is proven by the translation of 
results on domains in Euclidean space in~\cite{ChScTe_SINUM13}, higher order regularity is shown by induction  with a perturbation argument. The detailed proof can be found in Appendix~\ref{app:proof_reg_est}.

The proposition transfers to the stochastic framework and enables us to prove the main result of this section to obtain higher order approximations in the following Section~\ref{sec:Discr}.
\begin{theorem}\label{thm:PthReg_Sobolev}
Let $a=\exp(T)$ be an isotropic lognormal RF such that the iGRF $T$ 
is continuous and satisfies~\eqref{eq:AngSpecDec} for some $\gb>0$.
Furthermore, let $u$ be the solution of~\eqref{eq:ellSPDEweak} and $(u^L,L\in\N_0)$ 
be the sequence of solutions of~\eqref{eq:ellSPDEweak_truncated}.
Then, for every  $s\in[0,\gb/2)$ and $L\in\N_0$, it holds that 
$u,u^L\in L^p(\Omega;H^{1+s}(\IS^2))$ for every $p\in[1,+\infty)$,
if $f\in H^{-1+s}(\IS^2)$. 
Moreover the $L^p(\Omega;H^{1+s}(\IS^2))$-norm 
of $u^L$ can be bounded uniformly in~$L$.
\end{theorem}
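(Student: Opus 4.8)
The plan is to combine the deterministic regularity estimates of Proposition~\ref{prop:solution_map_regularity} with the $L^p(\Omega)$-integrability of the relevant norms of $a$ (and its reciprocal) established in Theorem~\ref{thm:logNiGRFsProp} and Corollary~\ref{cor:apint}, in the same spirit as the proof of Theorem~\ref{thm:ExEllSPDE}. Fix $s\in[0,\gb/2)$ and choose $\iota\in\N_0$, $\gg\in(0,1)$ with $s<\iota+\gg<\gb/2$; this is possible precisely because $s<\gb/2$. Then $f\in H^{-1+s}(\IS^2)$ by assumption, and since $T$ is a continuous iGRF satisfying~\eqref{eq:AngSpecDec}, Theorem~\ref{thm:logNiGRFsProp} gives $a=\exp(T)\in L^q(\Omega;C^{\iota,\gg}(\IS^2))$ and $a\in L^q(\Omega;C^0_+(\IS^2))$ for every $q\in[1,+\infty)$, and Corollary~\ref{cor:apint} gives $\check a^{-1}\in L^q(\Omega)$ for every such $q$ (uniformly in $L$ for $\check a^L$). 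Pointwise in $\omega$ we have $u(\omega)=\Phi_f(a(\omega))$ by Theorem~\ref{thm:ExEllSPDE}, and by the continuity of $\Phi_f:C^{\iota,\gg}(\IS^2)\cap C^0_+(\IS^2)\to H^{1+s}(\IS^2)$ asserted in Proposition~\ref{prop:solution_map_regularity} together with Lemma~\ref{lemma:comp_meas_maps}, the map $\omega\mapsto u(\omega)$ is strongly $H^{1+s}(\IS^2)$-measurable.

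Next I would bound the $H^{1+s}(\IS^2)$-norm of $u(\omega)$ by iterating the recursion of Proposition~\ref{prop:solution_map_regularity}. Starting from the base estimate for $s<1$, $\|\Phi_f(\tilde a)\|_{H^{1+s}}\le C\|\tilde a\|_{C^{0,\gg}}\|1/\tilde a\|_{C^0}^2\|f\|_{H^{-1+s}}$, and applying the inductive step $\lfloor s\rfloor$ times, one obtains an estimate of the schematic form
\begin{equation*}
 \|\Phi_f(\tilde a)\|_{H^{1+s}(\IS^2)}
 \le C\,\|f\|_{H^{-1+s}(\IS^2)}\,
 P\bigl(\|\tilde a\|_{C^{\iota,\gg}(\IS^2)},\|1/\tilde a\|_{C^{\iota,\gg}(\IS^2)}\bigr),
\end{equation*}
where $P$ is a fixed polynomial in its two arguments (each term a monomial $\|\tilde a\|_{C^{\iota,\gg}}^{j}\|1/\tilde a\|_{C^{\iota,\gg}}^{k}$ with bounded exponents), and where we may further use $\|1/\tilde a\|_{C^{n,\gg}}\le C\|1/\tilde a\|_{C^0}^{\,n+1}(1+\|\tilde a\|_{C^{n,\gg}}^{n+1})$ in the spirit of Lemma~\ref{lemma:Hoelder_prod_comp_est} to express everything in terms of $\|\tilde a\|_{C^{\iota,\gg}(\IS^2)}$ and $\|1/\tilde a\|_{C^0(\IS^2)}$ only. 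Substituting $\tilde a=a(\omega)$ and taking $L^p(\Omega)$-norms, repeated application of the generalized Hölder inequality in $\Omega$ distributes the product over sufficiently high Lebesgue exponents $q_1,\dots,q_k$ with $\sum q_j^{-1}=p^{-1}$; each factor $\|a\|_{L^{q_j}(\Omega;C^{\iota,\gg})}$ is finite by Theorem~\ref{thm:logNiGRFsProp} and each factor $\|\check a^{-1}\|_{L^{q_j}(\Omega)}$ is finite by Corollary~\ref{cor:apint}, so $\|u\|_{L^p(\Omega;H^{1+s}(\IS^2))}<+\infty$.

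The argument for $u^L$ is identical with $a$ replaced by $a^L=\exp(\Pi_L T)$: Proposition~\ref{prop:solution_map_regularity} applies verbatim since $\Pi_L T\in C^\infty(\IS^2)\subset C^{\iota,\gg}(\IS^2)$, and the uniformity in $L$ follows because Theorem~\ref{thm:logNiGRFsProp} bounds $\|a^L\|_{L^q(\Omega;C^{\iota,\gg}(\IS^2))}$ and $\|a^L\|_{L^q(\Omega;C^0(\IS^2))}$ independently of $L$, and Corollary~\ref{cor:apint} bounds $\|(\check a^L)^{-1}\|_{L^q(\Omega)}$ independently of $L$; consequently the polynomial-times-Hölder bound above is uniform in $L$, giving $\sup_{L}\|u^L\|_{L^p(\Omega;H^{1+s}(\IS^2))}<+\infty$. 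The only mildly delicate point is bookkeeping: one must verify that unrolling the recursion of Proposition~\ref{prop:solution_map_regularity} produces only finitely many monomials with exponents depending on $\lfloor s\rfloor$ (and not on $\omega$), so that a single choice of finite Hölder exponents $q_j$ suffices and the resulting constant is finite and $L$-independent — this is the main, though routine, obstacle, and it is handled by a straightforward induction on $n$.
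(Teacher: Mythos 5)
Your proposal is correct and follows essentially the same route as the paper: strong $H^{1+s}(\IS^2)$-measurability via the continuity of $\Phi_f$ and Lemma~\ref{lemma:comp_meas_maps}, then moment bounds by combining the recursion of Proposition~\ref{prop:solution_map_regularity} with H\"older's inequality and the integrability from Theorem~\ref{thm:logNiGRFsProp} and Corollary~\ref{cor:apint}, uniformly in $L$. The only cosmetic differences are that the paper applies Cauchy--Schwarz at each step of the induction rather than unrolling the recursion into a single polynomial first, and it controls $\|1/a\|_{C^{n,\gg}(\IS^2)}$ more directly by writing $1/a=\exp(-T)$ and applying Theorem~\ref{thm:logNiGRFsProp} to $-T$ instead of invoking a composition estimate for $x\mapsto 1/x$.
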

\begin{proof}
 Let us write $s=\lfloor s \rfloor + \{s\}$, where $\{s\}\in[0,1)$ is the fractional part of $s$,
 and then set $\iota:=\lfloor s \rfloor \in\N_0$ and choose $\gg\in(\{s\}, \min\{\gb/2-\iota,1\})$,
 which implies that $s<\iota+\gg$.
 We deduce that $a,a^L \in L^{p'}(\gO;C^{\iota,\gg}(\IS^2))$ for every $L\in\N_0$, 
$p'\in[1,+\infty)$, from Theorem~\ref{thm:logNiGRFsProp}.
 In particular, these RFs are strongly $C^{\iota,\gg}(\IS^2)$-measurable and positive.
 Hence, by the continuity of the solution map $\Phi_{f}$ from Proposition~\ref{prop:solution_map_regularity}, the mappings
 $u=\Phi_{f}(a)$ and $u^L=\Phi_{f}(a^L)$
 are strongly $H^{1+s}$-measurable for every $L\in\N_0$ (cp.\ ~Lemma~\ref{lemma:comp_meas_maps}).
 
 The boundedness of the $L^p(\Omega;H^{1+s}(\IS^2))$-norm will be proved inductively.
 As a base case we apply the base case estimate of the $H^{1+\{s\}}(\IS^2)$-norm of $u$
 from Proposition~\ref{prop:solution_map_regularity}
 and use the Cauchy--Schwarz inequality to obtain 
 that there exists a constant $C>0$ (independent of $u$, $a$, and $f$)
 such that
 \begin{equation*}
 \|u\|_{L^p(\gO;H^{1+\{s\}}(\IS^2))}
 \leq
 C \|f\|_{H^{-1+\{s\}}(\IS^2)} \|a\|_{L^{2p}(\gO;C^{0,\gg}(\IS^2))} 
\|\check{a}^{-2}\|_{L^{2p}(\gO)}.
 \end{equation*}
We infer from Theorem~\ref{thm:logNiGRFsProp} and Corollary~\ref{cor:apint} 
that the right hand side of the previous inequality
is finite.
Let us assume as induction hypothesis that the 
$L^p(\gO;H^{1+n+\{s\}}(\IS^2))$-norm of $u$ is finite for every 
$n\in\{0,1,\dots,\lfloor s \rfloor -1\}$,
 which we just established for $n=0$.
 Let $n\in\{0,1,\dots,\lfloor s \rfloor -1\}$ and
 let us apply the recursion formula on the $H^{1+(n+1)+\{s\}}(\IS^2)$-norm from Proposition~\ref{prop:solution_map_regularity}
 and apply the Cauchy--Schwarz inequality twice 
 to obtain that there exists a constant $C$ that is independent of $u$, $a$, and $f$ such that
 \begin{align*}
 & \|u\|_{L^p(\gO;H^{1+(n+1)+\{s\}}(\IS^2))}\\
 & \qquad\leq C \|1/a\|_{L^{3p}(\gO;C^{n,\gg}(\IS^2))} \bigl( \|f\|_{H^{1+(n-1)+\{s\}}(\IS^2)}
 + \|a\|_{L^{3p}(\gO;C^{n+1,\gg}(\IS^2))}\|u\|_{L^{3p}(\gO;H^{1+n+\{s\}}(\IS^2))}\bigr).
 \end{align*}
 Since $1/a=\exp(-T)$, Theorem~\ref{thm:logNiGRFsProp} is applicable to
 $-T$, which satisfies~\eqref{eq:AngSpecDec}
 in the same way as $T$ does. 
 Hence, the $L^{3p}(\gO;C^{n,\gg}(\IS^2))$-norm of $1/a$ is finite.
 The induction hypothesis, Theorem~\ref{thm:logNiGRFsProp}, and Corollary~\ref{cor:apint} 
 imply that the right hand side of the previous inequality
 is finite.
 This completes the induction.
 We conclude that the $L^p(\gO;H^{1+s}(\IS^2))$-norm of $u$ is finite. 
 The proof for $u^L$, $L\in\N_0$, is analogous. 
 The uniform boundedness of the $L^p(\gO;H^{1+s}(\IS^2))$-norm of $u^L$ in $L\in\N_0$ is implied
 by Theorem~\ref{thm:logNiGRFsProp} and Corollary~\ref{cor:apint}.
\end{proof}
%
\section{Discretization}\label{sec:Discr}
\subsection{Finite Element Methods}
\label{sec:FEMS2}

In Proposition~\ref{prop:EllSPDE_approx} we analyzed the error 
that occurs when we consider the solution $u^L=\Phi_{f}(a^L)$ 
to the SPDE~\eqref{eq:ellSPDEweak} with respect to the 
approximate isotropic lognormal RF $a^L=\exp(\Pi_L T)$ for $L\in\N_0$,
where $a^L=\exp(\Pi_L T)$ can be simulated via 
the truncated \KL expansion of the iGRF $T$ for every $L\in\N_0$.
In this section we aim at a spatial discretization to numerically 
simulate realizations of $u^L$, $L\in\N_0$,
with a \emph{Galerkin} Finite Element Method and analyze 
the error in the $L^p(\Omega;H^1(\IS^2)/\R)$-norm for $p\in[1,+\infty)$.

We review basic results on the deterministic theory of FEs on $\IS^2$ 
as required in the ensuing analysis.
FEs on surfaces to approximate solutions of elliptic PDEs
appear to have been first introduced in~\cite{Dziuk88}.
There, first order convergence estimates are obtained 
using affine approximations of the surface. 
Higher order estimates are shown in~\cite{Demlow2009}, 
where also an FE Method is defined on the surface 
so as to avoid a surface approximation error. 
We refer to~\cite[Section~2.6]{Demlow2009} for details.

Given a regular, quasiuniform triangulation 
$\cT$ of $\IS^2$ into parametric, curvilinear triangles $K\in \cT$
of mesh width $h > 0$ (which we indicate by tagging $\cT$ with 
the subscript $h$, i.e., by writing $\cT_h$),
we define $S^{k}(\IS^2,\cT_h)$ to be the space of continuous,
piecewise parametric polynomials of degree $k\geq 1$ on 
the triangulation $\cT_h$ of $\IS^2$ 
and equip it with the $H^1(\IS^2)$-norm. 
To approximate functions in $H^1(\IS^2)/\R$ 
we define the subspace of $S^{k}(\IS^2,\cT_h)$ of functions 
that have zero average, 
i.e., 
\begin{equation*}
 V^{h,k} := \{ v^h \in S^{k}(\IS^2,\cT_h), (v^h,1)=0\}
 .
\end{equation*}
Then, $V^{h,k}\subset H^1(\IS^2)/\R$ and we equip it with
the $H^1(\IS^2)/\R$-norm. The FE spaces $S^{k}(\IS^2,\cT_h)$ and $V^{h,k}$, $h>0$, 
are of finite dimension such that ${\rm dim}(S^{k}(\IS^2,\cT_h)) = {\rm dim}(V^{h,k})+1$.
Also it holds that the degrees of freedom 
$N_h := {\rm dim}(V^{h,k}) = \Op(h^{-2})$ as $h\rightarrow 0$ 
for fixed polynomial degree $k\in\N$.
We refer to \cite[Chapter~4]{SaScBEM11} for details and remark that 
we will tag elements of $V^{h,k}$ respectively $S^{k}(\IS^2,\cT_h)$ 
only with the mesh width $h$ keeping in mind that they implicitly
also depend on the polynomial degree $k$ 
of the FE space, i.e., let $v^h \in V^{h,k}$.

For every $\tilde{a}\in C^0_+(\IS^2)$, $h>0$, and $k\in\N$,
we consider the variational formulation of the deterministic, 
elliptic PDE~\eqref{eq:ellPDEweak} 
over the finite dimensional space $V^{h,k}$: 
find a \emph{Galerkin FE solution} 
$\tilde{u}^h\in V^{h,k}$ such that
\begin{equation}\label{eq:ellPDEGalerkin}
 (\tilde{a}\nabla_{\IS^2}\tilde{u}^h, \nabla_{\IS^2} v^h) 
 =
 f(v^h)
 \quad
 \forall v^h \in V^{h,k}
 .
\end{equation}
The conformity of the FE Method, i.e., $V^{h,k}\subset H^1(\IS^2)/\R$, 
implies with~\eqref{eq:bilinearform_continuous} and~\eqref{eq:bilinearform_coercive} 
that the bilinear form $(\tilde{a}\nabla_{\IS^2}\cdot, \nabla_{\IS^2} \cdot)$ on $V^{h,k}\times V^{h,k}$
is continuous and coercive with coercivity constant $(\min_{x\in\IS^2}\tilde{a}(x))^{-1}$ 
which is independent of $h$ and of $k$.

Hence, by the Lax--Milgram lemma, 
the Galerkin approximation $\tilde{u}^h \in V^{h,k}$ 
exists and is the unique solution of~\eqref{eq:ellPDEGalerkin}.
Also $\tilde{u}^h$ satisfies the estimate in~\eqref{est:Pth_H1} uniformly in $h>0$, i.e.,
\begin{equation}\label{est:Pth_H1_FE}
 \|\tilde{u}^h\|_{H^1(\IS^2)/\R}
 \leq 
 \frac{1}{\min_{x\in\IS^2}\tilde{a}(x)}\sqrt{\frac{3}{2}}\|f\|_{H^{-1}(\IS^2)}
 .
\end{equation}
As in the previous section we introduce a solution mapping $\Phi_{f}^{h,k}$ 
that maps the coefficient $\tilde{a}\in  C^0_+(\IS^2)$ 
to the respective unique Galerkin FE solution $\tilde{u}^h\in V^{h,k}$ 
by
\begin{equation*}
\Phi_{f}^{h,k}:  C^0_+(\IS^2) \rightarrow V^{h,k}
.
\end{equation*}
Continuity follows as in Proposition~\ref{prop:solution_map_cont} with Lemma~\ref{lemma:Strang} and is stated in the following proposition.
\begin{proposition}\label{prop:solution_map_cont_FE}
 $\Phi_{f}^{h,k} : C^0_+(\IS^2) \rightarrow V^{h,k}$
 is continuous for every $h>0$ and $k\in\N$.
\end{proposition}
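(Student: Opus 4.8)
The plan is to mirror the argument used for $\Phi_f$ in Proposition~\ref{prop:solution_map_cont}, replacing the global solution space $H^1(\IS^2)/\R$ by its closed subspace $V^{h,k}$, and to invoke the version of Strang's second lemma already stated in Lemma~\ref{lemma:Strang}, which is explicitly formulated to hold for closed, not necessarily strict, subspaces $V\subset H^1(\IS^2)/\R$. First I would fix $h>0$ and $k\in\N$ and note that $V^{h,k}$ is a finite-dimensional, hence closed, subspace of $H^1(\IS^2)/\R$, endowed with the $H^1(\IS^2)/\R$-norm; so Lemma~\ref{lemma:Strang} applies with $V=V^{h,k}$.

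Next I would take $\tilde a_1,\tilde a_2\in C^0_+(\IS^2)$ and set $\tilde u_i^h:=\Phi_f^{h,k}(\tilde a_i)$ for $i=1,2$, which by the Lax--Milgram argument recalled before the statement are the unique elements of $V^{h,k}$ solving $(\tilde a_i\nabla_{\IS^2}\tilde u_i^h,\nabla_{\IS^2}v^h)=f(v^h)$ for all $v^h\in V^{h,k}$. Lemma~\ref{lemma:Strang} then gives directly
\begin{equation*}
 \|\tilde u_1^h-\tilde u_2^h\|_{H^1(\IS^2)/\R}
 \leq
 \sqrt{\tfrac{3}{2}}\,
 \frac{\|f\|_{H^{-1}(\IS^2)}}{(\min_{x\in\IS^2}\tilde a_1(x))(\min_{x\in\IS^2}\tilde a_2(x))}\,
 \|\tilde a_1-\tilde a_2\|_{C^0(\IS^2)}.
\end{equation*}
To turn this Lipschitz-type bound into continuity of $\Phi_f^{h,k}$ at a given $\tilde a_0\in C^0_+(\IS^2)$, I would observe that $\tilde a\mapsto \min_{x\in\IS^2}\tilde a(x)$ is continuous and strictly positive on $C^0_+(\IS^2)$, so on a small $C^0$-ball around $\tilde a_0$ the minimum of $\tilde a$ stays bounded below by, say, $\tfrac12\min_x\tilde a_0(x)>0$; the prefactor in the displayed estimate is therefore uniformly bounded on that ball, and $\|\Phi_f^{h,k}(\tilde a)-\Phi_f^{h,k}(\tilde a_0)\|_{H^1(\IS^2)/\R}\to0$ as $\|\tilde a-\tilde a_0\|_{C^0(\IS^2)}\to0$. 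Since $\tilde a_0$ was arbitrary, $\Phi_f^{h,k}$ is continuous, and since $h$ and $k$ were arbitrary, the claim holds for every $h>0$ and $k\in\N$.

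There is essentially no obstacle here: the only thing to be careful about is that Lemma~\ref{lemma:Strang} is invoked with the subspace $V=V^{h,k}$ rather than the full space, which is permitted by its statement, and that the constant in the resulting bound depends on the coefficients only through the (locally bounded) reciprocals of their minima and not on $h$ or $k$. I would simply remark that the proof is identical to that of Proposition~\ref{prop:solution_map_cont}, now reading Lemma~\ref{lemma:Strang} with $V=V^{h,k}$, and leave it at that, perhaps with the one-line observation about the local lower bound on $\min_x\tilde a(x)$ to make the passage from the Lipschitz estimate to continuity explicit.
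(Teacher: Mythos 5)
Your proposal is correct and follows exactly the paper's route: the paper likewise obtains continuity of $\Phi_f^{h,k}$ by applying Lemma~\ref{lemma:Strang} with the closed subspace $V=V^{h,k}$ and then arguing as in Proposition~\ref{prop:solution_map_cont}. Your additional remark on the local lower bound for $\min_{x\in\IS^2}\tilde a(x)$ just makes explicit the step the paper leaves implicit.
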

Functions in $H^{1+s}(\IS^2)$ 
and in particular solutions to~\eqref{eq:ellPDEweak}
can be approximated in $S^{k}(\IS^2,\cT_h)$, $s,h>0$, and $k\in\N$, cp.\ 
\cite[Proposition~2.7]{Demlow2009}.
We will phrase this in terms of the solution mappings 
$\Phi_{f}$ and $\Phi_{f}^{h,k}$, $h>0$ and $k\in\N$, in the following proposition.
The proof uses this well-known approximation property
of $S^{k}(\IS^2,\cT_h)$, $h>0$ and $k\in\N$,
in $H^1(\IS^2)$ in combination with C\'ea's lemma. 
For details, we refer the reader to Appendix~\ref{app:proof:prop:solution_map_approx_FE}.
\begin{proposition}\label{prop:solution_map_approx_FE}
 Let $k\in\N$ be the polynomial degree of the FE spaces $V^{h,k}$, $h>0$, 
 and let $\iota\in\N_0$ and $\gg\in(0,1)$.
 For every $s\in(0,\iota+\gg)$ such that  $f\in H^{-1+s}(\IS^2)$,
 there exists a constant $C_s$ such that
 for every $h>0$ and every $\tilde{a}\in C^{\iota,\gg}(\IS^2)\cap C^0_+(\IS^2)$,
 it holds that
 \begin{equation*}
 \|\Phi_{f}(\tilde{a}) - \Phi_{f}^{h,k}(\tilde{a})\|_{H^1(\IS^2)/\R}
 \leq
 C_s \; \frac{\|\tilde{a}\|_{C^0(\IS^2)}}{\min_{x\in\IS^2}\tilde{a}(x)} 
\|\Phi_{f}(\tilde{a})\|_{H^{1+s}(\IS^2)} \; h^{\min\{s,k\}}
.
 \end{equation*}
\end{proposition}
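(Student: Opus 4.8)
The plan is to combine C\'ea's lemma with the polynomial approximation property of the FE spaces $S^{k}(\IS^2,\cT_h)$. First I would recall that, since $V^{h,k}\subset H^1(\IS^2)/\R$ is conforming, the bilinear form $(\tilde{a}\nabla_{\IS^2}\cdot,\nabla_{\IS^2}\cdot)$ is continuous with constant $\|\tilde{a}\|_{C^0(\IS^2)}$ and coercive with constant $(\min_{x\in\IS^2}\tilde{a}(x))^{-1}$, both uniformly in $h$ and $k$, by~\eqref{eq:bilinearform_continuous} and~\eqref{eq:bilinearform_coercive}. Writing $\tilde{u}:=\Phi_{f}(\tilde{a})$ and $\tilde{u}^h:=\Phi_{f}^{h,k}(\tilde{a})$, Galerkin orthogonality in the $\tilde{a}$-weighted inner product yields the standard C\'ea estimate
\begin{equation*}
 \|\tilde{u}-\tilde{u}^h\|_{H^1(\IS^2)/\R}
 \leq
 \frac{\|\tilde{a}\|_{C^0(\IS^2)}}{\min_{x\in\IS^2}\tilde{a}(x)}
 \inf_{v^h\in V^{h,k}} \|\tilde{u}-v^h\|_{H^1(\IS^2)/\R}
 .
\end{equation*}
A minor point to address here is that the infimum is over the zero-average subspace $V^{h,k}$ rather than all of $S^{k}(\IS^2,\cT_h)$; I would handle this by subtracting the (constant) average of the chosen $S^{k}$-approximant, which does not change the $H^1(\IS^2)/\R$-seminorm and only improves the $L^2$ term, so the approximation property transfers.

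Next I would invoke the approximation property of $S^{k}(\IS^2,\cT_h)$ in $H^1(\IS^2)$ from~\cite[Proposition~2.7]{Demlow2009}: for $\tilde{u}\in H^{1+s}(\IS^2)$ there is $v^h\in S^{k}(\IS^2,\cT_h)$ with $\|\tilde{u}-v^h\|_{H^1(\IS^2)}\leq C_s\,h^{\min\{s,k\}}\|\tilde{u}\|_{H^{1+s}(\IS^2)}$, the rate being capped at $k$ because the ansatz functions are piecewise polynomials of degree $k$. Here I need $\tilde{u}\in H^{1+s}(\IS^2)$, which holds by Proposition~\ref{prop:solution_map_regularity} since $\tilde{a}\in C^{\iota,\gg}(\IS^2)\cap C^0_+(\IS^2)$, $f\in H^{-1+s}(\IS^2)$, and $s<\iota+\gg$; this is precisely why the hypotheses of the proposition are stated that way. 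Combining the C\'ea estimate with this approximation bound gives
\begin{equation*}
 \|\Phi_{f}(\tilde{a}) - \Phi_{f}^{h,k}(\tilde{a})\|_{H^1(\IS^2)/\R}
 \leq
 C_s\,\frac{\|\tilde{a}\|_{C^0(\IS^2)}}{\min_{x\in\IS^2}\tilde{a}(x)}
 \|\Phi_{f}(\tilde{a})\|_{H^{1+s}(\IS^2)}\,h^{\min\{s,k\}}
 ,
\end{equation*}
which is the claimed inequality.

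The main obstacle, and the only genuinely nonroutine ingredient, is the surface approximation property itself: on the curved manifold $\IS^2$ the elements $K\in\cT_h$ are parametric curvilinear triangles and the FE functions are defined via reference-element pullbacks, so the interpolation estimate is not the flat-domain statement but its manifold analogue from~\cite{Demlow2009}. Once that is taken as given, everything else is the classical C\'ea argument together with the elliptic regularity already established in Proposition~\ref{prop:solution_map_regularity}; the continuity of $\Phi_{f}^{h,k}$ (Proposition~\ref{prop:solution_map_cont_FE}) and the uniform-in-$h$ stability bound~\eqref{est:Pth_H1_FE} are not strictly needed for the error estimate but confirm that $\tilde{u}^h$ is well defined. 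I would relegate the bookkeeping of constants and the transfer of the approximant from $S^{k}$ to $V^{h,k}$ to the appendix, as the excerpt indicates (Appendix~\ref{app:proof:prop:solution_map_approx_FE}).
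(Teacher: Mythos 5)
Your proposal is correct and follows essentially the same route as the paper: C\'ea's lemma with the continuity and coercivity constants from~\eqref{eq:bilinearform_continuous}--\eqref{eq:bilinearform_coercive}, the regularity $\Phi_{f}(\tilde{a})\in H^{1+s}(\IS^2)$ from Proposition~\ref{prop:solution_map_regularity}, and the interpolation estimate of~\cite[Proposition~2.7]{Demlow2009} on the parametric surface triangulation. The only cosmetic difference is how the best approximation is transferred from $S^{k}(\IS^2,\cT_h)$ to the zero-average subspace $V^{h,k}$ — you subtract the mean of the approximant, while the paper uses the orthogonal decomposition $H^1(\IS^2)=H^1(\IS^2)/\R\oplus\Span\{1\}$ — but these are equivalent since the $H^1(\IS^2)/\R$-norm only sees the gradient.
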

Since the mappings $\Phi_{f}^{h,k}$, $h>0,k\in\N$, 
are continuous due to Proposition~\ref{prop:solution_map_cont_FE}, 
the introduced theory on Galerkin FE Methods is applicable to our stochastic framework. 
Indeed, for every $L\in\N_0$, $h>0$, and $k\in\N$, 
the problem to find a strongly $H^1(\IS^2)/\R$-measurable $u^{L,h}$ such that
\begin{equation}\label{eq:ellSPDEweak_FE}
 (a^L \nabla_{\IS^2} u^{L,h}, \nabla_{\IS^2} v^h) 
 = 
 f(v^h)
 \quad \forall v^h\in V^{h,k}
\end{equation}
admits a unique solution 
by setting $u^{L,h}:=\Phi_{f}^{h,k}(a^L)$, 
where we omit $k$ in our notation of the solution.
The strong $H^1(\IS^2)/\R$-measurability of $u^{L,h}$ follows 
from the strong $C^0(\IS^2)$-measurability
of $a^L$ and the continuity of $\Phi_{f}^{h,k}$ with Lemma~\ref{lemma:comp_meas_maps}.
Moreover Corollary~\ref{cor:apint} implies with~\eqref{est:Pth_H1_FE}
that for every $p\in[1,+\infty)$, there exists a constant $C_p$ such that
for every $L\in\N_0$ and every $h>0$, it holds that
\begin{equation}\label{est:Lp_H1_FE}
 \|u^{L,h}\|_{L^p(\gO;H^1(\IS^2)/\R)}
 \leq
 \|1/\check{a}^L\|_{L^p(\gO)}
 \sqrt{\frac{3}{2}}\|f\|_{H^{-1}(\IS^2)}
 \leq C_p 
 .
\end{equation}

With the given properties of the Galerkin Finite Elements, we are now able to prove the extension of Proposition~\ref{prop:EllSPDE_approx} to space discretizations.
\begin{theorem}\label{thm:trunc_hFEM_Conv}
Let the assumptions of Theorem~\ref{thm:PthReg_Sobolev} be satisfied.
Let $u=\Phi_{f}(a)$ be the unique solution of~\eqref{eq:ellSPDEweak} and for every $h>0$, 
let $u^{L,h}=\Phi_{f}^{h,k}(a^L)$ be the unique Galerkin 
FE solution of~\eqref{eq:ellSPDEweak_FE} for $k\in\N$.
Then, for every $s\in(0,\gb/2)$ such that  $f\in H^{-1+s}(\IS^2)$
and every $p\in[1,+\infty)$, there exists a constant $C_{p,s}$ 
such that for every $h>0$ and every $L\in\N_0$, it holds that
\begin{equation*}
\| u  - u^{L,h} \|_{L^p(\Omega;H^1(\IS^2)/\R)} 
\leq 
C_{p,s}
(L^{-s} + h^{\min\{s,k\}})
.
\end{equation*}
\end{theorem}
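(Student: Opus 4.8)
The plan is to decompose the total error by a triangle inequality into a truncation error and a spatial discretization error, then bound each piece separately using results already established. Writing $u = \Phi_f(a)$, $u^L = \Phi_f(a^L)$, and $u^{L,h} = \Phi_f^{h,k}(a^L)$, we first insert the intermediate term $u^L$ and use the triangle inequality in $L^p(\Omega;H^1(\IS^2)/\R)$:
\begin{equation*}
\| u - u^{L,h} \|_{L^p(\Omega;H^1(\IS^2)/\R)}
\leq
\| u - u^L \|_{L^p(\Omega;H^1(\IS^2)/\R)}
+ \| u^L - u^{L,h} \|_{L^p(\Omega;H^1(\IS^2)/\R)}.
\end{equation*}

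For the first term I would apply Proposition~\ref{prop:EllSPDE_approx}, which already gives, for any $\varepsilon\in(0,\gb)$, a bound of order $(\sum_{\ell>L} A_\ell \ell^{1+\varepsilon})^{1/2}$. To convert this into the stated $L^{-s}$ rate, pick $\varepsilon\in(0,\gb)$ with $\varepsilon>2s-1$ (legitimate since $s<\gb/2$ forces $2s-1<\gb-1<\gb$, and if $2s-1\le 0$ any $\varepsilon$ works); then $\sum_{\ell>L} A_\ell \ell^{1+\varepsilon} = \sum_{\ell>L} A_\ell \ell^{1+\gb}\,\ell^{\varepsilon-\gb} \le L^{\varepsilon-\gb}\sum_{\ell>L}A_\ell\ell^{1+\gb} \le C L^{\varepsilon-\gb}$ by assumption~\eqref{eq:AngSpecDec}, whence the square root is $\le C L^{(\varepsilon-\gb)/2} \le C L^{-s}$ since $(\gb-\varepsilon)/2 > s$ by the choice of $\varepsilon$. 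This absorbs the $L^{-s}$ contribution.

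For the second term I would, pointwise in $\omega\in\gO$, apply Proposition~\ref{prop:solution_map_approx_FE} with the coefficient $\tilde a = a^L(\omega)$ (which lies in $C^{\iota,\gg}(\IS^2)\cap C^0_+(\IS^2)$ for a suitable $\iota,\gg$ with $s<\iota+\gg<\gb/2$ by Theorem~\ref{thm:logNiGRFsProp}), obtaining
\begin{equation*}
\| u^L - u^{L,h} \|_{H^1(\IS^2)/\R}
\leq C_s\,\frac{\hat a^L}{\check a^L}\,\|u^L\|_{H^{1+s}(\IS^2)}\,h^{\min\{s,k\}}.
\end{equation*}
Taking the $L^p(\gO)$-norm of both sides and applying H\"older's inequality to split the product $\hat a^L (\check a^L)^{-1}\|u^L\|_{H^{1+s}(\IS^2)}$ into three factors in $L^{3p}(\gO)$ (or $L^{3p}(\gO;H^{1+s}(\IS^2))$), each of these factors is finite and bounded uniformly in $L$: the two coefficient factors by Corollary~\ref{cor:apint}, and $\|u^L\|_{L^{3p}(\gO;H^{1+s}(\IS^2))}$ by Theorem~\ref{thm:PthReg_Sobolev} (which also gives uniform-in-$L$ boundedness, and uses $f\in H^{-1+s}(\IS^2)$). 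This yields $\| u^L - u^{L,h} \|_{L^p(\Omega;H^1(\IS^2)/\R)} \le C_{p,s}\,h^{\min\{s,k\}}$ uniformly in $L$ and $h$.

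Combining the two bounds gives the claim with $C_{p,s}$ the maximum of the two constants. I do not expect a serious obstacle here: the statement is essentially a bookkeeping combination of Proposition~\ref{prop:EllSPDE_approx}, Proposition~\ref{prop:solution_map_approx_FE}, Theorem~\ref{thm:PthReg_Sobolev}, and Corollary~\ref{cor:apint}. The only point requiring a little care is the conversion of the spectral-tail quantity $(\sum_{\ell>L}A_\ell\ell^{1+\varepsilon})^{1/2}$ into the clean algebraic rate $L^{-s}$, which needs the explicit choice of $\varepsilon$ strictly between $\max\{0,2s-1\}$ and $\gb$ as above; the feasibility of this choice is exactly what the hypothesis $s<\gb/2$ guarantees. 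One should also make sure the regularity indices $\iota,\gg$ used for applying Proposition~\ref{prop:solution_map_approx_FE} are chosen consistently (e.g.\ $\iota=\lfloor s\rfloor$, $\gg\in(\{s\},\min\{\gb/2-\iota,1\})$ as in the proof of Theorem~\ref{thm:PthReg_Sobolev}) so that $a^L\in L^{3p}(\gO;C^{\iota,\gg}(\IS^2))$ with norm uniform in $L$.
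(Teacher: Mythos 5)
Your decomposition and the ingredients you invoke are exactly those of the paper's proof: insert $u^L=\Phi_f(a^L)$, bound $\|u-u^L\|$ by Proposition~\ref{prop:EllSPDE_approx}, and bound $\|u^L-u^{L,h}\|$ by applying Proposition~\ref{prop:solution_map_approx_FE} pathwise and splitting the prefactor into three $L^{3p}$ factors controlled by Theorem~\ref{thm:logNiGRFsProp}, Corollary~\ref{cor:apint}, and Theorem~\ref{thm:PthReg_Sobolev}. The only flaw is your stated admissibility condition on $\varepsilon$: you require $\varepsilon>2s-1$ and assert that this choice yields $(\gb-\varepsilon)/2>s$, but that implication is false (e.g.\ $\gb=1$, $s=0.4$, $\varepsilon=0.5$ satisfies $\varepsilon>2s-1$ yet $(\gb-\varepsilon)/2=0.25<s$). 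The bound $L^{(\varepsilon-\gb)/2}\leq L^{-s}$ needs $\varepsilon\leq\gb-2s$, not $\varepsilon>2s-1$; the paper simply takes $\varepsilon=\gb-2s$, and the feasibility of this choice is what $s<\gb/2$ actually guarantees. With that one-line correction your argument coincides with the paper's.
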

\begin{proof}
 Let us set $u^L:=\Phi_{f}(a^L)$ for every $L\in\N_0$.
 A twofold 
 application of H\"older's inequality implies
 with Proposition~\ref{prop:solution_map_approx_FE}
 that there exists a constant $C_s$
 such that for every $L\in\N_0$ and every $h>0$, it holds that
 \begin{equation*}
  \|u^L - u^{L,h}\|_{L^p(\gO;H^1(\IS^2)/\R)}
  \leq 
  C_s \|a^L\|_{L^{3p}(\gO;C^0(\IS^2))} \|1/\check{a}^L\|_{L^{3p}(\gO)} \|u^L\|_{L^{3p}(\gO;H^{1+s}(\IS^2))}
  \; h^{\min\{s,k\}}
  .
 \end{equation*}
 Due to Theorem~\ref{thm:logNiGRFsProp}, Corollary~\ref{cor:apint}, and Theorem~\ref{thm:PthReg_Sobolev}
 there exists a constant $\hat{C}_{p,s}$ such that for every $L\in\N_0$, it holds that
 \begin{equation*}
 C_s\|a^L\|_{L^{3p}(\gO;C^0(\IS^2))} \|1/\check{a}^L\|_{L^{3p}(\gO)} \|u^L\|_{L^{3p}(\gO;H^{1+s}(\IS^2))}\leq C_{p,s}.
 \end{equation*}
 Let $\varepsilon:=\gb-2s\in(0,\gb)$. 
We apply the triangle inequality and conclude with Proposition~\ref{prop:EllSPDE_approx} that
there exists a constant that we also denote by $\hat{C}_{p,s}$ such that for every $L\in\N_0$
and every $h>0$, it holds that
\begin{equation*}
 \|u-u^{L,h}\|_{L^p(\Omega;H^1(\IS^2))}
 \leq \hat{C}_{p,s}\Bigl ( \sum_{\ell>L} A_\ell \ell^{1+\varepsilon} \Bigr)^{\frac{1}{2}}
 +\hat{C}_{p,s}\; h^{\min\{s,k\}}.
\end{equation*}
We further bound
\begin{equation*}
 \sum_{\ell>L} A_\ell \ell^{1+\varepsilon}
 \leq (L^{-1})^{\gb-\varepsilon}\sum_{\ell>L} A_\ell \ell^{1+\gb}
 \leq (L^{-1})^{2s}\sum_{\ell\geq0} A_\ell \ell^{1+\gb}.
\end{equation*}
Since $\sum_{\ell\geq0} A_\ell \ell^{1+\gb}<+\infty$ by assumption, we conclude the proof of the theorem.
\end{proof}
%

%
\subsection{Spectral Methods}
\label{sec:SpecElS2}
In Theorem~\ref{thm:trunc_hFEM_Conv} we established a rate of convergence
for Galerkin approximations of the stochastic solution 
in subspaces $V^{h,k}$ of continuous, piecewise 
polynomial functions on a quasiuniform triangulation $\cT_h$ on $\IS^2$.
The obtained bound for the convergence rate in Theorem~\ref{thm:trunc_hFEM_Conv} indicated 
an asymptotic convergence order $N_h^{-\min\{s,k\}/2}$ as 
$N_h = {\rm dim}(V^{h,k}) \rightarrow +\infty$, i.e., the convergence rate
is limited by the regularity of the solutions (as expressed in the 
Sobolev scale parameter $s \geq 0$) and by the polynomial degree 
$k\in\N$ of the Finite Elements used in the discretization. 
If, in particular, the Sobolev regularity of the solution is high, 
i.e., if $s > 0$ is large, the convergence of the 
Galerkin FE approximations $u^{L,h}$ defined in~\eqref{eq:ellSPDEweak_FE} is limited 
by the order $k$ of the used Finite Elements. 
Spectral Elements do not have this drawback.

To introduce them, we recall the space ${\cH_{0:L^u}}\subset H^1(\IS^2)$ 
spanned by spherical harmonics of order at most $L^u$ defined in~\eqref{eq:YL}.
Since we are interested in a conforming method, 
we restrict ourselves to the functions that are orthogonal to constants
as in the FE case, i.e., we consider $\cH_{1:L^u}$ as Spectral Element spaces, $L^u\in\N$.
In the following 
the index $L^a$ refers to the degree of the approximation of 
$a$ and $L^u$ refers to the degree of the Spectral Element space.
Its dimension is $N_{L^{u}} := {\rm dim}({\cH_{1:L^u}}) = \Op( (L^{u})^2 )$ as $L^{u}\rightarrow +\infty$, 
and is also referred to as degrees of freedom.
Let $a=\exp(T)$ be an isotropic lognormal RF that results from a continuous iGRF $T$
satisfying~\eqref{eq:AngSpecDec} for some $\beta>0$.
Similarly to~\eqref{eq:ellSPDEweak_FE}, for every $L^a,L^u\in\N_0$, 
we define a Galerkin approximation as the solution of the problem to
find a strongly $H^1(\IS^2)/\R$-measurable $u^{L^a,L^{u}}$ that takes values in $\cH_{1:L^u}$
such that
\begin{equation*}
(a^{L^a} \nabla_{\IS^2} u^{L^a,L^{u}}, \nabla_{\IS^2} v^{L^{u}})  
= 
f(v^{L^{u}})
\quad \forall v^{L^{u}} \in \cH_{1:L^u}
.
\end{equation*}
The coercivity
of the bilinear form $(a^{L^a} \nabla_{\IS^2} \cdot, \nabla_{\IS^2} \cdot)$ implies that
$u^{L^a,L^{u}}$ exists and is unique,
since $\cH_{1:L^u} \subset H^1(\IS^2)/\R$ is a closed subspace.
Strong $H^1(\IS^2)/\R$-measurability of $u^{L^a,L^{u}}$ 
follows in the same way as in Section~\ref{sec:FEMS2}.

Let us conclude this subsection with the spectral version of Theorem~\ref{thm:trunc_hFEM_Conv} which expresses the convergence rate just in terms of the Sobolev regularity of the solution of the original problem.
\begin{theorem}\label{thm:ellSPDEN}
Let the assumptions of Theorem~\ref{thm:PthReg_Sobolev} be satisfied.
For every $s\in(0,\gb/2)$ such that $f\in H^{-1+s}(\IS^2)$ and 
for every $p\in[1,+\infty)$,
there exists a constant $C_{p,s}$ such that for every $L^a,L^u\in\N_0$, it holds that
\begin{equation*}
\| u  - u^{L^a,L^u} \|_{L^p(\Omega;H^1(\IS^2)/\R)}
\leq
C_{p,s} \Bigl( (L^a)^{-s} + (L^u)^{-s}\Bigr).
\end{equation*}
\end{theorem}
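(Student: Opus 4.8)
The plan is to mirror the structure of the proof of Theorem~\ref{thm:trunc_hFEM_Conv}, splitting the total error into a coefficient-truncation part and a spatial-discretization part via the triangle inequality. Writing $u^{L^a}:=\Phi_{f}(a^{L^a})$ for the solution of~\eqref{eq:ellSPDEweak_truncated} at truncation level $L^a$, I would estimate
\begin{equation*}
\|u - u^{L^a,L^u}\|_{L^p(\gO;H^1(\IS^2)/\R)}
\leq
\|u - u^{L^a}\|_{L^p(\gO;H^1(\IS^2)/\R)}
+ \|u^{L^a} - u^{L^a,L^u}\|_{L^p(\gO;H^1(\IS^2)/\R)}.
\end{equation*}
The first term is handled exactly as in Theorem~\ref{thm:trunc_hFEM_Conv}: choosing $\varepsilon:=\gb-2s\in(0,\gb)$, Proposition~\ref{prop:EllSPDE_approx} together with the bound $\sum_{\ell>L^a}A_\ell\ell^{1+\varepsilon}\leq (L^a)^{-2s}\sum_{\ell\geq0}A_\ell\ell^{1+\gb}$ and the assumption~\eqref{eq:AngSpecDec} gives a bound of order $(L^a)^{-s}$.

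For the second (spatial) term I need the spectral analogue of Proposition~\ref{prop:solution_map_approx_FE}. The key point is C\'ea's lemma: since $\cH_{1:L^u}\subset H^1(\IS^2)/\R$ is a closed subspace and the bilinear form $(a^{L^a}\nabla_{\IS^2}\cdot,\nabla_{\IS^2}\cdot)$ is continuous and coercive on it with constants $\|a^{L^a}\|_{C^0(\IS^2)}$ and $(\check a^{L^a})^{-1}$ from~\eqref{eq:bilinearform_continuous} and~\eqref{eq:bilinearform_coercive}, the Galerkin solution is quasi-optimal:
\begin{equation*}
\|u^{L^a} - u^{L^a,L^u}\|_{H^1(\IS^2)/\R}
\leq \frac{\|a^{L^a}\|_{C^0(\IS^2)}}{\check a^{L^a}}
\inf_{v\in\cH_{1:L^u}} \|u^{L^a}-v\|_{H^1(\IS^2)/\R}.
\end{equation*}
The best approximation is controlled by Proposition~\ref{prop:FAppr} applied with $t=1+s$ and the $H^1$-target: since $u^{L^a}$ has zero average its harmonics expansion starts at $\ell=1$, so $\Pi_{L^u}u^{L^a}\in\cH_{1:L^u}$ and $\|u^{L^a}-\Pi_{L^u}u^{L^a}\|_{H^1(\IS^2)}\leq (L^u)^{-s}\|u^{L^a}\|_{H^{1+s}(\IS^2)}$ (one should also note $\|\cdot\|_{H^1(\IS^2)/\R}\leq\|\cdot\|_{H^1(\IS^2)}$). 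Combining and taking $L^p(\gO)$-norms, a twofold application of H\"older's inequality yields
\begin{equation*}
\|u^{L^a}-u^{L^a,L^u}\|_{L^p(\gO;H^1(\IS^2)/\R)}
\leq \|a^{L^a}\|_{L^{3p}(\gO;C^0(\IS^2))}\,\|1/\check a^{L^a}\|_{L^{3p}(\gO)}\,\|u^{L^a}\|_{L^{3p}(\gO;H^{1+s}(\IS^2))}\,(L^u)^{-s},
\end{equation*}
and the three factors are bounded uniformly in $L^a$ by Theorem~\ref{thm:logNiGRFsProp}, Corollary~\ref{cor:apint}, and Theorem~\ref{thm:PthReg_Sobolev} respectively. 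Adding the two contributions gives the asserted bound $C_{p,s}((L^a)^{-s}+(L^u)^{-s})$.

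The main obstacle is really a bookkeeping one rather than a conceptual one: one must make sure the constant in the quasi-optimality/best-approximation step is genuinely independent of $L^a$, which hinges on the $L$-uniform bounds in Theorem~\ref{thm:logNiGRFsProp}, Corollary~\ref{cor:apint}, and the uniform-in-$L$ Sobolev bound on $u^{L^a}$ from Theorem~\ref{thm:PthReg_Sobolev} — so the verification is that all of these apply with $a^{L^a}$ in place of $a$ (which they do, since $\Pi_L T$ satisfies~\eqref{eq:AngSpecDec} with the same $\gb$). One also needs that the two parameters $L^a$ and $L^u$ can be sent to infinity independently, which is automatic here because each error contribution involves only one of them. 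A minor technical point is the case $L^u=0$ (where $\cH_{1:0}$ is trivial), which is covered since the bound is vacuous after enlarging the constant. Everything else is a direct transcription of the Finite Element argument with $S^k(\IS^2,\cT_h)$ replaced by $\cH_{1:L^u}$ and Proposition~\ref{prop:solution_map_approx_FE} replaced by the Proposition~\ref{prop:FAppr}-based estimate above.
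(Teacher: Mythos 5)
Your proposal is correct and follows essentially the same route as the paper: the paper's own proof is a sketch that cites exactly the ingredients you use — the triangle-inequality splitting, Proposition~\ref{prop:EllSPDE_approx} for the coefficient truncation, quasi-optimality (C\'ea) for the Galerkin error, the approximation property of Proposition~\ref{prop:FAppr}, and the uniform-in-$L$ bounds from Theorem~\ref{thm:PthReg_Sobolev}, Theorem~\ref{thm:logNiGRFsProp}, and Corollary~\ref{cor:apint}. Your additional remarks (zero mean of $u^{L^a}$ so that $\Pi_{L^u}u^{L^a}\in\cH_{1:L^u}$, and uniformity of the constants in $L^a$) correctly fill in the details the paper leaves implicit.
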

\begin{proof} The proof is similar to that of Theorem~\ref{thm:trunc_hFEM_Conv}.
We use the approximation result Proposition~\ref{prop:EllSPDE_approx}, the quasioptimality, 
the regularity result 
Theorem~\ref{thm:PthReg_Sobolev}, and the approximation property of 
$\cH_{0:L^u}$ in Proposition~\ref{prop:FAppr} to conclude the assertion.
\end{proof}
\section{MLMC convergence analysis}

\label{sec:MLMC}
In this section we aim at approximating the expectation of the solution of~\eqref{eq:ellSPDEweak} $\E(u)$.
So far we established for FE approximations of~$u$ in Section~\ref{sec:FEMS2} 
that the constructed double indexed sequence
$(u^{L,h},L\in\N_0, h>0)$ converges to $u$ in $L^p(\gO;H^1(\IS^2)/\R)$ for every $p\in[1,+\infty)$ with 
a particular convergence rate, cp.\ Theorem~\ref{thm:trunc_hFEM_Conv}. 
The remaining part of the numerical analysis is to approximate 
$\E(u^{L,h})$ for $L\in\N_0$ and $h>0$ with a sampling method.
To this end, we apply an MLMC estimator in order to reduce the computational cost that a 
conventional Monte Carlo simulation would incur. 

The error analysis of MLMC discretizations is standard, by now, and 
our development is analogous to
those carried out in~\cite{GKSS_MLMC13, BSZ11, BL12_2}. 
In particular, in~\cite{GKSS_MLMC13} the error from truncating a \KL expansion
of the Gaussian random field was considered. 
In contrast to the situation there, 
we will benefit in our analysis from the knowledge of the properties
of iGRFs and of the behavior of their \KL expansions that we 
developed in Section~\ref{sec:iGRF}. 
This relieves us from additional assumptions on the \KL eigenfunctions, 
on the behavior of the truncated
\KL expansion, and on the iGRF itself,
apart from summability assumptions on the angular power spectrum.

We introduce the usual Monte Carlo (MC) estimator and the MLMC estimator in a general setting.
Let $(V,\|\cdot\|_V)$ be a separable Hilbert space.
For every $v\in L^2(\gO;V)$, let  
$(\hat{v}_i,i\in\N)$ be a sequence of independent, identically distributed 
random variables in~$L^2(\gO;V)$ 
such that they are independent from~$v$ and have the same law as $v$.
For every $M\in\N$, the MC estimator~$E_M(v)$ of~$v$ is then defined by
\begin{equation*}
 E_M ( v ) := \frac{1}{M} \sum_{i=1}^M \hat{v}_i
 .
\end{equation*}
It is well-known that for every $v\in L^2(\gO;V)$ and every $M\in\N$, it holds that
\begin{equation}\label{eq:MC_estimator_var}
 \|\E(v) - E_M(v)\|^2_{L^2(\gO;V)}
 =
 \frac{1}{M} \| v - \E(v)\|^2_{L^2(\gO;V)}
 =
  \frac{1}{M} ( \| v \|^2_{L^2(\gO;V)} - \|\E(v)\|^2_{V} )
 .
\end{equation}
For every $L^2(\gO;V)$-valued sequence $(v^j,j\in\N_0)$, 
we consider a finite telescoping sum expansion with the convention that $v^{-1}=0$,
i.e., for every $J'\in\N_0$, it holds that
\begin{equation*}
 v^{J'}=\sum_{j=0}^{J'} v^j - v^{j-1}
 ,
 \end{equation*}
and define for every $\N$-valued sequence $(M_j,j=0,\dots,J)$, $J\in\N_0$, 
the MLMC estimator $E^J$ of $v^J$ by
\begin{equation}\label{eq:MLMC_est_def}
 E^J(v^J) := \sum_{j=0}^J E_{M_j}(v^{j} - v^{j-1})
\end{equation}
such that the MC estimators $(E_{M_j}(v^{j} - v^{j-1}),j=0,\dots,J)$ are independent.

In the following lemma we express the error introduced by the MLMC estimator in terms of the errors of the approximations and the numbers of samples chosen on each level.
\begin{lemma}\label{lemma:MLMC_general_estimate}
 For every $L^2(\gO;V)$-valued sequence $(v^j,j\in\N_0)$ and every integer-valued $(M_j,j=0,\dots,J)$ sequence with finite $J\in\N_0$, the MLMC estimator $E^J(v^J)$ satisfies that
\begin{equation*}
\|\E(v^J) - E^J(v^J)\|^2_{L^2(\gO;V)}
=
\sum_{j=0}^J\frac{1}{M_j}\Bigl(\| v^j - v^{j-1}\|^2_{L^2(\gO;V)} - \|\E(v^j - v^{j-1})\|^2_V\Bigr)
.
\end{equation*}
\end{lemma}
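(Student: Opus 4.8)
The plan is to expand the error of the MLMC estimator as a sum over levels, exploit the independence of the level estimators to kill all cross terms, and then apply the single-level Monte Carlo identity \eqref{eq:MC_estimator_var} to each surviving term. First I would write
\begin{equation*}
\E(v^J) - E^J(v^J)
= \sum_{j=0}^J \Bigl( \E(v^j - v^{j-1}) - E_{M_j}(v^j - v^{j-1}) \Bigr)
=: \sum_{j=0}^J Z_j ,
\end{equation*}
using the telescoping identity $v^J = \sum_{j=0}^J (v^j - v^{j-1})$ with $v^{-1}=0$, the definition \eqref{eq:MLMC_est_def} of $E^J$, and linearity of expectation. Each $Z_j \in L^2(\gO;V)$ is centered, i.e.\ $\E(Z_j)=0$, since $E_{M_j}$ is an unbiased estimator of $\E(v^j - v^{j-1})$.

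Next I would compute $\|\sum_{j=0}^J Z_j\|_{L^2(\gO;V)}^2 = \E(\|\sum_j Z_j\|_V^2)$ by expanding the inner product in $V$:
\begin{equation*}
\Bigl\| \sum_{j=0}^J Z_j \Bigr\|_{L^2(\gO;V)}^2
= \sum_{j=0}^J \|Z_j\|_{L^2(\gO;V)}^2
+ \sum_{\substack{i,j=0 \\ i\neq j}}^J \E\bigl( (Z_i, Z_j)_V \bigr) .
\end{equation*}
For $i \neq j$, the MC estimators $E_{M_i}(v^i - v^{i-1})$ and $E_{M_j}(v^j - v^{j-1})$ are independent by construction of the MLMC estimator, hence so are $Z_i$ and $Z_j$; since both are centered $V$-valued random variables, $\E((Z_i,Z_j)_V) = (\E(Z_i), \E(Z_j))_V = 0$. (Here one uses separability of $V$ so that Bochner integration commutes with the inner product, which is the only point where the Hilbert-space hypothesis is genuinely used.) This leaves $\|\E(v^J) - E^J(v^J)\|_{L^2(\gO;V)}^2 = \sum_{j=0}^J \|Z_j\|_{L^2(\gO;V)}^2$.

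Finally I would apply \eqref{eq:MC_estimator_var} with $v$ replaced by $v^j - v^{j-1}$ and $M$ by $M_j$ to each term, giving
\begin{equation*}
\|Z_j\|_{L^2(\gO;V)}^2
= \|\E(v^j - v^{j-1}) - E_{M_j}(v^j - v^{j-1})\|_{L^2(\gO;V)}^2
= \frac{1}{M_j}\Bigl( \|v^j - v^{j-1}\|_{L^2(\gO;V)}^2 - \|\E(v^j - v^{j-1})\|_V^2 \Bigr),
\end{equation*}
and summing over $j = 0, \dots, J$ yields the claimed identity. There is no real obstacle here: the lemma is essentially bookkeeping, and the only thing to be careful about is justifying that the cross terms vanish, i.e.\ that independence plus centeredness of the $V$-valued summands implies $\E((Z_i,Z_j)_V)=0$ — which follows from Fubini/Bochner arguments valid in a separable Hilbert space. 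The telescoping-sum bookkeeping and the correct pairing of indices $M_j \leftrightarrow (v^j - v^{j-1})$ are the only places a sign or index slip could creep in.
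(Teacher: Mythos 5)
Your proposal is correct and follows essentially the same route as the paper: telescoping decomposition, independence of the level estimators to eliminate cross terms, and then the single-level identity~\eqref{eq:MC_estimator_var} applied levelwise. You merely spell out the orthogonality of the centered, independent $V$-valued summands, which the paper's proof compresses into a single appeal to independence.
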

\begin{proof}
 The independence of the MC estimators in~\eqref{eq:MLMC_est_def}
 on the different levels $(M_j,j=0,\dots,J)$ 
and~\eqref{eq:MC_estimator_var} imply that
 \begin{align*}
  \|\E(v^J) - E^J(v^J)\|^2_{L^2(\gO;V)}
  &=
  \|\sum_{j=0}^J \E(v^j - v^{j-1}) - E_{M_j}(v^j - v^{j-1})\|^2_{L^2(\gO;V)}\\
  &=
  \sum_{j=0}^J\|\E(v^j - v^{j-1}) - E_{M_j}(v^j - v^{j-1})\|^2_{L^2(\gO;V)}\\ 
  &=
   \sum_{j=0}^J\frac{1}{M_j}(\| v^j - v^{j-1}\|^2_{L^2(\gO;V)} - \|\E(v^j - v^{j-1})\|^2_V)
   .
   \qedhere
 \end{align*}
\end{proof}
%
%
%
%
After having computed the error introduced by an MLMC estimator, 
we are now in state to compute the overall error of the full discretization in terms of the regularity of the solution, the approximation of the iGRF, the FE discretization, and the sample sizes.
\begin{theorem}\label{thm:MLMC_error_est_u^L_h}
 Let the assumptions of Theorem~\ref{thm:trunc_hFEM_Conv} be satisfied
 and let $u$ be the unique solution to~\eqref{eq:ellSPDEweak}.
 Consider for any increasing $\N$-valued sequence $(L_j,j\in\N_0)$ and
 decreasing positive sequence $(h_j,j\in\N_0)$ the corresponding sequence of FE solutions $(u^{L_j,h_j},j\in\N_0)$ to~\eqref{eq:ellSPDEweak_FE},
 i.e., for fixed $k\in\N$ and for every $j\in\N_0$, $u^{L_j,h_j}$ satisfies
 \begin{equation*}
  (a^{L_j}\nabla_{\IS^2} u^{L_j,h_j},\nabla_{\IS^2}v^{h_j}) 
  = 
  f(v^{h_j})
  \qquad 
  \forall v^{h_j} \in V^{h_j,k}
  .
 \end{equation*}
 Then, for every $s\in(0,\gb/2)$, 
 there exists a constant $C_s$ such that for every $\N$-valued sequence $(M_j,j=0.\dots,J)$, $J\in\N_0$, it holds that 
 \begin{equation*}
  \|\E(u) - E^J(u^{L_J,h_J})\|_{L^2(\gO;H^1(\IS^2)/\R)}
  \leq C_s\Bigl( \frac{1}{M_0} + \sum_{j=1}^{J} \frac{L_{j-1}^{-2s} + h_{j-1}^{2\min\{s,k\}}}{M_{j}} + L_J^{-2s} + h_J^{2\min\{s,k\}} \Bigr)^{1/2}
  .
 \end{equation*}
\end{theorem}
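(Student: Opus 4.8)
The plan is to combine the triangle inequality splitting of the full error into a sampling part and a deterministic discretization part, and then to apply Lemma~\ref{lemma:MLMC_general_estimate} to the former together with Theorem~\ref{thm:trunc_hFEM_Conv} to the latter. Concretely, I would first write
\begin{equation*}
 \|\E(u) - E^J(u^{L_J,h_J})\|_{L^2(\gO;H^1(\IS^2)/\R)}
 \leq
 \|\E(u) - \E(u^{L_J,h_J})\|_{L^2(\gO;H^1(\IS^2)/\R)}
 +
 \|\E(u^{L_J,h_J}) - E^J(u^{L_J,h_J})\|_{L^2(\gO;H^1(\IS^2)/\R)}.
\end{equation*}
The first summand is deterministic and bounded by $\|u - u^{L_J,h_J}\|_{L^2(\gO;H^1(\IS^2)/\R)}$, hence by $C_{2,s}(L_J^{-s} + h_J^{\min\{s,k\}})$ via Theorem~\ref{thm:trunc_hFEM_Conv} with $p=2$; squaring this contributes the terms $L_J^{-2s} + h_J^{2\min\{s,k\}}$ (up to a constant and a factor $2$ from $(x+y)^2 \le 2x^2+2y^2$).

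For the sampling term I would apply Lemma~\ref{lemma:MLMC_general_estimate} with $V = H^1(\IS^2)/\R$ and $v^j = u^{L_j,h_j}$, which gives
\begin{equation*}
 \|\E(u^{L_J,h_J}) - E^J(u^{L_J,h_J})\|^2_{L^2(\gO;H^1(\IS^2)/\R)}
 =
 \sum_{j=0}^J \frac{1}{M_j}\Bigl(\|v^j - v^{j-1}\|^2_{L^2(\gO;H^1(\IS^2)/\R)} - \|\E(v^j - v^{j-1})\|^2_{H^1(\IS^2)/\R}\Bigr)
 \leq
 \sum_{j=0}^J \frac{1}{M_j}\|v^j - v^{j-1}\|^2_{L^2(\gO;H^1(\IS^2)/\R)},
\end{equation*}
dropping the nonnegative $\|\E(\cdot)\|^2$ terms. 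The $j=0$ term is $M_0^{-1}\|u^{L_0,h_0}\|^2_{L^2(\gO;H^1(\IS^2)/\R)}$, which is bounded by a constant uniformly by~\eqref{est:Lp_H1_FE} (with $p=2$); this yields the $1/M_0$ contribution. For $j \ge 1$, I would insert $u$ by the triangle inequality, $\|v^j - v^{j-1}\|_{L^2} \le \|u - v^j\|_{L^2} + \|u - v^{j-1}\|_{L^2}$, square using $(x+y)^2\le 2x^2+2y^2$, and apply Theorem~\ref{thm:trunc_hFEM_Conv} to each term; since the sequences $(L_j)$ and $(h_j)$ are increasing resp.\ decreasing, the level-$j$ bound $L_j^{-2s}+h_j^{2\min\{s,k\}}$ is dominated by the level-$(j-1)$ bound $L_{j-1}^{-2s}+h_{j-1}^{2\min\{s,k\}}$, so the term is $\le C (L_{j-1}^{-2s} + h_{j-1}^{2\min\{s,k\}})/M_j$. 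Collecting the three pieces, taking a common constant $C_s$ (depending on $s$ through $C_{2,s}$ and the uniform bound in~\eqref{est:Lp_H1_FE}), and taking the square root gives exactly the claimed estimate.

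The only genuine subtlety — and the step I would flag as the main obstacle — is making sure all the norms in sight are finite so that Lemma~\ref{lemma:MLMC_general_estimate} applies: one needs $u^{L_j,h_j} \in L^2(\gO; H^1(\IS^2)/\R)$, which is guaranteed by~\eqref{est:Lp_H1_FE}, and one needs $u \in L^2(\gO;H^1(\IS^2)/\R)$, which is Theorem~\ref{thm:ExEllSPDE}. Beyond that, the proof is bookkeeping: the telescoping identity of the lemma does the conceptual work, and the monotonicity of $(L_j)$ and $(h_j)$ is what lets the level-$j$ error be charged to the level-$(j-1)$ bound appearing in the statement. I would also remark that no independence between the Monte Carlo samples on different levels beyond what is built into the definition~\eqref{eq:MLMC_est_def} of $E^J$ is needed, since Lemma~\ref{lemma:MLMC_general_estimate} already encapsulates it.
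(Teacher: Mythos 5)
Your proposal is correct and follows essentially the same route as the paper's proof: split off the bias term $\|\E(u)-\E(u^{L_J,h_J})\|$ via the triangle inequality and bound it with Theorem~\ref{thm:trunc_hFEM_Conv}, apply Lemma~\ref{lemma:MLMC_general_estimate} to the sampling term, control the level differences by inserting $u$ and using the monotonicity of $(L_j)$ and $(h_j)$, and bound the $j=0$ term by~\eqref{est:Lp_H1_FE}. The only cosmetic difference is that you estimate the bias in $L^2(\gO)$ where the paper passes through the $L^1(\gO)$-norm, which changes nothing.
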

\begin{proof}
Theorem~\ref{thm:trunc_hFEM_Conv} implies 
that there exists a constant $\hat{C}_s$ independent
of $(L_j,j\in\N_0)$, $(h_j,j\in\N_0)$, and $J$ such that 
for every $j=1,\dots,J$, it holds that
\begin{align*}
&\| u^{L_j,h_j} - u^{L_{j-1},h_{j-1}}\|_{L^2(\gO;H^1(\IS^2)/\R)} \\
&\qquad\leq
\| u - u^{L_{j},h_{j}}\|_{L^2(\gO;H^1(\IS^2)/\R)} 
+
\| u - u^{L_{j-1},h_{j-1}}\|_{L^2(\gO;H^1(\IS^2)/\R)}\\
&\qquad\leq 
\hat{C}_s(L_{j}^{-s} + h_{j}^{\min\{s,k\}} + L_{j-1}^{-s} + h_{j-1}^{\min\{s,k\}})
\leq 2\hat{C}_s(L_{j-1}^{-s} + h_{j-1}^{\min\{s,k\}} )
,
\end{align*}
where we apply that $(L_j,j\in\N_0)$ is increasing and 
 $(h_j,j\in\N_0)$ is decreasing and recall 
that the elements of $(u^{L_{j},h_{j}}$, $j\in\N_0)$ depend on the polynomial
 degree $k$ of $V^{h,k}$.
 Another implication of Theorem~\ref{thm:trunc_hFEM_Conv}
 is that for the same constant $\hat{C}_s$, it holds that
\begin{equation*}
\|\E(u) - \E(u^{L_J,h_J})\|_{H^1(\IS^2)/\R}
\leq \|u - u^{{L_J,h_J}}\|_{L^1(\gO;H^1(\IS^2))}
\leq \hat{C}_s(L_J^{-s} + h_J^{\min\{s,k\}}),
\end{equation*}
and due to~\eqref{est:Lp_H1_FE} there exists a constant $\hat{C}$ that is independent of $L_0$ and $h_0$ such that
\begin{equation*}
\| u^{L_0,h_0}\|_{L^2(\gO;H^1(\IS^2)/\R)}
\leq \hat{C}
.
\end{equation*}
Hence, we conclude the claim of this theorem with the triangle inequality,
 Lemma~\ref{lemma:MLMC_general_estimate}, 
and with the elementary inequality that 
$(r_1+r_2)^2\leq 2(r_1^2 +r_2^2)$ for every $r_1,r_2\in\R$.
Specifically, for $C_s:=4\max\{\hat{C}_s,\hat{C}\}$, it holds that
\begin{align*}
&\|\E(u) - E^J(u^{L_J,h_J})\|_{L^2(\gO;H^1(\IS^2)/\R)} \\
&\qquad\leq 
\|\E(u) - \E(u^{L_J,h_J})\|_{H^1(\IS^2)/\R} + \|\E(u^{L_J,h_J}) - E^J(u^{L_J,h_J})\|_{L^2(\gO;H^1(\IS^2)/\R)}\\
&\qquad\leq \sqrt{2}\Bigl( 
\hat{C}^2_s  (L_J^{-s} + h_J^{\min\{s,k\}})^2 + \frac{\hat{C}^2}{M_0} + 4\hat{C}_s^2\sum_{j=1}^J \frac{(L_{j-1}^{-s} + h_{j-1}^{\min\{s,k\}} )^2}{M_j}
\Bigr)^{1/2}\\
&\qquad\leq
C_s\Bigl( 
   \frac{1}{M_0} + \sum_{j=1}^J \frac{(L_{j-1}^{-2s} + h_{j-1}^{2 \min\{s,k\}} )}{M_j} + L_J^{-2s} + h_J^{2 \min\{s,k\}}
\Bigr)^{1/2}
.
\qedhere
\end{align*}
\end{proof}
We remark that Theorem~\ref{thm:MLMC_error_est_u^L_h} also covers the convergence analysis of the usual Monte Carlo estimator by the choice $J=0$.

It is natural to require
\begin{equation}\label{eq:MLMC_h_j}
 h_j  = \Op( 2^{-j} h_0),
  \qquad j\in\N_0,
\end{equation}
for some initial mesh width $h_0>0$.
Generally,
 one attempts to equilibrate the error contributions of the approximations of the noise, in space, and of the expectation. 
From Theorem~\ref{thm:trunc_hFEM_Conv} or Theorem~\ref{thm:MLMC_error_est_u^L_h}
we see that to equilibrate the error contributions from the 
truncation of the \KL expansion of the continuous iGRF and the error
contribution from the Galerkin FE approximation
we need to choose the increasing sequence $(L_j,j\in\N_0)$ 
\emph{comparably} to $(h_j^{-1},j\in\N_0)$, i.e., 
there exists a constant $C$ with $C^{-1} h_j\leq (L_j)^{-1}\leq C h_j$ for every $j\in\N_0$.
Hence, we consider
\begin{equation}\label{eq:MLMC_L_j}
 L_j  :=\left\lceil\frac{h_0}{h_j} \right\rceil L_0,
  \qquad j\in\N_0,
\end{equation}
for some initial truncation level $L_0\in\N$.
Under our only assumption that the angular power spectrum of the continuous iGRF
satisfies~\eqref{eq:AngSpecDec} for some $\gb>0$ we obtained with Theorem~\ref{thm:PthReg_Sobolev}
that the unique solution $u$ to~\eqref{eq:ellSPDEweak} 
is in $L^p(\gO;H^{1+s}(\IS^2))$ for every $s\in[0,\gb/2)$ and every $p\in[1,+\infty)$.
To determine  the sample sizes
for a given $\beta>0$, we fix $s\in(0,\beta/2)$ such that $s\leq k$, where $k$ denotes 
the polynomial degree of the FE space.
A possible choice of the sample numbers $(M_j,j=0,\dots,J)$, $J\in\N_0$, 
in the MLMC estimator is to equilibrate the error contributions of the MLMC
estimator across the discretization levels according to Theorem~\ref{thm:MLMC_error_est_u^L_h}.
This leads to the following choice: 
for a given maximal discretization level $J\in\N_0$, 
we set
\begin{equation}\label{eq:MLMC_levels}
 M_0= \lceil h_J^{-2s} \kappa\rceil  
 \quad \text{and}\quad 
 M_j = \Bigl\lceil\Bigl(\frac{h_{j-1}}{h_J}\Bigr)^{2s}j^{1+\varepsilon}\kappa\Bigr\rceil   
\end{equation}
for $j=1,\dots,J$, 
a scaling factor $\kappa\geq  2^{-2s}$ (allow $\kappa>0$ if $J=0$), and 
a positive constant $\varepsilon>0$.
If $s>k$ we make the same choices as in~\eqref{eq:MLMC_levels} 
with $s$ replaced by $k$.
\begin{corollary}\label{cor:err_and_work_bounds}
 Let $J\in\N_0$ and $\varepsilon>0$ be fixed. Assume that the conditions of Theorem~\ref{thm:MLMC_error_est_u^L_h} 
 are satisfied for some $\beta>0$ and choose $(L_j,h_j,M_j,j=0,\dots,J)$ 
 according to~\eqref{eq:MLMC_L_j}, \eqref{eq:MLMC_h_j}, and~\eqref{eq:MLMC_levels}.
 Denote by~$\zeta$ the Riemann zeta function.
 Then, for every $s\in(0,\beta/2)$, 
 there exists $C_s>0$ such that
 \begin{equation*}
 \|\E(u) - E^J(u^{L_J,h_J})\|_{L^2(\gO;H^1(\IS^2)/\R)}
 \leq C_s{\Bigl( \zeta(1+\varepsilon)\frac{1}{\kappa} + 1 \Bigr)^{1/2}}\;  h_J^{\min\{s,k\}}
 .
\end{equation*}
If for $\eta_1>0$ and $\eta_2\geq 0$, 
the work to compute one sample of $u^{L_j,h_j}$ is comparable to
$h_j^{-2\eta_1}\log^{\eta_2}(h_j^{-2})$, $j=0,\dots,J$,
then the total work to compute $E^J(u^{L_J,h_J})$ satisfies
\begin{equation*}
\cW_J 
= 
\begin{cases}
\Op(h_J^{-2\min\{s,k\}}\gk) 
= 
\Op(2^{2\min\{s,k\}J}\gk)
& \min\{s,k\} > \eta_1 \\
\Op(h_J^{-2\eta_1}\max\{J,1\}^{\eta_2+2+\varepsilon}\gk) 
= 
\Op(2^{2J\eta_1}\max\{J,1\}^{\eta_2 + 2 +\varepsilon}\gk)
& \min\{s,k\} \leq \eta_1 
\end{cases},
\end{equation*}
where the contributions of $h_0^{-1},L_0, {\eta_1, \eta_2}$ are absorbed into 
the Landau symbols.
\end{corollary}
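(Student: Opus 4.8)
\textbf{Proof plan for Corollary~\ref{cor:err_and_work_bounds}.}
The plan is to substitute the chosen parameters~\eqref{eq:MLMC_h_j}, \eqref{eq:MLMC_L_j}, \eqref{eq:MLMC_levels} into the error bound of Theorem~\ref{thm:MLMC_error_est_u^L_h} and simplify, and then to bound the total work by summing the per-sample costs weighted by the sample numbers~$M_j$. First I would observe that~\eqref{eq:MLMC_L_j} gives $L_j^{-1}\leq C h_j/h_0$, so that $L_j^{-2s}$ and $h_j^{2\min\{s,k\}}$ are comparable (up to constants absorbed into the Landau symbol, using $h_0^{-1},L_0$) after we restrict to $s\leq k$ (the case $s>k$ is handled by replacing $s$ with $k$ throughout, as stipulated). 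Thus the bracket in Theorem~\ref{thm:MLMC_error_est_u^L_h} is, up to a constant, $\frac{1}{M_0}+\sum_{j=1}^J \frac{h_{j-1}^{2s}}{M_j}+h_J^{2s}$.

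For the error estimate I would then plug in $M_0=\lceil h_J^{-2s}\kappa\rceil\geq h_J^{-2s}\kappa$ and $M_j\geq (h_{j-1}/h_J)^{2s} j^{1+\varepsilon}\kappa$, which yield $\frac{1}{M_0}\leq h_J^{2s}/\kappa$ and $\frac{h_{j-1}^{2s}}{M_j}\leq h_J^{2s} j^{-(1+\varepsilon)}/\kappa$. Summing over $j=1,\dots,J$ and using $\sum_{j\geq 1} j^{-(1+\varepsilon)}=\zeta(1+\varepsilon)$ gives the bracket $\leq h_J^{2s}\big(\zeta(1+\varepsilon)/\kappa + h_J^{2s}\big)$; since $h_J\leq h_0$ is bounded, $h_J^{2s}\leq$ const, so after taking square roots and absorbing constants we obtain the claimed bound $C_s(\zeta(1+\varepsilon)/\kappa + 1)^{1/2}h_J^{\min\{s,k\}}$ (recalling $s\leq k$; otherwise $s$ is replaced by $k$).

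For the work bound, the total cost is $\cW_J=\sum_{j=0}^J M_j\cdot(\text{cost per sample on level }j)$ with per-sample cost comparable to $h_j^{-2\eta_1}\log^{\eta_2}(h_j^{-2})$; using~\eqref{eq:MLMC_h_j}, $h_j^{-1}\sim 2^j/h_0$, so $\log(h_j^{-2})\sim j$ up to constants and the cost is $\sim 2^{2j\eta_1}j^{\eta_2}$. From~\eqref{eq:MLMC_levels}, $M_0\sim h_J^{-2s}\kappa\sim 2^{2Js}\kappa$ and $M_j\sim 2^{2(j-1)s}2^{-2Js}\cdot(\text{wait: }(h_{j-1}/h_J)^{2s}=2^{2(J-(j-1))s})$ — more precisely $M_j\sim 2^{2(J-j+1)s}j^{1+\varepsilon}\kappa$. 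Hence $M_j\cdot 2^{2j\eta_1}j^{\eta_2}\sim 2^{2Js}2^{2j(\eta_1-s)}j^{\eta_2+1+\varepsilon}\kappa$ and $M_0\cdot 1\sim 2^{2Js}\kappa$. The sum $\sum_{j=1}^J 2^{2j(\eta_1-s)}j^{\eta_2+1+\varepsilon}$ is then handled by the standard geometric-series dichotomy: if $s>\eta_1$ it converges (bounded by a constant), giving $\cW_J=\Op(2^{2Js}\kappa)=\Op(h_J^{-2s}\kappa)$; if $s\leq\eta_1$ it is dominated by its last term $2^{2J(\eta_1-s)}J^{\eta_2+1+\varepsilon}$ up to a factor $J$ (from the polynomial weight when $s=\eta_1$, or a constant when $s<\eta_1$), giving $2^{2Js}\cdot 2^{2J(\eta_1-s)}J^{\eta_2+2+\varepsilon}\kappa=\Op(2^{2J\eta_1}\max\{J,1\}^{\eta_2+2+\varepsilon}\kappa)=\Op(h_J^{-2\eta_1}\max\{J,1\}^{\eta_2+2+\varepsilon}\kappa)$, matching the stated cases (with $s$ replaced by $k$ when $s>k$). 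The only mildly delicate point—which I would treat carefully rather than as routine—is the ceiling functions in~\eqref{eq:MLMC_levels}: one must check $\lceil x\rceil\leq x+1\leq 2x$ for $x\geq 1$, which holds since $\kappa\geq 2^{-2s}$ and the prefactors $(h_{j-1}/h_J)^{2s}\geq 1$, $j^{1+\varepsilon}\geq 1$ guarantee the arguments are $\geq 1$; this is what keeps all constants independent of $J$ and lets us absorb $h_0^{-1},L_0,\eta_1,\eta_2$ into the Landau symbols.
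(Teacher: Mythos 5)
Your proposal is correct and follows essentially the same route as the paper's proof: substitute the parameter choices into the bound of Theorem~\ref{thm:MLMC_error_est_u^L_h}, bound the level sum by $\zeta(1+\varepsilon)/\kappa$ via $\sum_{j\ge1}j^{-(1+\varepsilon)}$, and split the work estimate into the convergent geometric case $\min\{s,k\}>\eta_1$ and the polynomially dominated case $\min\{s,k\}\leq\eta_1$. (The intermediate display ``$h_J^{2s}\bigl(\zeta(1+\varepsilon)/\kappa + h_J^{2s}\bigr)$'' should read $h_J^{2s}\bigl(\zeta(1+\varepsilon)/\kappa + 1/\kappa + 1\bigr)$, but this is a transcription slip that does not affect the stated conclusion.)
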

\begin{proof}
Let $s_0:=\min\{s,k\}$.
The error estimate follows from the choices of the values for $(L_j,h_j,M_j,j=0,\dots,J)$ 
by Theorem~\ref{thm:MLMC_error_est_u^L_h},
i.e., we 
conclude that
\begin{align*}
&\|\E(u) - E^J(u^{L_J,h_J})\|_{L^2(\gO;H^1(\IS^2)/\R)}\\
&\qquad\leq 
 \hat{C}_s \Bigl( \frac{1}{M_0} + \sum_{j=1}^J \frac{L_{j-1}^{-2s} + h_{j-1}^{2s_0}}{M_j} + L_J^{-2s} + h_J^{2s_0}\Bigr)^{1/2}
\\
 &\qquad\leq \hat{C}_s {\Bigl( \frac{1}{\kappa} +  \frac{1}{\kappa}\Big(\frac{1}{(L_0h_0)^{2s_0}}+1\Big)\zeta(1+\varepsilon) + \frac{1}{(L_0h_0)^{2s_0}}+1  \Bigr)^{1/2}} h_J^{s_0}
 ,
\end{align*}
where $\hat{C}_s$ is the constant from Theorem~\ref{thm:MLMC_error_est_u^L_h}.
Since $\zeta(1+\varepsilon)>1$ for every $\varepsilon>0$, 
we obtain the claimed estimate with $C_s := \hat{C}_s\sqrt{(L_0 h_0)^{-2s_0} +2}$.
To prove the bound on the computational work, 
we insert the values for $M_j$ and $h_j$ and obtain
\begin{align*} 
 \cW_J 
 &\leq C_1 \Bigl(
 M_0 h_0^{-2\eta_1}\log^{\eta_2}(h_0^{-2})  + \sum_{j=1}^J M_j( h_j^{-2\eta_1}\log^{\eta_2}(h_j^{-2}) 
 + h_{j-1}^{-2\eta_1}\log^{\eta_2}(h_{j-1}^{-2}))
 \Bigr)
 \\
 &
  \leq C_2 \gk \Bigl(
 2^{2s_0 J} + \sum_{j=1}^J 2^{2s_0(J-j+1) +2j\eta_1} j^{\eta_2+1+\varepsilon}
 \Bigr)
 ,
\end{align*}
where the constant $C_2>0$ depends on $C_1>0$, $L_0$, $h_0$, $\eta_1$, and $\eta_2$.
If $s_0\leq \eta_1$, then $\cW_J = \Op(2^{2J\eta_1}J^{\eta_2+2+\varepsilon}\gk) = \Op(h_J^{-2\eta_1}J^{\eta_2+2+\varepsilon}\gk)$.
In the other case that $s_0>\eta_1$, it follows with the fact 
that $\sum_{j\geq 1} \rho^j j^{\eta_2+1+\varepsilon} <+\infty$
for every $\rho\in(0,1)$ that
\begin{align*}
 \cW_J 
 &\leq C_2
 2^{2s_0J}\gk \Bigl(1 + \sum_{j=1}^J 2^{-2j(s_0-\eta_1)} j^{\eta_2+1+\varepsilon}\Bigr)
 =
 \Op(2^{2s_0 J}\gk)
 =
 \Op(h_J^{-2s_0}\gk)
 ,
\end{align*}
which finishes the proof of the corollary.
\end{proof}%
Note that the choices $(M_j,j=0,\dots,J)$, $J\in\N_0$ in~\eqref{eq:MLMC_levels}
depend on the regularity of the solution~$u$ of~\eqref{eq:ellSPDEweak}.
However, the closer $s$ is to $\gb/2$
the harder it should be to observe the convergence behavior that is theoretically 
guaranteed by Theorem~\ref{thm:MLMC_error_est_u^L_h}, 
because constants may become arbitrarily large.
We conclude the theoretical part of 
the paper with several remarks on the convergence bounds.

\begin{remark}\label{rmk:MLMC_spect_meth}
The proof of Theorem~\ref{thm:MLMC_error_est_u^L_h} is not restricted to the considered FE Methods above.
If the conditions of Theorem~\ref{thm:ellSPDEN} are satisfied with $\gb>0$, 
an analogous argument implies the respective statement in the case of Spectral Methods,
i.e., for every $s\in(0,\gb/2)$, there exists a
constant $C_s>0$ such that  for $J\in\N_0$,
\begin{equation*}
 \|\E(u) - E^J(u^{L^a_J,L^u_J})\|_{L^2(\gO;H^1(\IS^2)/\R)}
  \leq C_s\Bigl( \frac{1}{M_0} + \sum_{j=1}^{J} \frac{1}{M_{j}}L_{j-1}^{-2s}  + L_J^{-2s} \Bigr)^{1/2}
  ,
\end{equation*}
where the degrees of $a^{L_j^a}$ and of $\cH_{1:L_j^u}$, $j\in\N_0$,
are chosen as increasing sequences that define $L_j:= \min\{L_j^a, L_j^u\}$, $j\in\N_0$.
As in the FE case the number of samples to equilibrate the MC errors on the levels 
can be chosen $M_0 :=\lceil L_J^{2s}\gk\rceil$ 
and $M_j := \lceil(L_{J}/L_{j-1})^{2s}j^{1+\varepsilon}\gk\rceil$, $j=1,\dots,J$, for a positive constant $\varepsilon>0$ 
and $\gk\geq (L_{J}/L_{J-1})^{-2s}$ (allow $\gk>0$ if $J=0$). 
Hence, there exists $C_s>0$ such that
\begin{equation*}
 \|\E(u) - E^J(u^{L^a_J,L^u_J})\|_{L^2(\gO;H^1(\IS^2)/\R)}
 \leq C_s\Bigl(  \zeta(1+\varepsilon)\frac{1}{\kappa} + 1 \Bigr)^{1/2}L_J^{-s}
 .
\end{equation*}
\end{remark}
\begin{remark} 
 For smooth source terms $f\in C^\infty(\IS^2) = \bigcap_{s'>0} H^{-1+s'}(\IS^2)$, 
 the convergence rate of the MLMC estimator for Spectral Methods 
 is given by $s$ without further restrictions, cp.\ Theorem~\ref{thm:ellSPDEN} 
 and Remark~\ref{rmk:MLMC_spect_meth}.
 The decay of the angular power
 spectrum of the underlying iGRF~$T$ in terms of $\beta >0$ in~\eqref{eq:AngSpecDec}
 is the only constraint on the convergence rate $s$ since $s<\beta/2$.
 For Finite Element Methods, the convergence rate is additionally bounded by the polynomial degree $k$ of the Finite Element space,
 cp.\ Theorem~\ref{thm:MLMC_error_est_u^L_h}.
 
 We conclude that we have essentially determined the achievable convergence rates of MLMC FE and 
 Spectral Methods solely with the decay of the angular power spectrum
 of the underlying iGRF in the stochastic operator, which in the FE case are bounded by the polynomial degree of the basis functions.
\end{remark}
%
\begin{remark}
 There exists an algorithm to compute samples of an iGRF that has a complexity behaving as $\Op(N \log^2(N))$, cp.\ ~\cite{HRKM03}, 
 where $N$ is the number of sample points 
 of a quadrature to compute stiffness matrices. The number of sample points is comparable to the degrees of freedom of the spatial 
 discretization.
 In the FE case, iterative solvers such as multigrid, cp.\ ~\cite{Bramble_Multigrid}, suggest to have a complexity that is linear in the degrees of freedom,
 where here the resulting linear systems do not render the classical theory, since condition numbers of system matrices may be close to 
 degenerate due to the 
 lognormal diffusion coefficient.
 In the setting of Corollary~\ref{cor:err_and_work_bounds}, this would allow for $\eta_1=1$ and $\eta_2=2$.  
\end{remark}
\begin{remark}
Since the degrees of freedom of either of the considered spatial discretizations relate to the discretization parameter with
$N_{h_J} ={\rm dim}(V^{h,k}) = \Op(h_J^{-2})$ and $N_{L^u}= {\rm dim}(\mathcal{H}_{1:L^u})= \Op((L^u)^2)$, cp.\ Section~\ref{sec:Discr},
respective convergence estimates and work bounds from Corollary~\ref{cor:err_and_work_bounds} and Remark~\ref{rmk:MLMC_spect_meth} 
in the degrees of freedom are implied.
\end{remark}
\begin{remark}
A decrease of the choices of samples $(M_j,j=0,\dots,J)$ in~\eqref{eq:MLMC_levels},
i.e., if $\kappa<1$,
will increase the MC error contribution 
in Theorem~\ref{thm:MLMC_error_est_u^L_h} 
basically by the inverted square root of~$\kappa$ due to a larger MC error contribution.
For instance, applying \cite[Theorem~1]{L16} in our setting yields 
sample numbers scaled by a factor of $2^{-2\min\{s,k\}}$ for $j=1,\dots,J$ sacrificing an increase 
in the corresponding constant $C_2$ (in the notation of \cite[Theorem~1]{L16}) of the error estimate.
This constant will be scaled by a factor of $2^{2\min\{s,k\}}$.
\end{remark}
\section{Numerical experiments}\label{sec:num_exp}

We consider here the test problem with smooth right hand side $f=Y_{1 0}$, i.e., $\ell=1$ and $m=0$, and angular power spectrum given 
by
\begin{equation*}
 A_\ell = (1+\ell)^{-\alpha}, \qquad \ell \in \N_0,
\end{equation*}
for $\alpha\in (2,+\infty)$. 
Since 
\begin{equation*}
\sum_{\ell\geq 0}
A_\ell \ell^{1+\beta}<+\infty
\end{equation*}
for every $\beta< \alpha-2$, Theorem~\ref{thm:logNiGRFsProp} implies that the respective lognormal random field $a\in L^p(\Omega;C^{\iota,\gamma}(\IS^2))$,
$p\in[1,+\infty)$,
for every $\iota\in\N_0$ and $\gamma\in(0,1)$ such that $\iota+\gamma<(\alpha-2)/2$.
For a given number of levels $J\in\N_0$, we study the error $\E(u) - E^J(u^{L_J,h_J})$ in the $L^2(\Omega;H^1(\IS^2)/\R)$-norm. 
The sample numbers per level are chosen according to \eqref{eq:MLMC_levels}.
Also the truncation levels are chosen as mentioned in Section~\ref{sec:MLMC},
i.e., $L_j=\lceil h_0 / h_j\rceil L_0$, $j=0,\dots,J$, for some fixed $L_0\in\N$.
We therefore expect by Corollary~\ref{cor:err_and_work_bounds} to observe a convergence rate of $\Op(h_J^{\min\{s,k\}})$ 
for any $s<(\alpha-2)/2$, where $k$ is the polynomial degree of the ansatz functions.

The implementation of the MLMC estimator in \eqref{eq:MLMC_est_def} for the FE method presented in Section~\ref{sec:FEMS2}
builds on the structures of the boundary element C++ library \texttt{BETL}, cp.\ \cite{BETL}.
The geometrical error that occurs when surfaces are polynomially approximated can in the case of $\IS^2$ be avoided with the 
correction formula in
\cite[Equation~(2.12)]{Demlow2009} for the gradient and with the correction formula
in \cite[Equation~(2.10)]{Demlow2009} for the surface measure. 
The FE spaces result from refining an inscribed initial affine approximation of $\IS^2$.
New vertices are not projected to $\IS^2$ to obtain nested FE spaces. 
FE spaces on $\IS^2$ result by lifting functions to $\IS^2$, cp.\ \cite[Sections~2.4 and~2.5]{Demlow2009}.
This is feasible in the case of $\IS^2$, because a \emph{signed distance function} is explicitly known. 
A signed distance function of $\IS^2$ maps points in $\R^3\backslash\{0\}$ to their distances to $\IS^2$ multiplied
by a negative sign if (by convention) they are inside of the closed surface $\IS^2$.

The evaluation of the truncated spherical harmonics series \eqref{eq:PL} is implemented with the \texttt{SHTns} library,
cp.\ \cite{SHTns2013}. 
This implementation has a larger complexity of $\Op(N^{3/2})$ assuming that $N \sim L^2$ is the 
number of grid points compared to $\Op(N \log(N)^2))$, which is the complexity of the algorithm presented in \cite{HRKM03}.
However, the available implementation of the latter algorithm seems to require significant amounts of memory, cp.\ \cite{SHTns2013}, which limits the truncation level~$L$. 
Also, \texttt{SHTns} outperforms amongst others the available implementation of the algorithm in \cite{HRKM03} 
in measured computing time as demonstrated in \cite{SHTns2013} 
and in particular allows for higher truncation levels.
The linear systems are solved with the Intel \texttt{MKL} version of the \texttt{PARDISO} solver (see also \cite{Schenk200169}).
The evaluation of the MLMC estimator is parallelized with the \texttt{gMLQMC} library, cp.\ \cite{gMLQMC}, which allows for generic sampling.
The parallelization in \texttt{gMLQMC} uses the \texttt{Boost.MPI} library.
As a pseudo random number generator, we use the Mersenne Twister implementation from the \emph{C++11 standard}.
We will use \texttt{matplotlib}, cp.\ \cite{matplotlib}, to visualize our data (see Figure~\ref{fig:conv_plot}). 

We present numerical results for first order FE, i.e., $k=1$, for $\alpha=3$ and $\alpha=4$, which have 
a theoretical convergence rate of essentially $0.5$ and $1$, respectively.
The sample numbers on each level are chosen as in \eqref{eq:MLMC_levels} with $\kappa=1$, $\varepsilon = 0.2$, and $L_0=2$. 
As reference for $\E(u)$ we use the average of $10$ realizations of the MLMC FE estimator with one further level of refinement and 
parameter choices $\kappa=40$ and $L_0=5$.
The $L^2(\Omega;H^1(\IS^2)/\R)$-norm is approximated by the square root of the average over 20 realizations of $\|\E(u) - E^J(u^J)\|^2_{H^1(\IS^2/\R)}$.
In Figure~\ref{fig:conv_plot} we observe convergence rates that our theoretical 
analysis predicts, since for the border line cases of convergence rates equal to $0.5$ and $1$, constants in the error bounds may become 
arbitrarily large. The empirical rates have been computed with least squares 
    taking into account the five data points corresponding to finer spatial grids.
\begin{figure}[ht]
    \centering
    \includegraphics[width=0.7 \textwidth]{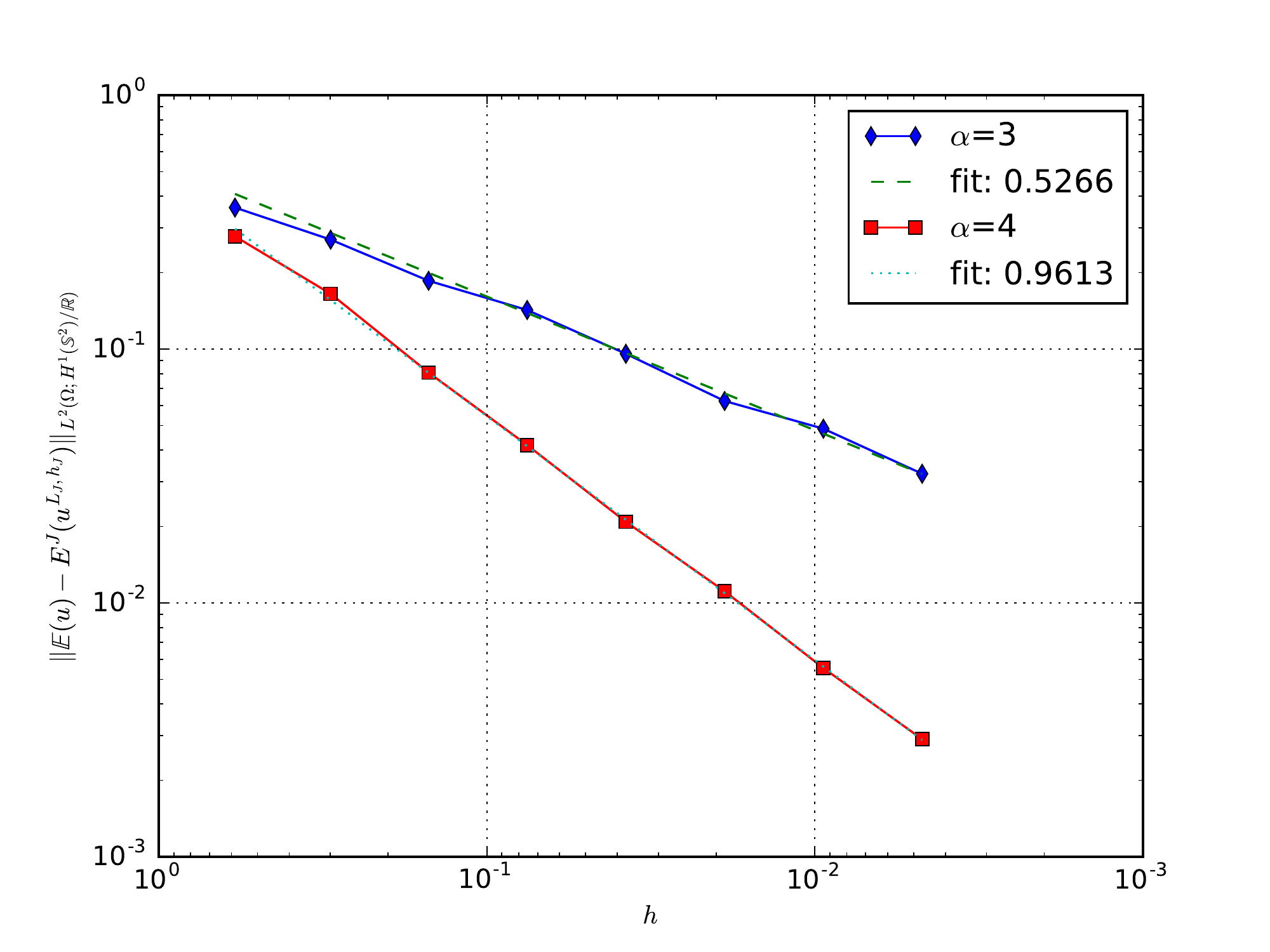}
    \caption{Convergence of MLMC with $\alpha= 3,4$ and $\kappa=1$.}
    \label{fig:conv_plot}
\end{figure}

\appendix

\section{Measure theory and functional analysis}\label{app:meas_func-ana}

The purpose of this appendix is to collect the measure theoretical and functional analytical background of our analysis in the main text. While results on domains in Euclidean space are well-known in the literature, the corresponding results on the unit sphere are not explicitly available. Therefore, we derive these missing results for the unit sphere in what follows, which include properties of H\"older and Sobolev spaces and especially a Sobolev embedding theorem on~$\IS^2$.

One way to translate results from Euclidean space to manifolds such as the unit sphere is to use an atlas and show the invariance of the results under a change of atlas. This will be of frequent use in what follows.
Therefore, let $\{(U_i,\eta_i),i\in\cI\}$ be a finite $C^\infty$ atlas of $\IS^2$, where 
$\{U_i,i\in\cI\}$ is a finite open cover of $\IS^2$ and $\{\eta_i:U_i\rightarrow \eta_i(U_i)\subset\R^2,i\in\cI\}$ 
are the respective coordinate charts, which are sometimes also simply called \emph{coordinates}.
Furthermore, let $g$ be the metric tensor which is expressed for any $x_0\in\IS^2$ locally 
in the coordinates $\{\eta_i,i\in\mathcal{I}\}$ as
\begin{equation*}
 g_{k\ell}(x_0)
 :=
 \Bigl\langle \frac{\partial \eta_i^{-1}(\hat{x}_0)}{\partial \hat{x}^k}  , 
 \frac{\partial \eta_i^{-1}(\hat{x}_0)}{\partial \hat{x}^\ell}  \Bigr\rangle_{\R^3}
\end{equation*}
for $k,\ell=1,2$, where $\hat{x}_0 = \eta_i(x_0)$ and $i\in\mathcal{I}$ is such that $x_0\in U_i$.
The matrix $g(x_0)$ induces an inner product on the 
\emph{tangent space $T_{x_0}\IS^2$ at $x_0$} 
in the basis
$\frac{\partial \eta_i^{-1}(\hat{x}_0)}{\partial \hat{x}^k} $, $k=1,2$, i.e., for 
$v=\sum_{k=1}^2 v^k \frac{\partial \eta_i^{-1}(\hat{x}_0)}{\partial \hat{x}^k} $,
$w=\sum_{k=1}^2 w^k \frac{\partial \eta_i^{-1}(\hat{x}_0)}{\partial \hat{x}^k} \in T_{x_0}\IS^2$, 
it holds that
$\langle v,w \rangle_{\R^3} = \sum_{k,\ell=1}^2 g_{k\ell}(x_0)v^k w^\ell$.
We denote the components of the inverse of $g$ at any arbitrarily chosen $x_0\in\IS^2$ by 
$g^{k\ell}(x_0) := (g^{-1}(x_0))_{k\ell}$ for $k,\ell=1,2$
and further introduce $|g|(x_0):=\det(g(x_0))$.
The \emph{spherical gradient} $\nabla_{\IS^2}$ and the \emph{spherical divergence} $\nabla_{\IS^2}\cdot$ 
are locally expressed in terms of~$g$, i.e., for any $x_0\in\IS^2$, $i\in\mathcal{I}$ 
such that for any $x_0\in U_i$ and $\hat{x}_0=\eta_i(x_0)$,
\begin{equation*}
 \nabla_{\IS^2} f(x_0)
 :=
 \sum_{k,\ell=1}^2 g^{k\ell}(x_0) \frac{\partial (f\circ\eta_i^{-1})(\hat{x}_0)}{\partial \hat{x}^k}
 \frac{\partial \eta_i^{-1}(\hat{x}_0) }{\partial \hat{x}^\ell}
\end{equation*}
and 
\begin{equation*}
 \nabla_{\IS^2} \cdot Z (x_0)
 :=
 \frac{1}{\sqrt{|g|(x_0)}} 
 \sum_{\ell=1}^2
 \frac{\partial}{\partial \hat{x}^\ell}
 ((\sqrt{|g|} Z^\ell)\circ\eta_i^{-1})(\hat{x}_0),
\end{equation*}
where $f:\IS^2\rightarrow\R$ is a function and 
$Z=\sum_{\ell=1}^2 Z^\ell \frac{\partial \eta_i^{-1}}{ \partial \hat{x}^\ell}$ a vector field,
cp.\ ~\cite[Equations (3.1.17), (3.1.19)]{jost2011RieGeoGeoAna6ed}.
We define the
\emph{spherical Laplacian}, also called 
\emph{Laplace--Beltrami operator}, by
\begin{equation*} 
 \Delta_{\IS^2} := \nabla_{\IS^2} \cdot \nabla_{\IS^2},
\end{equation*}
and we denote by $\gs$ the \emph{Lebesgue measure on the sphere}
which admits for every $i\in \cI$ the local representation 
  \begin{equation*}
   \dd\gs(x)
    = \sqrt{|g|(x)}\mathrm{d}\hat{x}^1\mathrm{d}\hat{x}^2
  \end{equation*}
on $U_i$ by~\cite[Equation (3.3.8)]{jost2011RieGeoGeoAna6ed},
where $x\in U_i$ and $\hat{x}=\eta_i(x)$.
These definitions are valid fo general coordinates and therefore generalize the 
respective expressions given in Section~\ref{sec:iGRF}.
Note that for any $x_0\in\IS^2$, 
the inner product that is induced in $T_{x_0}\IS^2$ by $g(x_0)$ 
does not depend on the choice of the coordinates $\{\eta_i,i\in\mathcal{I}\}$, 
cp.\ ~\cite[Equations (1.4.4), (1.4.5)]{jost2011RieGeoGeoAna6ed}.
For further details, the reader is referred 
to~\cite[Sections~1.4 and~3.1]{jost2011RieGeoGeoAna6ed}.

Furthermore, let $\Psi=\{\Psi_i,i\in\cI\}$ be a $C^\infty$ partition of unity, 
which is subordinate to $\{U_i,i\in\cI\}$, i.e., 
$\supp(\Psi_i)\subset U_i$ for every $i\in\cI$.
The support of a function is the closure of the points, where the function is non-zero.
We infer from~\cite[Theorem~7.4.5]{Triebel92} and~\cite[Theorem~3.9]{GS13} 
that Sobolev spaces on~$\IS^2$ can equivalently to Section~\ref{sec:iGRF} 
be characterized via pullbacks with respect to general coordinates, i.e.,
$v\in H^s_q(\IS^2)$ if and only if $(v\Psi_i)\circ\eta_i^{-1} \in H^s_q(\R^2)$ 
for every $i\in\cI$, 
where $v \Psi_i$ has to be understood as pointwise multiplication, and 
\begin{equation*}
 v\mapsto\Bigl( \sum_{i\in\cI} \|(v\Psi_i)\circ\eta_i^{-1}\|^q_{H^s_q(\R^2)}\Bigr)^{1/q}
\end{equation*}
is an equivalent norm on $H^s_q(\IS^2)$,
where $H^s_q(\R^2)$ denote the usual Bessel potential spaces on $\R^2$, which are equal to
the Sobolev--Slobodeckij spaces for $q=2$ with equivalent norms, cp.\ ~\cite[Definition~2.3.1(d), Theorem~2.3.2(d), Equation~4.4.1(8)]{Triebel95}.
More precisely, ~\cite[Theorem~7.4.5]{Triebel92} implies that $H^s_q(\IS^2)$ can be 
equivalently characterized via pullbacks with respect to the geodesic normal coordinates.
In~\cite[Theorem~3.9]{GS13} it is shown that the characterization of Sobolev spaces on manifolds with bounded geometry, 
e.g., $\IS^2$, via pullbacks with respect to arbitrary coordinates does not depend on the coordinates and that different coordinates lead to equivalent norms.
We remark that a function like $(v\Psi_i)\circ\eta_i^{-1}$ on $\eta_i(U_i)$ 
can be extended smoothly by zero to all of $\R^2$, 
since $\Psi_i\circ\eta_i^{-1}$ is smooth and compactly supported in $\eta_i(U_i)$.
For details on the geodesic normal coordinates, which are sometimes also called (Riemannian) normal coordinates 
(cp.\ ~\cite[Definition~1.4.4]{jost2011RieGeoGeoAna6ed}), 
we refer the reader to~\cite[Example~3]{GS13},
while a detailed description of Bessel potential spaces can be found in~\cite[Chapter~2]{T83}.
%


Finally, we equip the H\"older spaces~$C^{\iota,\gamma}(\IS^2)$, $\iota\in \N_0$, $\gamma\in[0,1)$, that were introduced in Section~\ref{sec:iGRF} with the norm $\|\cdot\|_{C^{\iota,\gamma}(\IS^2)}$ given by
\begin{equation*}
 \|v\|_{C^{\iota,\gamma}(\IS^2)}
 :=\max_{i\in\cI}\|(v\Psi_i)\circ\eta_i^{-1}\|_{C^{\iota,\gamma}(\R^2)}
\end{equation*}
for every $v \in C^{\iota,\gamma}(\IS^2)$.
This norm is well-defined, since different choices of atlases and partitions of unity 
will lead to equivalent norms (cp.\ ~\cite[Proposition~6.9]{Herrmann13}).

A nice and convenient property of the regularity of the exponential function in terms 
of H\"older norm bounds that will be introduced as the following lemma.
This lemma is proven by an induction argument using the fact that H\"older spaces are algebras, i.e.,
the product of functions is an element of the same H\"older space and the product is continuous. 

\begin{lemma}\label{lemma:Hoelder_prod_comp_est}
 Let $\iota\in\N_0$ and $\gg\in(0,1)$, 
 then there exists a constant $C_{\iota,\gg}$ such that for every $v\in C^{\iota,\gg}(\IS^2)$
\begin{equation}\label{est:Hoelder_prod_comp_est}
 \|\exp( v)\|_{C^{\iota,\gg}(\IS^2)}\leq C_{\iota,\gg} \|\exp(v)\|_{C^{0}(\IS^2)}\bigl(1+\|v\|_{C^{\iota,\gg}(\IS^2)}^{\iota+1}\bigr).
\end{equation}
\end{lemma}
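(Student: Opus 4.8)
The plan is to prove the estimate by induction on $\iota$, after first reducing it to an analogous statement for functions on bounded subsets of $\R^2$, following the pattern of \cite[Theorem~A.8]{Hoermander1976}. For the localization step I would fix the finite atlas $\{(U_i,\eta_i),i\in\cI\}$ and the subordinate partition of unity $\Psi$ used to define $\|\cdot\|_{C^{\iota,\gg}(\IS^2)}$, pick compact sets $K_i\subset\eta_i(U_i)$ whose interiors still cover $\IS^2$ and which contain $\supp(\Psi_i\circ\eta_i^{-1})$, and use that multiplication by the fixed smooth, compactly supported function $\Psi_i\circ\eta_i^{-1}$ is bounded $C^{\iota,\gg}(K_i)\to C^{\iota,\gg}(\R^2)$ while composition with the smooth transition maps $\eta_j\circ\eta_i^{-1}$ is bounded on Hölder spaces. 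Combined with the pointwise identity $\exp(v)\circ\eta_i^{-1}=\exp(v\circ\eta_i^{-1})$ on $U_i$, this gives $\|(\exp(v)\Psi_i)\circ\eta_i^{-1}\|_{C^{\iota,\gg}(\R^2)}\le C_i\|\exp(v\circ\eta_i^{-1})\|_{C^{\iota,\gg}(K_i)}$, and, by expanding $v=\sum_j v\Psi_j$ on each overlap, $\|v\circ\eta_i^{-1}\|_{C^{\iota,\gg}(K_i)}\le C\|v\|_{C^{\iota,\gg}(\IS^2)}$, together with $\|\exp(v\circ\eta_i^{-1})\|_{C^0(K_i)}\le\|\exp(v)\|_{C^0(\IS^2)}$. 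Taking the maximum over the finitely many $i\in\cI$ and absorbing all chart-dependent constants, it then suffices to prove: for every bounded Lipschitz $K\subset\R^2$ there is a constant such that $\|\exp(w)\|_{C^{\iota,\gg}(K)}\le C\|\exp(w)\|_{C^0(K)}\,(1+\|w\|_{C^{\iota,\gg}(K)}^{\iota+1})$ for all $w\in C^{\iota,\gg}(K)$.

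For the Euclidean statement I would induct on $\iota$. The base case $\iota=0$ follows from the elementary bound $|e^{a}-e^{b}|\le e^{\max\{a,b\}}|a-b|$: applied with $a=w(x)$, $b=w(y)$ it yields $[\exp(w)]_{C^{0,\gg}(K)}\le\|\exp(w)\|_{C^0(K)}[w]_{C^{0,\gg}(K)}$, hence $\|\exp(w)\|_{C^{0,\gg}(K)}\le\|\exp(w)\|_{C^0(K)}(1+\|w\|_{C^{0,\gg}(K)})$, which is the claim for $\iota=0$. For the inductive step ($\iota\ge1$) I would use the standard norm equivalence $\|g\|_{C^{\iota,\gg}(K)}\asymp\|g\|_{C^0(K)}+\sum_{k=1}^2\|\partial_k g\|_{C^{\iota-1,\gg}(K)}$ on bounded Lipschitz domains together with the chain rule $\partial_k\exp(w)=\exp(w)\,\partial_k w$, reducing the task to estimating $\|\exp(w)\,\partial_k w\|_{C^{\iota-1,\gg}(K)}$. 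Since $C^{\iota-1,\gg}(K)$ is a Banach algebra, this is bounded by $C\|\exp(w)\|_{C^{\iota-1,\gg}(K)}\|\partial_k w\|_{C^{\iota-1,\gg}(K)}\le C\|\exp(w)\|_{C^{\iota-1,\gg}(K)}\|w\|_{C^{\iota,\gg}(K)}$, and then the induction hypothesis gives $\|\exp(w)\|_{C^{\iota-1,\gg}(K)}\le C\|\exp(w)\|_{C^0(K)}(1+\|w\|_{C^{\iota-1,\gg}(K)}^{\iota})\le C\|\exp(w)\|_{C^0(K)}(1+\|w\|_{C^{\iota,\gg}(K)}^{\iota})$. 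Putting these together yields a bound of the form $C\|\exp(w)\|_{C^0(K)}(\|w\|_{C^{\iota,\gg}(K)}+\|w\|_{C^{\iota,\gg}(K)}^{\iota+1})$ for the derivative terms, which, using the elementary inequality $t+t^{\iota+1}\le 2(1+t^{\iota+1})$ valid for $t\ge0$ whenever $\iota+1\ge2$, closes the induction.

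The ingredients invoked here, namely the Banach-algebra property and the norm equivalence for Hölder spaces on bounded Lipschitz domains, the boundedness of the relevant multiplication and composition operators, and the atlas-independence of Hölder norms, are all standard. I expect the only genuinely delicate point to be the bookkeeping in the localization step: because $\exp(v)\Psi_i\neq\exp(v\Psi_i)$, one cannot feed the weighted chart representative $(v\Psi_i)\circ\eta_i^{-1}$ directly into the Euclidean lemma, and must instead work with the unweighted representatives $v\circ\eta_i^{-1}$ on slightly enlarged compacta, controlling them via the overlaps $U_i\cap U_j$ and the partition of unity; once this reduction is carried out carefully the conclusion follows.
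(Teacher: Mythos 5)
Your proposal is correct and follows essentially the same route as the paper's proof: localize to chart domains via the atlas and partition of unity (handling the fact that $\exp(v)\Psi_i\neq\exp(v\Psi_i)$ by working with the unweighted representatives, which the paper does via an auxiliary partition of unity $\hat\Psi$ equal to $1$ on the supports), and then prove the Euclidean estimate by induction on $\iota$ using the mean-value bound for the base case and the chain rule together with the Banach-algebra property of H\"older spaces for the inductive step. The differences are only in the bookkeeping of the localization, not in substance.
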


\begin{proof}
Generally, this proof is inspired by~\cite[Theorem~A.8]{Hoermander1976}, 
but it achieves a specific result not explicitly available in that theorem.

Let $D\subset\R^2$ be a bounded, convex open domain. 
The first step is to prove the estimate for H\"older spaces over Euclidean domains, i.e., $C^{\iota,\gg}(\overline{D})$.
We set $g:=\exp$
and recall that the derivative $g'$ is again equal to $g$.

For convenience, we will omit the set $\overline{D}$ if the context is clear.
 For $\iota=0$, it is easily seen that for every $\tilde{v}\in C^{0.\gg}(\overline{D})$, 
it holds that 
 $\|g(\tilde{v})\|_{C^{0,\gg}}\leq \|g(\tilde{v})\|_{C^{0}}(1+\|\tilde{v}\|_{C^{0,\gg}})$, 
 cp.\ the proof of~\cite[Theorem~A.8]{Hoermander1976},
 which is the base case of an induction argument
 to the following induction hypothesis:
 
 Let the estimate in~\eqref{est:Hoelder_prod_comp_est} be satisfied 
 for H\"older spaces over the Euclidean set $\overline{D}$,
 i.e., for functions $\tilde{v}\in C^{n,\gg}(\overline{D})$
 for every $n\in\{0,\dots,\iota-1\}$.
 We directly perform the induction step from $n=\iota-1$ to $n+1 = \iota$.
 For $\tilde{v}_1,\tilde{v}_2\in C^{\iota.\gg}(\overline{D})$, the product estimate $\|\tilde{v}_1\tilde{v}_2\|_{C^{\iota,\gg}}\leq \hat{C}_{\iota,\gg}\|\tilde{v}_1\|_{C^{\iota,\gg}}\|\tilde{v}_2\|_{C^{\iota,\gg}}$ holds by~\cite[Theorem~A.7]{Hoermander1976},
 which implies with the chain rule from calculus and the induction hypothesis that
 \begin{align}\label{est:Hoelder_prod_comp_est_R2}
 \begin{split}
  \|g( \tilde{v})\|_{C^{\iota,\gg}}
  &=\|g( \tilde{v})\|_{C^{0}} + \sum_{j=1,2} \|\partial_{x^j}(g\circ \tilde{v})\|_{C^{\iota-1,\gg}}\\
  &\leq \|g( \tilde{v})\|_{C^{0}} + \hat{C}_{\iota-1,\gg}\sum_{j=1,2} \|g'( \tilde{v})\|_{C^{\iota-1,\gg}}\|\partial_{x^j}\tilde{v}\|_{C^{\iota-1,\gg}}\\
  &\leq \|g( \tilde{v})\|_{C^{0}} + \hat{C}_{\iota-1,\gg} \|g( \tilde{v})\|_{C^{\iota-1,\gg}}\|\tilde{v}\|_{C^{\iota,\gg}}\\
  &\leq \|g( \tilde{v})\|_{C^{0}} + \hat{C}_{\iota-1,\gg}\widetilde{C}_{\iota-1,\gg}\|g( \tilde{v})\|_{C^{0}}(1+\|\tilde{v}\|_{C^{\iota-1,\gg}}^\iota)\|\tilde{v}\|_{C^{\iota,\gg}}\\
  &\leq \widetilde{C}_{\iota,\gg}\|g( \tilde{v})\|_{C^{0}}(1+\|\tilde{v}\|_{C^{\iota,\gg}}^{\iota+1}),
 \end{split}
 \end{align}
 and finishes the induction step.
 Note that for convenience we used $\partial_{x^j} := \partial/\partial x^j$, $j=1,2$.
 
Next let $\{(U_i,\eta_i),i\in\cI\}$ be a finite $C^\infty$ atlas and $\{\Psi_i,i\in\cI\}$ a $C^\infty$ partition of unity subordinate to $\{U_i,i\in\cI\}$.
We fix $j\in\cI$ and choose another $C^\infty$ partition of unity $\{\hat{\Psi}_i,i\in\cI\}$ subordinate to $\{U_i,i\in\cI\}$ such that $\hat{\Psi}_j=1$ on $\supp(\Psi_j)$.
We can assume that $\overline{D}:=\supp(\Psi_j\circ\eta_j^{-1})$
and $\supp(\hat{\Psi}_j\circ\eta_j^{-1})$ are convex and observe with~\eqref{est:Hoelder_prod_comp_est_R2} that
\begin{align}\label{est:Hoelder_prod_comp_est_S2}
\begin{split}
 \|(\exp( v)\Psi_j)\circ\eta_j^{-1}\|_{C^{\iota,\gg}(\R^2)}
 &=\|(\exp( v)\Psi_j)\circ\eta_j^{-1}\|_{C^{\iota,\gg}(\overline{D})}\\
 &\leq \hat{C}_{\iota,\gg} \|\exp( v)\circ\eta_j^{-1}\|_{C^{\iota,\gg}(\overline{D})}\|\Psi_j\circ\eta_j^{-1}\|_{C^{\iota,\gg}(\overline{D})}\\
 &\leq C_{\iota,\gg} \|\exp( v)\circ\eta_j^{-1}\|_{C^{0}(\overline{D})}(1+ \|v\circ\eta_j^{-1}\|_{C^{\iota,\gg}(\overline{D})}^{\iota+1})\\
 &\leq C_{\iota,\gg} \|(\exp( v)\hat{\Psi}_j)\circ\eta_j^{-1}\|_{C^{0}(\R^2)}\bigl(1+ \|(v\hat{\Psi}_j)\circ\eta_j^{-1}\|_{C^{\iota,\gg}(\R^2)}^{\iota+1}\bigr).
\end{split}
\end{align}
We apply that different $C^\infty$ partitions of unity result 
in equivalent norms on $C^{\iota,\gg}(\IS^2)$ and conclude the estimate of the 
lemma by taking the maximum over $j$ on both sides of~\eqref{est:Hoelder_prod_comp_est_S2}.
\end{proof}

As in Euclidean space, Sobolev spaces can be embedded into H\"older spaces (see e.g.\ \cite[Theorem~4.6.1(e)]{Triebel95}) which is made precise in the following Sobolev embedding theorem on~$\IS^2$.
\begin{theorem}[Sobolev embedding theorem]\label{thm:Sobelev_emb}
If $s\in(0,+\infty), q\in(1,+\infty), \iota\in\N_0$, and $\gg\in(0,1)$ satisfy
$s-2/q\geq\iota+\gg$, then the embedding $H^s_{q}(\IS^2)\subset C^{\iota,\gg}(\IS^2)$
is continuous.
\end{theorem}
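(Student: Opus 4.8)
The plan is to reduce the assertion to the classical Sobolev embedding on $\R^2$ via the chart-and-partition-of-unity descriptions of $H^s_q(\IS^2)$ and $C^{\iota,\gg}(\IS^2)$ recalled above. First I would fix a finite $C^\infty$ atlas $\{(U_i,\eta_i),i\in\cI\}$ of $\IS^2$ together with a subordinate $C^\infty$ partition of unity $\{\Psi_i,i\in\cI\}$, and, for a given $v\in H^s_q(\IS^2)$, set $w_i:=(v\Psi_i)\circ\eta_i^{-1}$. By the pullback characterization of $H^s_q(\IS^2)$, each $w_i$ lies in $H^s_q(\R^2)$, is compactly supported in $\eta_i(U_i)$ (hence extends by zero to all of $\R^2$), and satisfies $\bigl(\sum_{i\in\cI}\|w_i\|_{H^s_q(\R^2)}^q\bigr)^{1/q}\le C_1\|v\|_{H^s_q(\IS^2)}$ with $C_1$ depending only on the fixed atlas and partition of unity.

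Next I would invoke the Euclidean embedding for Bessel potential spaces: since $s-2/q\ge\iota+\gg$ with $\gg\in(0,1)$, one has the continuous embedding $H^s_q(\R^2)\subset C^{\iota,\gg}(\R^2)$ (cp.\ \cite[Theorem~4.6.1(e)]{Triebel95}), so there is a constant $C_2=C_2(s,q,\iota,\gg)$ with $\|w_i\|_{C^{\iota,\gg}(\R^2)}\le C_2\|w_i\|_{H^s_q(\R^2)}$ for every $i$. Writing $v=\sum_{i\in\cI}v\Psi_i$ and transporting each summand back by the $C^\infty$ chart $\eta_i$, this shows that $v$ agrees $\gs$-a.e.\ with a function in $C^{\iota,\gg}(\IS^2)$, which I then identify with $v$. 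Combining with the definition $\|v\|_{C^{\iota,\gg}(\IS^2)}=\max_{i\in\cI}\|w_i\|_{C^{\iota,\gg}(\R^2)}$ and the trivial bound $\|w_i\|_{H^s_q(\R^2)}\le\bigl(\sum_{j}\|w_j\|_{H^s_q(\R^2)}^q\bigr)^{1/q}$ yields
\[
\|v\|_{C^{\iota,\gg}(\IS^2)}\le C_2\max_{i\in\cI}\|w_i\|_{H^s_q(\R^2)}\le C_1C_2\|v\|_{H^s_q(\IS^2)},
\]
which is the claimed continuous embedding.

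The main (and only mildly delicate) points will be bookkeeping of constants and checking that the chart representatives may be taken compactly supported so that the $\R^2$ embedding applies verbatim. Uniformity of the constants is automatic because the atlas and partition of unity are \emph{finite and fixed}, multiplication by the fixed smooth cut-offs $\Psi_i$ and composition with the fixed $C^\infty$ diffeomorphisms $\eta_i^{\pm1}$ are bounded operations on both the $H^s_q$ and the $C^{\iota,\gg}$ scales, and the Euclidean constant $C_2$ depends only on $s,q,\iota,\gg$; both issues are already addressed by the remarks preceding the statement. I do not expect any essential difficulty beyond this.
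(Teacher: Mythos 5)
Your proposal is correct and follows essentially the same route as the paper, which does not spell out a proof but states that the result ``follows with a localization argument'' via the fixed finite atlas and subordinate partition of unity, reducing to the Euclidean embedding $H^s_q(\R^2)\subset C^{\iota,\gg}(\R^2)$ of \cite[Theorem~4.6.1(e)]{Triebel95}. Your write-up simply makes explicit the bookkeeping (compact supports of the chart representatives, the $\ell^q$-versus-$\max$ comparison over the finite index set) that the paper leaves implicit.
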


Furthermore, $H^s_q(\IS^2)$-norms of products of functions can be bounded by a combination of H\"older and Sobolev norms, which is made in the following proposition. In the proof, the estimate for domains in Euclidean space in~\cite[Theorem~3.3.2]{T83} is translated to~$\IS^2$.
\begin{proposition}\label{prop:Hoelder_Sobolev_prod}
Let $q\in(1,+\infty)$ and let $\iota\in\N_0$, $\gamma\in(0,1)$, and $s\in \R$ be
such that $|s| < \iota+\gamma$.
If $v\in C^{\iota,\gg}(\IS^2)$ and $w\in H^s_q(\IS^2)$, then $vw\in H^s_q(\IS^2)$.
Moreover the following product estimate holds: there exists a constant $C_{\iota,\gg}$ such that 
for every $v\in C^{\iota,\gamma}(\IS^2)$ and 
every $w\in H^{s}_q(\IS^2)$,
\begin{equation*}
\|vw\|_{H^s_q(\IS^2)}\leq C_{\iota,\gg} \|v\|_{C^{\iota,\gamma}(\IS^2)}\|w\|_{H^s_q(\IS^2)}.
\end{equation*}
\end{proposition}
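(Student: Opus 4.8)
The plan is to reduce the claim to the corresponding pointwise--multiplier estimate on~$\R^2$, namely \cite[Theorem~3.3.2]{T83}, by means of the atlas-and-partition-of-unity descriptions of $H^s_q(\IS^2)$ and $C^{\iota,\gg}(\IS^2)$ recalled in Appendix~\ref{app:meas_func-ana}. First I would fix a finite $C^\infty$ atlas $\{(U_i,\eta_i),i\in\cI\}$ with a subordinate $C^\infty$ partition of unity $\{\Psi_i,i\in\cI\}$ and, for technical reasons, a second subordinate $C^\infty$ partition of unity $\{\hat\Psi_i,i\in\cI\}$ with $\hat\Psi_i\equiv 1$ on $\supp(\Psi_i)$, exactly as in the proof of Lemma~\ref{lemma:Hoelder_prod_comp_est}. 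Using the equivalent norm $\|u\|_{H^s_q(\IS^2)}\sim\bigl(\sum_{i\in\cI}\|(u\Psi_i)\circ\eta_i^{-1}\|_{H^s_q(\R^2)}^q\bigr)^{1/q}$, it then suffices to bound each localized piece $\|((vw)\Psi_i)\circ\eta_i^{-1}\|_{H^s_q(\R^2)}$.

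Since $\hat\Psi_i\equiv 1$ on $\supp(\Psi_i)$, one has the factorization $(vw)\Psi_i=(v\hat\Psi_i)(w\Psi_i)$, and pushing forward through the chart,
\[
((vw)\Psi_i)\circ\eta_i^{-1}=\bigl((v\hat\Psi_i)\circ\eta_i^{-1}\bigr)\cdot\bigl((w\Psi_i)\circ\eta_i^{-1}\bigr),
\]
where the first factor lies in $C^{\iota,\gg}(\R^2)$ with compact support in $\eta_i(U_i)$ (extend by zero) and the second in $H^s_q(\R^2)$. Applying \cite[Theorem~3.3.2]{T83}, which states that multiplication by a compactly supported $C^{\iota,\gg}(\R^2)$ function is bounded on $H^s_q(\R^2)$ whenever $|s|<\iota+\gg$, yields
\[
\|((vw)\Psi_i)\circ\eta_i^{-1}\|_{H^s_q(\R^2)}\le C\,\|(v\hat\Psi_i)\circ\eta_i^{-1}\|_{C^{\iota,\gg}(\R^2)}\,\|(w\Psi_i)\circ\eta_i^{-1}\|_{H^s_q(\R^2)}.
\]
The H\"older factor is $\le C\|v\|_{C^{\iota,\gg}(\IS^2)}$ because $\{\hat\Psi_i\}$ induces an equivalent norm on $C^{\iota,\gg}(\IS^2)$; raising to the $q$-th power, summing over $i\in\cI$, and invoking the equivalent norm on $H^s_q(\IS^2)$ once more gives $\|vw\|_{H^s_q(\IS^2)}\le C_{\iota,\gg}\|v\|_{C^{\iota,\gg}(\IS^2)}\|w\|_{H^s_q(\IS^2)}$, which in particular shows $vw\in H^s_q(\IS^2)$.

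The point requiring the most care is the distributional range $s<0$, where $w$ is not a function: one must first say what $vw$ means, and then justify the localization identity $(vw)\Psi_i=(v\hat\Psi_i)(w\Psi_i)$ at the level of distributions. This is legitimate precisely because $C^{\iota,\gg}$ functions act as pointwise multipliers on $H^s_q$ for $|s|<\iota+\gg$ (the content of \cite[Theorem~3.3.2]{T83}), so each local product $\bigl((v\hat\Psi_i)\circ\eta_i^{-1}\bigr)\cdot\bigl((w\Psi_i)\circ\eta_i^{-1}\bigr)$ is a well-defined element of $H^s_q(\R^2)$; that these pieces glue to a single element of $H^s_q(\IS^2)$, independent of the chosen atlas and partitions of unity, follows from the coordinate-independence of the pullback characterization, i.e.\ from \cite[Theorem~3.9]{GS13}. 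For $s\ge 0$ all products are pointwise and the argument is entirely routine.
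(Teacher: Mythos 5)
Your proof is correct and follows essentially the same route the paper intends: the paper explicitly says the proposition is obtained by translating the Euclidean multiplier estimate of \cite[Theorem~3.3.2]{T83} to $\IS^2$ via the localization argument of the second paragraph of the proof of Lemma~\ref{lemma:Hoelder_prod_comp_est}, i.e.\ charts, two subordinate partitions of unity with $\hat\Psi_i\equiv 1$ on $\supp(\Psi_i)$, and the equivalence of the resulting norms. Your additional care about the meaning of $vw$ for $s<0$ and the coordinate-independence via \cite[Theorem~3.9]{GS13} is consistent with the framework set up in Appendix~\ref{app:meas_func-ana}.
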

The proofs of Theorem~\ref{thm:Sobelev_emb} and Proposition~\ref{prop:Hoelder_Sobolev_prod} 
follow with a localization argument as applied in the second paragraph of the proof of Lemma~\ref{lemma:Hoelder_prod_comp_est}.

Let us conclude this appendix with some facts about measurability of Banach-space-valued random variables, which includes our framework of Sobolev and H\"older spaces. 
Therefore, consider a Banach space $(B,\|\cdot\|_B)$ with dual space $B^*$ and $X:\gO\rightarrow B$.
We recall that $X$ is called \emph{weakly measurable} if for every $\cG\in B^*$, 
the real-valued function $\cG(X)$ is measurable.
Furthermore, $X$ is called \emph{countably-valued} if $X$ assumes at most a countable set of values in $B$ on countably many, disjoint measurable subsets.
It is \emph{strongly measurable} if there exists a sequence of countably-valued mappings $(X_n,n\in\N)$, $X_n: \gO \rightarrow B$, such that $\lim_{n\to+\infty}X_n(\go)=X(\go)$ in $B$ for $\IP$-a.e.\ $\go\in\gO$,
and we say that $X$ is called \emph{$\IP$-almost separably-valued} if there exists a measurable set $N$ with $\IP(N)=0$ such that the set $\{X(\go),\go\in\gO\backslash N\}$ is separable in $B$ (cp.\ ~\cite[Definitions~3.5.3 and~3.5.4]{hille1974Funanasem}).
A well-known result on the connection of strong and weak measurability is Pettis' theorem (see, e.g., \cite[Theorem~3.5.3]{hille1974Funanasem}).
\begin{theorem}[Pettis' theorem]\label{thm:Pettis}
A $B$-valued mapping on $\gO$ is strongly measurable 
if and only if it is weakly measurable and $\IP$-almost separably-valued.
\end{theorem}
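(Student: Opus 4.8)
The plan is to prove both implications; the forward one is routine, and the reverse one rests on a Hahn--Banach argument.

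First I would dispatch the direction ``strongly measurable $\Rightarrow$ weakly measurable and $\IP$-almost separably-valued''. If $(X_n,n\in\N)$ is a sequence of countably-valued maps with $X_n(\go)\to X(\go)$ in $B$ for $\IP$-a.e.\ $\go$, then for every $\cG\in B^*$ the scalar functions $\cG(X_n)$ are measurable (a countably-valued map composed with a continuous functional is again countably-valued on measurable sets) and converge $\IP$-a.s.\ to $\cG(X)$, so $\cG(X)$ is measurable; hence $X$ is weakly measurable. For the separability, let $S_0$ be the (countable) union of the ranges of the $X_n$ and let $N_0$ be a null set off which $X_n\to X$ pointwise; then for $\go\in\gO\setminus N_0$ the value $X(\go)$ lies in the closure $\overline{S_0}$, a separable subset of $B$, so $X$ is $\IP$-almost separably-valued.

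For the converse, assume $X$ is weakly measurable and $\IP$-almost separably-valued, and fix a null set $N$ such that $B_0:=\overline{\Span}\{X(\go):\go\in\gO\setminus N\}$ is a separable closed subspace of $B$; fix a countable dense set $(y_k,k\in\N)$ in $B_0$. The key step is to produce a countable \emph{norming} family for $B_0$: for each $k$, the Hahn--Banach theorem supplies $\cG_k\in B^*$ with $\|\cG_k\|_{B^*}\leq 1$ and $\cG_k(y_k)=\|y_k\|_B$; then for every $z\in B_0$ one has $\|z\|_B=\sup_{k\in\N}|\cG_k(z)|$, the nontrivial inequality ``$\leq$'' following by approximating $z$ by the dense sequence $(y_k)$. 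Consequently, for each fixed $y\in B_0$ the function $\go\mapsto\|X(\go)-y\|_B=\sup_{k\in\N}|\cG_k(X(\go))-\cG_k(y)|$ is measurable on $\gO\setminus N$, being a countable supremum of measurable functions by weak measurability of $X$.

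With this distance measurability in hand, the approximating maps are built by a standard partition argument. For $n,k\in\N$ set $A_{n,k}:=\{\go\in\gO\setminus N:\|X(\go)-y_k\|_B<1/n\}$, measurable by the previous step, and pass to the disjoint sets $\tilde A_{n,k}:=A_{n,k}\setminus\bigcup_{j<k}A_{n,j}$. Since $(y_k)$ is dense in $B_0$ and $X(\go)\in B_0$ for $\go\notin N$, the family $(\tilde A_{n,k},k\in\N)$ partitions $\gO\setminus N$; define $X_n:=y_k$ on $\tilde A_{n,k}$ and $X_n:=0$ on $N$. Each $X_n$ is countably-valued, and $\|X_n(\go)-X(\go)\|_B<1/n$ for all $\go\notin N$, so $X_n\to X$ pointwise off the null set $N$; hence $X$ is strongly measurable. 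I expect the reverse direction to be the main obstacle, and within it the only nonelementary ingredient is precisely the Hahn--Banach construction of the countable norming family on the separable subspace $B_0$, which is what converts weak measurability together with separability into measurability of the relevant distance functions.
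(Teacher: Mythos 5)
Your proof is correct and is the standard argument for Pettis' measurability theorem: the forward direction via a.e.\ limits of countably-valued maps, and the converse via a countable Hahn--Banach norming family on the separable closed span, measurability of the distance functions $\go\mapsto\|X(\go)-y_k\|_B$, and the disjointified partition $\tilde A_{n,k}$ yielding countably-valued approximants within $1/n$ of $X$ off the null set. The paper gives no proof of its own here --- it simply cites \cite[Theorem~3.5.3]{hille1974Funanasem} --- and your argument is essentially the one found in that reference, so there is nothing to compare beyond noting that your write-up fills in a proof the paper deliberately omits.
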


The following lemma is the generalization to Banach spaces of the well-known property that real-valued random variables under continuous mappings are random variables, i.e., measurable. It is a direct consequence of the definition of strong measurability. 
 \begin{lemma}\label{lemma:comp_meas_maps}
  Let $B_1,B_2$ be Banach spaces and let $\varphi:B_1\rightarrow B_2$ 
  be continuous. 
  If $f:\gO\rightarrow B_1$ is strongly $B_1$-measurable, then $\varphi\circ f$
  is strongly $B_2$-measurable.
 \end{lemma}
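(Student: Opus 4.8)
The plan is to argue directly from the definition of strong measurability, without invoking Pettis' theorem. Recall that strong $B_1$-measurability of $f$ provides a sequence $(f_n,n\in\N)$ of countably-valued mappings $f_n:\gO\to B_1$ together with a measurable set $N$ with $\IP(N)=0$ such that $f_n(\go)\to f(\go)$ in $B_1$ for every $\go\in\gO\setminus N$. First I would fix such a sequence and such an exceptional set.

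Next I would check that $\vp\circ f_n$ is again countably-valued for each $n$. Writing $f_n=\sum_k b_k^{(n)}\mathbf{1}_{A_k^{(n)}}$ for countably many pairwise disjoint measurable sets $A_k^{(n)}$ and values $b_k^{(n)}\in B_1$, one has $\vp\circ f_n=\sum_k \vp(b_k^{(n)})\mathbf{1}_{A_k^{(n)}}$, which assumes at most countably many values on the same countable measurable partition of $\gO$; hence $\vp\circ f_n$ is countably-valued. Then I would use that $\vp$ is continuous: for every $\go\in\gO\setminus N$ we have $f_n(\go)\to f(\go)$ in $B_1$, so $\vp(f_n(\go))\to\vp(f(\go))$ in $B_2$. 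Consequently $(\vp\circ f_n,n\in\N)$ is a sequence of countably-valued mappings converging to $\vp\circ f$ $\IP$-almost everywhere, which is exactly the assertion that $\vp\circ f$ is strongly $B_2$-measurable.

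There is essentially no genuine obstacle here; the only points needing a line of justification are that post-composition with $\vp$ preserves the countably-valued structure (it keeps the underlying measurable partition and can only merge values) and that continuity of $\vp$ in the norm topologies transfers the pointwise convergence of the approximants. Alternatively, one could deduce the claim from Pettis' theorem (Theorem~\ref{thm:Pettis}) by verifying weak measurability and $\IP$-almost separable-valuedness of $\vp\circ f$, but the direct argument above is shorter and needs no separability input.
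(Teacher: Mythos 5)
Your argument is correct and is exactly what the paper has in mind: the paper gives no detailed proof, stating only that the lemma ``is a direct consequence of the definition of strong measurability,'' which is precisely the approximation-by-countably-valued-maps argument you spell out. No issues.
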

Since we consider in this manuscript measurability with respect to different Banach spaces, we write for clarity $B$-measurable where necessary.
We remark that a mapping $X:\gO\rightarrow B$
is Bochner integrable if and only if it is strongly $B$-measurable and 
the real-valued function $\|X\|_B$ is integrable, 
cp.\ ~\cite[Theorem~3.7.4]{hille1974Funanasem}.
The strong $B$-measurability of $X$ 
implies the measurability of~$\|X\|_B$.

\section{Integrability of continuous lognormal RFs}

Integrability of a lognormal random field in terms of $L^p(\gO)$-norms is a consequence of Fernique's theorem. While this was performed for random fields on domains in Euclidean space in~\cite[Proposition~3.10]{Charrier_SINUM12}, we derive the corresponding result on spheres in this appendix in the following proposition.

\begin{proposition}\label{prop:iLogRF_C^0bound}
Let $p\in[1,+\infty)$ and let $T$ be a continuous iGRF and satisfy \eqref{eq:AngSpecDec} for some $\gb>0$.
Then, the lognormal random fields $\exp(T)$ and $\exp(\Pi_L T)$ are in $L^p(\gO;C^0(\IS^2))$ 
for all $L\in\N_0$ and the $L^p(\gO;C^0(\IS^2))$-norm 
of $\exp(\Pi_L T)$ can be bounded independently of~$L$.
\end{proposition}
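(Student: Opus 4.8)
The plan is to derive everything from Fernique's theorem together with the elementary identity $\|\exp(v)\|_{C^0(\IS^2)}=\exp(\max_{x\in\IS^2}v(x))\le\exp(\|v\|_{C^0(\IS^2)})$ valid for $v\in C^0(\IS^2)$. First I would note that, since $T$ is continuous, it is a strongly $C^0(\IS^2)$-measurable Gaussian random element (Pettis' theorem, Theorem~\ref{thm:Pettis}, using that $C^0(\IS^2)$ is separable), and that by isotropy its mean is a constant function $c=\E(T)$, so $T=c+T_0$ with $T_0$ centered Gaussian in $C^0(\IS^2)$. Fernique's theorem then furnishes a $\lambda>0$ with $\E\bigl(\exp(\lambda\|T_0\|_{C^0(\IS^2)}^2)\bigr)<+\infty$. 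Combining this with $\|T\|_{C^0(\IS^2)}\le|c|+\|T_0\|_{C^0(\IS^2)}$ and the elementary bound $pt\le\lambda t^2+p^2/(4\lambda)$ for $t\ge 0$ gives $\E\bigl(\exp(p\|T\|_{C^0(\IS^2)})\bigr)<+\infty$ for every $p\in[1,+\infty)$; in particular $T\in L^q(\gO;C^0(\IS^2))$ for all $q$. Since $v\mapsto\exp(v)$ is continuous from $C^0(\IS^2)$ into itself, $\exp(T)$ is strongly $C^0(\IS^2)$-measurable by Lemma~\ref{lemma:comp_meas_maps}, and
\[
\|\exp(T)\|_{L^p(\gO;C^0(\IS^2))}^p=\E\bigl(\exp(p\max_{x\in\IS^2}T(x))\bigr)\le\E\bigl(\exp(p\|T\|_{C^0(\IS^2)})\bigr)<+\infty,
\]
which proves $\exp(T)\in L^p(\gO;C^0(\IS^2))$. (Only continuity of $T$ is used here; \eqref{eq:AngSpecDec} enters only insofar as it guarantees such a continuous modification exists via~\cite{LS15}.)

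For the projections, strong $C^0(\IS^2)$-measurability of $\exp(\Pi_LT)$ follows in the same way, using that $\Pi_LT\in C^\infty(\IS^2)$ is strongly $C^0(\IS^2)$-measurable and composing with $\exp$. The delicate point is the bound \emph{uniform in} $L$, for which I would not apply Fernique to $\Pi_LT$ directly — the resulting integrability constant would depend on $L$ — but instead exploit the independence structure of the \KL expansion. Write $T=\Pi_LT+R_L$ with $R_L:=T-\Pi_LT$. By the independence of the coefficients $\IA$ across $\ell$ and the fact that $a_{\ell m}$ is centered for $\ell>L$, the fields $\Pi_LT$ and $R_L$ are independent, $R_L$ has symmetric (hence mean-zero) law and is integrable, and therefore $\Pi_LT=\E(T\mid\cG_L)$ as a $C^0(\IS^2)$-valued conditional expectation, with $\cG_L:=\sigma(a_{\ell m},\ell\le L)$. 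The function $\Phi:C^0(\IS^2)\to[0,+\infty)$, $\Phi(v):=\exp(\lambda\|v\|_{C^0(\IS^2)}^2)$, is convex and continuous (a nondecreasing convex function of the convex map $v\mapsto\|v\|_{C^0(\IS^2)}$), so Jensen's inequality for Banach-space-valued conditional expectations gives $\Phi(\Pi_LT)\le\E(\Phi(T)\mid\cG_L)$ almost surely and hence, taking expectations and using the tower property, $\E\bigl(\exp(\lambda\|\Pi_LT\|_{C^0(\IS^2)}^2)\bigr)\le\E\bigl(\exp(\lambda\|T\|_{C^0(\IS^2)}^2)\bigr)<+\infty$ for every $L\in\N_0$. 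Exactly as in the first paragraph this controls $\E\bigl(\exp(p\|\Pi_LT\|_{C^0(\IS^2)})\bigr)$, and therefore $\|\exp(\Pi_LT)\|_{L^p(\gO;C^0(\IS^2))}$, by a constant independent of $L$.

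The main obstacle is precisely this uniformity: the finite-dimensional Gaussian $\Pi_LT$ has, a priori, only an $L$-dependent Fernique constant, and some extra structure must be injected. Two minor technical points need care in the write-up: verifying that $\E(T\mid\cG_L)=\Pi_LT$ in the Bochner sense — which rests on $T,\Pi_LT,R_L\in L^1(\gO;C^0(\IS^2))$ (a consequence of the Fernique estimate above, applied also to the finite truncations) together with the independence and symmetry of $R_L$ — and the precise form of Jensen's inequality for conditional expectations of Banach-valued random variables. An alternative route avoiding conditional expectations is to bound $\E\|\Pi_LT\|_{C^0(\IS^2)}$ by a Dudley-type entropy integral and then invoke a Gaussian concentration inequality (Borell's inequality), using that the canonical metric of $\Pi_LT$ is dominated by that of $T$ because $\E|T(x)-T(y)|^2=\E|\Pi_LT(x)-\Pi_LT(y)|^2+\E|R_L(x)-R_L(y)|^2$; I would nonetheless keep the conditioning argument as the primary one, as it is shorter and self-contained.
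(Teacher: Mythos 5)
Your proposal is correct in substance but takes a genuinely different route from the paper at the one point that matters, namely the uniformity in $L$. The paper also reduces to the centered case and invokes Fernique's theorem on $C^0(\IS^2)$, but it applies the \emph{quantitative} version \cite[Theorem~2.7]{DPZ2014} directly to each $X\in\{T,\Pi_LT\}$: the hypothesis of that theorem is verified with a radius $r_0$ and a parameter $\lambda$ chosen only in terms of a uniform bound $K$ on $\E\bigl(\|X\|_{C^0(\IS^2)}^2\bigr)$ supplied by Theorem~\ref{thm:LpRegIsoGRF}, via Chebyshev's inequality; since $K$, $r_0$, and $\lambda$ do not depend on $L$, neither does the resulting Fernique constant. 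So the obstacle you identify (``Fernique applied to $\Pi_LT$ gives an $L$-dependent constant'') is real only for the qualitative statement of Fernique; the paper circumvents it with the uniform second moment rather than with your conditioning argument. Your alternative --- writing $\Pi_LT=\E(T\mid\cG_L)$ using the independence and centeredness of the tail $R_L$ (note all coefficients with $\ell>L\geq 0$ are centered, so this is fine), and then applying Jensen's inequality for Banach-space-valued conditional expectations to the convex continuous function $v\mapsto\exp(\lambda\|v\|_{C^0(\IS^2)}^2)$ --- is valid and arguably slicker, at the cost of importing vector-valued Jensen, which the paper's toolbox does not contain. Two points you should not gloss over in a full write-up: (i) to apply Fernique to $T$ at all you must verify that the law of the centered part is a genuinely Gaussian measure on $C^0(\IS^2)$, i.e., that $\cG(T)$ is Gaussian for every $\cG\in C^0(\IS^2)^*$; the paper spends half of its proof on exactly this (Riesz representation of $\cG$, uniform boundedness of $\sigma^2_{\cG,L}$ via $\sum_{|m|\le\ell}|Y_{\ell m}|^2=(2\ell+1)/(4\pi)$, and L\'evy's continuity theorem), and your phrase ``Gaussian random element'' is asserting rather than proving it; (ii) the identification $\E(T\mid\cG_L)=\Pi_LT$ in the Bochner sense needs $T\in L^1(\gO;C^0(\IS^2))$, which you correctly source from Theorem~\ref{thm:LpRegIsoGRF} (or from the Fernique estimate itself). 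With those two items made explicit, your argument is a complete and correct alternative proof.
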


\begin{proof}
  It will be sufficient to prove the case that $T$ is centered, i.e., $\E(T) = 0 \in C^0(\IS^2)$.
  By the definition of an iGRF $\E(T)$ is a constant function on $\IS^2$,
  which implies 
  $\|\exp(T)\|_{L^p(\gO;C^0(\IS^2))} = \|\exp(T-\E(T))\|_{L^p(\gO;C^0(\IS^2))}\exp(\E(T))$.
  Hence, the general case can be reduced to the case of a centered, continuous iGRF.
  So in the following we can assume that $T$ is centered.
  
  The idea of the proof is to apply Fernique's theorem, cp.\ ~\cite[Theorem~2.7]{DPZ2014}, 
  on the separable Banach space $C^0(\IS^2)$. 
  Therefore, we have to establish that the law of $T$ is a centered 
  (symmetric) Gaussian measure on $C^0(\IS^2)$, 
  i.e., for every $\cG\in C^0(\IS^2)^*$, the dual space of $C^0(\IS^2)$, 
  there exists $\sigma_\cG\in[0,+\infty)$ such that $\cG(T) \sim \cN(0,\sigma^2_\cG)$.
  This is the first requirement in order to apply~\cite[Theorem~2.7]{DPZ2014}.
  We remark that in~\cite{DPZ2014} the term 'symmetric' Gaussian measure is used instead of centered meaning the same. 
  For every $L\in\N_0$, $\Pi_L T$ has the finite real expansion according to~\eqref{eq:real_exp}
  \begin{equation*}
  \Pi_L T=
   \sum_{\ell=0}^L \Bigl( a_{\ell 0} Y_{\ell 0}
    + 2 \sum_{m=1}^\ell \left(\Re a_{\ell m} \Re Y_{\ell m} - \Im a_{\ell m} \Im Y_{\ell m}\right)\Bigr).
  \end{equation*}
From the properties of the \KL expansion we deduce that $\{a_{\ell 0},\Re a_{\ell m},\Im a_{\ell m},\ell\in\N_0,m=1,\dots,\ell\}$ 
are independent real-valued random variables. 
Additionally this corollary implies that $a_{\ell 0}\sim \cN(0,A_\ell)$ and $\Re a_{\ell m}$, $\Im a_{\ell m}\sim \cN(0,A_\ell/2)$ for $\ell\in\N_0$ and $m=1,\dots,\ell$.
Let $L\in\N_0$ and $\cG\in C^0(\IS^2)^*$ be arbitrary.
Hence,
\begin{equation*}
 \cG(\Pi_L T)= \sum_{\ell=0}^L \Bigl( a_{\ell 0} \cG(Y_{\ell 0})
    + 2 \sum_{m=1}^\ell \left(\Re a_{\ell m} \cG(\Re Y_{\ell m}) - \Im a_{\ell m} \cG(\Im Y_{\ell m})\right)\Bigr)\sim \cN(0,\sigma^2_{\cG,L})
\end{equation*}
and therefore the characteristic function $\varphi_{\cG,L}$ of $\cG(\Pi_L T)$ is given by
\begin{equation*}
\lambda \mapsto \varphi_{\cG,L}(\lambda) 
      := \exp\Bigl(-\frac{1}{2} \lambda^2 \sigma^2_{\cG,L}\Bigr), 
\end{equation*}
where 
\begin{equation*}
 \sigma^2_{\cG,L}
 =\sum_{\ell=0}^L A_\ell\Bigl(\cG(Y_{\ell 0})^2 + 2\sum_{m=1}^\ell ( \cG(\Re Y_{\ell m})^2 + \cG(\Im Y_{\ell m})^2 )\Bigr).
\end{equation*}
Thus,
$\Pi_L T$ is a centered Gaussian measure on $C^0(\IS^2)$ for every $L\in\N_0$.
The next step is to show that the sequence 
$(\sigma^2_{\cG,L},L\in\N_0)$ is uniformly bounded.
The Riesz representation theorem 
for $C^0(\IS^2)$ (cp.\ ~\cite[Theorem~7.10.4]{Bogachev07}) 
and~\cite[Theorem~3.1.1, Remark~3.1.5]{Bogachev07} imply 
that there exist a finite, positive measure $\nu$
on $(\IS^2,\cB(\IS^2))$ and a measurable function $g$ satisfying 
$|g(x)| =1$ for every $x \in \IS^2$ such that
$\cG(v)=\int_{\IS^2} v g \dd \nu$ for every $v\in C^0(\IS^2)$, 
which implies with the Cauchy--Schwarz inequality that for every $v\in C^0(\IS^2)$,
\begin{equation*}
 \cG(v)^2 
 =\Bigl(\int_{\IS^2}vg \dd\nu\Bigr)^2 
 \leq \|v\|^2_{L^2(\IS^2,\nu)}\|g\|^2_{L^2(\IS^2,\nu)}
 = \|v\|^2_{L^2(\IS^2,\nu)}\nu(\IS^2).
\end{equation*}
This implies with the identity 
$\sum_{|m| \leq \ell} \left| Y_{\ell m}(x) \right|^2 = (2\ell+1)/(4\pi)$ (cp.\ ~\cite[Theorem~2.4.5]{Ned2001})
that
\begin{align*}
 &\cG(Y_{\ell 0})^2 + 2\sum_{m=1}^\ell ( \cG(\Re Y_{\ell m})^2 + \cG(\Im Y_{\ell m})^2 )\\
 &\qquad\leq\Bigl( \int_{\IS^2} Y_{\ell 0}^2 + 2\sum_{m=1}^\ell( (\Re Y_{\ell m})^2 + (\Im Y_{\ell m})^2 )\dd\nu\Bigr) \nu(\IS^2)\\
 &\qquad=\Bigl(\int_{\IS^2} \sum_{m=-\ell}^\ell |Y_{\ell m}|^2 \dd \nu\Bigr) \nu(\IS^2)
 = \nu(\IS^2)^2 \frac{2\ell+1}{4\pi}.
\end{align*}
Summing the previous inequality over $\ell$ 
implies with the finiteness of $ \sum_{\ell\geq0} A_\ell \frac{2\ell+1}{4\pi}$
that $(\sigma^2_{\cG,L},L\in\N_0)$ is uniformly bounded in $L$.
Hence, there exists a unique $\sigma_\cG\in[0,+\infty)$ such that $\sigma^2_{\cG,L}\rightarrow \sigma^2_\cG$ as $L\to +\infty$.
Thus, 
$\lim_{L\to +\infty}\varphi_{\cG,L}(\lambda)=\exp(-1/2\;\lambda^2\sigma^2_\cG)=:\varphi_\cG(\lambda)$ 
for every $\lambda\in\R$. 
The $L^2(\gO;C^0(\IS^2))$-convergence of $\Pi_L T \rightarrow T$, 
which is implied by Theorem~\ref{thm:LpRegIsoGRF},
yields that $\cG(\Pi_L T) \rightarrow \cG(T)$ in $L^2(\gO)$ and thus in distribution.
L\'evy's continuity theorem, cp.\ ~\cite[Theorem~IV.13.2.B]{loeve1977ProtheI}, 
implies that $\cG(T)\sim \cN(0,\sigma_\cG^2)$ 
and we conclude that the law of $T$ is a centered (symmetric) Gaussian measure on $C^0(\IS^2)$.

We infer from Theorem~\ref{thm:LpRegIsoGRF} that there exists an upper bound $K$ of the $L^2(\gO;C^0(\IS^2))$-norm of $T$ and of $\Pi_L T$, 
$L\in\N_0$, which is uniform in $L$.
Let in the following $X\in\{T,\Pi_L T , L\in\N_0\}$.
We choose $x_0\in[1/(1+\exp(-2)),1)$, which implies that $\log((1-x_0)/x_0)\leq-2$,
and set $r_0:=K/\sqrt{1-x_0}$.
We use the Chebychev inequality to obtain that
\begin{equation*}
 1-\IP(\|X\|_{C^0(\IS^2)}\leq r_0)
  =\IP(\|X\|_{C^0(\IS^2)}>r_0) 
  \leq \frac{\E(\|X\|_{C^0(\IS^2)}^2)}{r_0^2}
  \le \frac{K^2}{r_0^2}
  =1-x_0,
\end{equation*}
which implies that $\IP(\|X\|_{C^0(\IS^2)}\leq r_0)\geq x_0$.
We choose $\lambda>0$ such that $\lambda \le (1-x_0)/(32K^2)$, which implies that $32\lambda r_0^2\leq 1$,
and arrive with the monotonicity of the logarithm at the inequality
\begin{equation*}
 \log\Bigl(\frac{1-\IP(\|X\|_{C^0(\IS^2)}\leq r_0)}{\IP(\|X\|_{C^0(\IS^2)}\leq r_0)}\Bigr) + 32\lambda r_0^2\leq \log\Bigl(\frac{1-x_0}{x_0}\Bigr) + 32\lambda r_0^2 \leq -1
 .
\end{equation*}
This is the second requirement for~\cite[Theorem~2.7]{DPZ2014}. Since $X$ is a centered Gaussian measure on $C^0(\IS^2)$, ~\cite[Theorem~2.7]{DPZ2014} 
implies that
\begin{equation*}
 \E(\exp(\lambda\|X\|^2_{C^0(\IS^2)}))\leq \exp(16\lambda r_0^2) + \frac{\exp(2)}{\exp(2)-1},
\end{equation*}
which is a bound that is independent of $L$, because the choices of $r_0$ and $\lambda$ do not depend on $L$ due to the uniformity of the bound $K$.
Since $0\leq (\sqrt{\lambda}x - p/(2\sqrt{\lambda}))^2$ 
implies that $px\leq \lambda x^2 + p^2/(4\lambda)$ for every $x\in\R$, we conclude that
\begin{equation*}
\E(\|\exp(X)\|_{C^0(\IS^2)}^p)\leq \E(\exp(p\|X\|_{C^0(\IS^2)}))
\leq \E(\exp(\lambda\|X\|^2_{C^0(\IS^2)})\exp\Bigl(\frac{p^2}{4\lambda}\Bigr),
\end{equation*}
which finishes the proof of the proposition.
\end{proof}

\section{Higher order regularity of solutions}
\label{app:proof_reg_est}

In this appendix we present the proof of Proposition~\ref{prop:solution_map_regularity}, which we divide for better readability into one lemma and two propositions.
Let us start with the $H^{1+s}(\IS^2)$-regularity of the solution for $s \in [0,1)$. This is derived 
with a classical regularity estimate, which in the case of domains in Euclidean space is
due to Hackbusch, cp.\ ~\cite[Theorem~9.1.8]{hackbusch2010Elldifequ} 
(see also~\cite{GilTrud2ndEd}). Here we transfer the problem to Euclidean space and back with an atlas and a partition of unity.

\begin{lemma}\label{lemma:reg_est}
 For some $0\leq s<\gamma< 1$, let $\tilde{u}\in H^1(\IS^2)/\R$, $f\in H^{-1+s}(\IS^2)$, and 
 $\tilde{a}\in C^{0,\gamma}(\IS^2)\cap C^0_+(\IS^2)$ 
 satisfy the variational problem~\eqref{eq:ellPDEweak}
 %
then $\tilde{u}\in H^{1+s}(\IS^2)$ and there exists a constant $C$, 
which is independent of $\tilde{u},f$, and $\tilde{a}$, such that
\begin{equation*}
 \|\tilde{u}\|_{H^{1+s}(\IS^2)} 
 \leq 
 C 
\Bigl(
 \frac{1}{\min_{x\in\IS^2}\tilde{a}(x)}
 ( \|\tilde{a}\|_{C^{0,\gamma}(\IS^2)}  \|\tilde{u}\|_{H^{1}(\IS^2)}
 + \|f\|_{H^{-1+s}(\IS^2)})
 + \|\tilde{u}\|_{H^1(\IS^2)}\Bigr)
 .
\end{equation*}
\end{lemma}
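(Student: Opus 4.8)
The plan is to reduce the assertion to the classical $H^{1+s}$-regularity estimate for divergence-form second order elliptic equations with H\"older continuous coefficients on Euclidean domains, due to Hackbusch (cp.\ \cite[Theorem~9.1.8]{hackbusch2010Elldifequ}), by localizing with the finite atlas $\{(U_i,\eta_i),i\in\cI\}$ and the subordinate partition of unity $\{\Psi_i,i\in\cI\}$ introduced in Appendix~\ref{app:meas_func-ana}, and then reassembling via the equivalent norm characterizations of $H^s_q(\IS^2)$ and $C^{0,\gg}(\IS^2)$ through pullbacks. Since $H^{1}(\IS^2)=H^{1+s}(\IS^2)$ for $s=0$, we may assume $s\in(0,\gg)$ throughout.

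\emph{Localization of the coefficient.} Fix $i\in\cI$, write $\hat U_i:=\eta_i(U_i)$, and for a function $v$ on $\IS^2$ abbreviate $\hat v:=v\circ\eta_i^{-1}$ on $\hat U_i$. Expressing the weak formulation~\eqref{eq:ellPDEweak} in the chart $\eta_i$ by means of the local representations of $\nabla_{\IS^2}$ and $\dd\sigma$ recalled in Appendix~\ref{app:meas_func-ana}, the bilinear form becomes $\int_{\hat U_i}\sum_{k,\ell=1}^2\hat A_i^{k\ell}\,\partial_{\hat x^k}\hat u\,\partial_{\hat x^\ell}\hat v\,\dd\hat x$ with the symmetric matrix field $\hat A_i^{k\ell}:=\bigl(\tilde a\sqrt{|g|}\,g^{k\ell}\bigr)\circ\eta_i^{-1}$. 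Since $g$ and $\hat U_i$ are fixed by the geometry of $\IS^2$ and the atlas, $\hat A_i$ is uniformly elliptic on $\hat U_i$ with ellipticity constant bounded below by $c_g\min_{x\in\IS^2}\tilde a(x)$ and from above by $C_g\|\tilde a\|_{C^0(\IS^2)}$, and by the algebra property of H\"older spaces one has $\|\hat A_i\|_{C^{0,\gg}(\hat U_i)}\le C_g\|\tilde a\|_{C^{0,\gg}(\IS^2)}$, where $c_g,C_g>0$ depend only on the atlas.

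\emph{The equation for the localized solution.} Set $w_i:=(\Psi_i\tilde u)\circ\eta_i^{-1}$, extended by zero to a compactly supported function in $H^1(\R^2)$ with $\|w_i\|_{H^1(\R^2)}\le C\|\tilde u\|_{H^1(\IS^2)}$. Using a fattened partition of unity $\{\hat\Psi_i\}$ with $\hat\Psi_i\equiv1$ on $\supp\Psi_i$, exactly as in the proof of Lemma~\ref{lemma:Hoelder_prod_comp_est}, testing~\eqref{eq:ellPDEweak} with functions supported near $\supp\Psi_i$ and applying the Leibniz rule to the cutoff, one finds that $w_i$ is a weak solution on $\R^2$ of
\begin{equation*}
 -\sum_{k,\ell=1}^2\partial_{\hat x^k}\bigl(\hat A_i^{k\ell}\,\partial_{\hat x^\ell}w_i\bigr)=F_i
 ,
\end{equation*}
where $F_i$ is the sum of the localized contribution of $f$ and the commutator terms. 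The latter are products of $\hat u$ with a $C^{0,\gg}(\R^2)$ field, hence lie in $H^s(\R^2)$ by Proposition~\ref{prop:Hoelder_Sobolev_prod} and $H^1(\IS^2)\hookrightarrow H^s(\IS^2)$, or products of $\nabla\hat u\in L^2(\R^2)$ with a $C^{0,\gg}(\R^2)$ field, hence lie in $L^2(\R^2)\hookrightarrow H^{-1+s}(\R^2)$; as exactly one commutator term is differentiated once, $F_i\in H^{-1+s}(\R^2)$ with
\begin{equation*}
 \|F_i\|_{H^{-1+s}(\R^2)}\le C\bigl(\|f\|_{H^{-1+s}(\IS^2)}+\|\tilde a\|_{C^{0,\gg}(\IS^2)}\|\tilde u\|_{H^1(\IS^2)}\bigr)
 ,
\end{equation*}
the localized contribution of $f$ being controlled through the dual pullback characterization of $H^{1-s}(\IS^2)$ together with Proposition~\ref{prop:Hoelder_Sobolev_prod}; the constant $C$ is independent of $\tilde u$, $f$, and $\tilde a$.

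\emph{Euclidean regularity and reassembly.} As $s<\gg$, the coefficient $\hat A_i\in C^{0,\gg}(\R^2)$ is uniformly elliptic and $F_i\in H^{-1+s}(\R^2)$, so \cite[Theorem~9.1.8]{hackbusch2010Elldifequ} applies to the compactly supported weak solution $w_i$ and gives $w_i\in H^{1+s}(\R^2)$ together with an estimate which, after making the dependence of the constant on the ellipticity constant and on $\|\hat A_i\|_{C^{0,\gg}}$ explicit through the bounds of the previous step, reads
\begin{equation*}
 \|w_i\|_{H^{1+s}(\R^2)}\le C\Bigl(\frac{1}{\min_{x\in\IS^2}\tilde a(x)}\bigl(\|\tilde a\|_{C^{0,\gg}(\IS^2)}\|w_i\|_{H^1(\R^2)}+\|F_i\|_{H^{-1+s}(\R^2)}\bigr)+\|w_i\|_{H^1(\R^2)}\Bigr)
 .
\end{equation*}
Inserting the bounds on $\|w_i\|_{H^1(\R^2)}$ and $\|F_i\|_{H^{-1+s}(\R^2)}$ and using $\|\tilde a\|_{C^0(\IS^2)}\le\|\tilde a\|_{C^{0,\gg}(\IS^2)}$, the right hand side of the claimed inequality is obtained as an upper bound for $\|w_i\|_{H^{1+s}(\R^2)}$ for each $i\in\cI$. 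Since $\cI$ is finite and $\tilde u=\sum_{i\in\cI}\Psi_i\tilde u$, the equivalent norm characterization of $H^{1+s}(\IS^2)$ via pullbacks yields $\|\tilde u\|_{H^{1+s}(\IS^2)}\le C\max_{i\in\cI}\|w_i\|_{H^{1+s}(\R^2)}$, proving the lemma with $C$ depending only on $s$, $\gg$, the atlas, and the partition of unity. The main obstacle is purely bookkeeping: one must track that the ellipticity constant of $\hat A_i$ contributes precisely the factor $1/\min_{x\in\IS^2}\tilde a(x)$, that every commutator term is absorbed into $C\|\tilde a\|_{C^{0,\gg}(\IS^2)}\|\tilde u\|_{H^1(\IS^2)}$, and that the distributional datum $f\in H^{-1+s}(\IS^2)$ is correctly localized and pulled back, for which the dual pullback characterization and Proposition~\ref{prop:Hoelder_Sobolev_prod} are the essential tools.
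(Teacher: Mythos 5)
Your proposal is correct and follows essentially the same route as the paper: localize with the atlas and partition of unity, derive the divergence-form equation for $(\Psi_i\tilde u)\circ\eta_i^{-1}$ with commutator terms controlled via the H\"older--Sobolev product estimate, apply the Euclidean $H^{1+s}$-regularity result for H\"older-continuous coefficients, and reassemble. The only detail you gloss over, which the paper handles explicitly with a cutoff $\chi$ gluing the pulled-back coefficient to a constant elliptic matrix, is the extension of $\hat A_i$ from $\eta_i(U_i)$ to all of $\R^2$ while preserving uniform ellipticity and the $C^{0,\gg}$ bound, which is needed before the whole-space regularity estimate can be invoked.
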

\begin{proof}
 Let $\{(\eta_j,U_j),j\in\mathcal{I}\}$ be a $C^\infty$-atlas and $\{\Psi_j,j\in\mathcal{I}\}$
 be a subordinate, $C^\infty$ partition of unity. 
 Let us fix $i\in\mathcal{I}$.
 We observe with the product rule, i.e., $\nabla\cdot(v W) = \nabla v\cdot W + v\nabla\cdot W$ 
 for scalar and vector fields $v$ and $W$, 
 the divergence theorem, cp.\ \cite[Equation (2.4.185)]{Ned2001}, 
 and~\eqref{eq:ellPDEweak} that 
 $\tilde{u}\Psi_i$ satisfies for every $v\in H^1(\IS^2)/\R$ that
 \begin{align*}
  (\tilde{a}\nabla_{\IS^2}(\tilde{u}\Psi_i),\nabla_{\IS^2} v )
  &=
  (\tilde{a}\nabla_{\IS^2}\tilde{u}, \nabla_{\IS^2}(v\Psi_i))
  -(\tilde{a}\nabla_{\IS^2}\tilde{u}\cdot\nabla_{\IS^2}\Psi_i,v)
  +(\tilde{a}\tilde{u}\nabla_{\IS^2}\Psi_i, \nabla_{\IS^2} v)\\
  &=
  f(v\Psi_i - \frac{1}{|\IS^2|}\int v\Psi_i\mathrm d \sigma )
  -(\tilde{a}\nabla_{\IS^2}\tilde{u}\cdot\nabla_{\IS^2}\Psi_i,v)
  +(\tilde{a}\tilde{u}\nabla_{\IS^2}\Psi_i, \nabla_{\IS^2} v)\\
  &=
  f(v \Psi_i) - \frac{1}{|\IS^2|} f(1)(\Psi_i,v )
  -(\tilde{a}\nabla_{\IS^2}\tilde{u}\cdot\nabla_{\IS^2}\Psi_i,v)
  +(\tilde{a}\tilde{u}\nabla_{\IS^2}\Psi_i, \nabla_{\IS^2} v) 
  ,
 \end{align*}
where we remark that for every $v\in H^1(\IS^2)/\R$, it holds that
$v\Psi_i - 1/{|\IS^2|}\int v\Psi_i\mathrm d \sigma \in H^1(\IS^2)/\R$.
Let $V_i:=\eta_i(U_i)$ and let $D\subset\R^2$ with smooth boundary 
be such that $\supp(\Psi_i\circ\eta_i^{-1})\subset\subset D \subset\subset V_i$.
We recall that for two functions $w_1,w_2:\IS^2\rightarrow\R$, the first fundamental form 
of their gradients satisfies with respect to the coordinate chart $\eta_i$ that on $V_i$ it holds that
\begin{equation*}
 (\nabla_{\IS^2} w_1\cdot\nabla_{\IS^2} w_2)\circ\eta_i^{-1} 
 = \sum_{k,\ell=1}^2
 g^{k\ell} \circ\eta_i^{-1}\frac{\partial (w_1\circ\eta_i^{-1}) }{\partial x^k}\frac{\partial (w_2\circ\eta_i^{-1}) }{\partial x^\ell}
 .
\end{equation*}
Furthermore, there exists a constant $\lambda_g>0$ such that for every $y\in U_i$, 
$\sum_{k,\ell=1}^2g^{k\ell}(y) \xi_k\xi_\ell\geq\lambda_g \sum^2_{k=1}\xi_k^2$
for every $\xi\in T_{y}\IS^2$. 
We also recall that with respect to the coordinate chart $\eta_i$ it holds that 
$\mathrm d\sigma(y)=\sqrt{|g|(y)}\mathrm d x$, where $|g|(y)=\det{(g(y))}$ and $y=\eta_i^{-1}(x)$, and $|g|(y)>0$ for every $x\in V_i$.
We choose $\chi\in C^\infty(\R^2)$ such that $\chi=1$ on $\supp(\Psi_i\circ\eta_i^{-1})$ 
and $\chi=0$ on the complement of $D$.
We define the matrix-valued function
\begin{equation*}
 A := 
 \begin{cases}
  ((\sqrt{|g|}\tilde{a}g^{-1})\circ\eta_i^{-1})\;\chi + \min_{y\in U_i}\{\sqrt{|g|(y)}\tilde{a}(y)\}\lambda_g\;(1-\chi)\Id_{\R^2}
  &\quad \text{on $V_i$} \\
  \min_{y\in U_i}\{\sqrt{|g|}\tilde{a}\}\lambda_g\;(1-\chi)\Id_{\R^2}
  &\quad \text{else}
 \end{cases}
\end{equation*}
and the functions
\begin{align*}
 b := \begin{cases}  
     ((\sqrt{|g|}\tilde{a})\circ\eta_i^{-1})\chi\;
    \sum_{k,l=1}^2
    g^{k\ell}\circ\eta_i^{-1} \frac{\partial (\tilde{u}\circ\eta_i^{-1}) }{\partial x^k}\frac{\partial (\Psi_i\circ\eta_i^{-1}) }{\partial x^\ell}
    & \text{ on $V_i$}\\
    \chi
    & \text{ else} \end{cases}
 ,\\
 c := \begin{cases}
    ((\sqrt{|g|}\tilde{a}\tilde{u})\circ\eta_i^{-1})\;\chi\;
    \sum_{k=1}^2
    (g^{k1}, g^{k2})^\top \circ\eta_i^{-1} \frac{\partial (\Psi_i\circ\eta_i^{-1}) }{\partial x^k}  
    & \text{ on $V_i$}\\
    (\chi,\chi)^\top 
    & \text{ else } \end{cases}
    .
\end{align*}
We use these three functions to define the functional $F$ for every $w\in H^{1}(\R^2)$ by
\begin{equation}\label{eq:appendix_reg_est_RHS_def}
 w \mapsto F(w)
 :=
 f(((w\chi)\circ\eta_i)\Psi_i) 
 - \frac{1}{|\IS^2|}f(1)(\Psi_i,(w\chi)\circ\eta_i)
 - \int_{\R^2} b w\; \mathrm d x
 + \int_{\R^2} c \cdot \nabla w\; \mathrm d x
 .
\end{equation}
We observe that for every $w\in H^1(\R^2)$, the function $((w\chi)\circ\eta_i)$ can be extended to a function $\tilde{w}\in H^1(\IS^2)/\R$, 
which then satisfies that
\begin{equation*}
 F(w)
 =
 f(\tilde{w} \Psi_i) - \frac{1}{|\IS^2|} f(1)(\Psi_i,\tilde{w} )
  -(\tilde{a}\nabla_{\IS^2}\tilde{u}\cdot\nabla_{\IS^2}\Psi_i,\tilde{w})
  +(\tilde{a}\tilde{u}\nabla_{\IS^2}\Psi_i, \nabla_{\IS^2} \tilde{w})
\end{equation*}
and 
\begin{equation*}
  \int_{V_i} A\nabla ((u\Psi_i)\circ\eta_i^{-1}) \cdot \nabla w\; \mathrm d x
 = (\tilde{a}\nabla_{\IS^2}(u\Psi_i),\nabla_{\IS^2} \tilde{w})
 ,
\end{equation*}
where we used that $\chi=1$ on $\supp(\Psi_i\circ\eta_i^{-1})$.
Since $\supp(\Psi_i\circ\eta_i^{-1})\subset V_i$, we obtain that
\begin{equation}\label{eq:appendix_reg_est_PDE_R2}
 \int_{\R^2} A\nabla ((u\Psi_i)\circ\eta_i^{-1}) \cdot \nabla w\; \mathrm d x
 =
 F(w)
 \quad \forall w\in H^1(\R^2)
 .
\end{equation}
We now aim to prove finiteness of the $H^{-1+s}(\R^2)$-norm of $F$ and to find a suitable bound.
Let $\{\hat{\Psi}_j,j\in\mathcal{I}\}$ be another partition of unity subordinate to the open cover
$\{U_j,j\in\mathcal{I}\}$ such that $\hat{\Psi}_i\circ\eta_i^{-1}=1$ on $\supp(\chi)\supset \supp(\Psi_i\circ\eta_i^{-1})$,
which necessarily implies that $\hat{\Psi}_j=0$ on $\supp(\Psi_i)$ for every $j\neq i$.
Thus we obtain with the characterization of the $H^{1-s}(\IS^2)$-norm on chart domains, 
the partition of unity property of  $\{\hat{\Psi}_j,j\in\mathcal{I}\}$,
Proposition~\ref{prop:Hoelder_Sobolev_prod}, and~\cite[Theorem~3.3.2(ii)]{T83} 
that there are constants $C_1,C_2,C_3$ such that 
for every $w\in H^{1-s}(\R^2)$, it holds that
\begin{align*}
 |f(((w\chi)\circ\eta_i)\Psi_i)|
 &\leq C_1
 \|f\|_{H^{-1+s}(\IS^2)}
 \|(((w\chi)\circ\eta_i)\hat{\Psi}_i)\circ\eta_i^{-1}\|_{H^{1-s}(V_i)}
 \|\Psi_i\|_{C^1(\IS^2)}
 \\
 &\leq C_2
 \|f\|_{H^{-1+s}(\IS^2)}
 \|w\chi\|_{H^{1-s}(V_i)}
 \|\hat{\Psi}_i\circ\eta_i^{-1}\|_{C^1(\overline{V_i})}
 \\
 &\leq C_3
 \|f\|_{H^{-1+s}(\IS^2)}
 \|w\|_{H^{1-s}(\R^2)}
 \|\chi\|_{C^1(\overline{V_i})}
 .
\end{align*}
The fourth summand in the definition of $F$ in~\eqref{eq:appendix_reg_est_RHS_def} can be written in a distributional sense as 
$w\mapsto -\int_{\R^2}(\nabla\cdot c )w \;\mathrm dx$, where we applied that $c$ is compactly supported in $V_i$.
Note that for $\ell=1,2$ and $s\in\R$, the linear operators $\frac{\partial}{\partial x^\ell}:H^{s}(\R^2)\rightarrow H^{s-1}(\R^2)$
are bounded.
Hence, we conclude as in the proof of Proposition~\ref{prop:Hoelder_Sobolev_prod} 
with~\cite[Theorem~3.3.2(ii)]{T83}
and the property that $\chi=1$ on $\supp(\hat{\Psi}_i\circ\eta_i^{-1})$
that there exist constants $C_1,C_2,C_3,C_4$ such that
\begin{align*}
 \|\nabla\cdot c\|_{H^{-1+s}(\R^2)}
 \leq
 C_1 \|c\|_{H^s(\R^2)}
 &\leq 
 C_2 
 \|(\tilde{a}\circ\eta_i^{-1})\chi\|_{C^{0,\gamma}(\overline{V_i})}
 \|\Psi_i\circ\eta_i^{-1}\|_{C^1(\overline{V_i})}
 \|(\tilde{u}\circ\eta_i^{-1})\chi\|_{H^s(V_i)}
 \\
 &\leq
 C_3
 \|(\tilde{a}\hat{\Psi}_i)\circ\eta_i^{-1}\|_{C^{0,\gamma}(\overline{V_i})}
 \|(\tilde{u}\hat{\Psi}_i)\circ\eta_i^{-1}\|_{H^s(V_i)}
 \|\chi\|^2_{C^{0,\gamma}(\overline{V_i})}
 \\
 &\leq
 C_4
 \|\tilde{a}\|_{C^{0,\gamma}(\IS^2)}
 \|\tilde{u}\|_{H^s(\IS^2)}
 ,
\end{align*}
where we applied that derivatives of smooth compactly supported functions, e.g., $\Psi_i\circ\eta_i^{-1}$ and $\chi$,
are bounded. Their norms have been included into the constants appearing in the above inequalities. 
The $H^{-1+s}(\R^2)$-norm of the third summand in~\eqref{eq:appendix_reg_est_RHS_def} can be treated similarly,
i.e., there exists a constant $C$ such that $\|b\|_{H^{-1+s}(\R^2)}\leq C  \|\tilde{a}\|_{C^0(\IS^2)}\|\tilde{u}\|_{H^1(\IS^2)}$.
The second summand in~\eqref{eq:appendix_reg_est_RHS_def} poses no difficulty.
Hence, we conclude that $F\in H^{-1+s}(\R^2)$ and that there exists a constant $C$, which is independent of $\tilde{a}, \tilde{u}$, and $f$, such that
\begin{equation}\label{eq:appendix_reg_est_RHS_est}
 \|F\|_{H^{-1+s}(\R^2)}
 \leq
 C( \|f\|_{H^{-1+s}(\IS^2)}
 +\|\tilde{a}\|_{C^0(\IS^2)}\|\tilde{u}\|_{H^1(\IS^2)}
 +\|\tilde{a}\|_{C^{0,\gamma}(\IS^2)}\|\tilde{u}\|_{H^s(\IS^2)})
 .
\end{equation}
We observe that for every $\xi\in\R^2$, it holds that 
$\xi^\top A\xi \geq \lambda_g \min_{x\in V_i} \sqrt{|g(x)|} \min_{x\in\IS^2}\tilde{a}(x)\xi^\top\xi$ on $\R^2$.
Since the matrix-valued function $A$ is constant on the complement of $V_i$, we observe that there exists a constant $C$ such that
\begin{equation}\label{eq:appendix_reg_est_coeff_matrix_est}
 \sup_{x,y\in\R^2, x\neq y} \frac{\|A(x) - A(y)\|_{\R^{2\times 2}}}{\|x-y\|_{\R^2}^\gamma}
 \leq C \|\tilde{a}\|_{C^{0,\gamma}(\IS^2)}
 .
\end{equation}
We are now in the situation to apply the regularity estimate in~\cite[Lemma~3.2]{ChScTe_SINUM13} to the 
problem in~\eqref{eq:appendix_reg_est_PDE_R2}, which implies that $(u\Psi_i)\circ\eta_i^{-1}\in H^{1+s}(\R^2)$. 
Also it implies together with the estimates in~\eqref{eq:appendix_reg_est_RHS_est} 
and in~\eqref{eq:appendix_reg_est_coeff_matrix_est}
that there exist constants $C_1,C_2,C_3$ such that
\begin{align*}
 \|(\tilde{u}\Psi_i)\circ\eta_i^{-1}\|_{H^{1+s}(\R^2)}
 &\leq C_1 
 \frac{1}{\min_{x\in\IS^2}\tilde{a}(x)} 
 ( \|\tilde{a}\|_{C^{0,\gamma}(\IS^2)}  \|(\tilde{u}\Psi_i)\circ\eta_i^{-1}\|_{H^{1}(\R^2)}
 +\|F\|_{H^{-1+s}(\R^2)})
 \\
 &\quad
 +C_1 \|(\tilde{u}\Psi_i)\circ\eta_i^{-1}\|_{H^{1}(\R^2)}
 \\
 &\leq C_2
 \frac{1}{\min_{x\in\IS^2}\tilde{a}(x)}
 (\|\tilde{a}\|_{C^{0,\gamma}(\IS^2)}  \|\tilde{u}\|_{H^{1}(\IS^2)}
 + \|f\|_{H^{-1+s}(\IS^2)}
 \\
 &\quad 
 +\|\tilde{a}\|_{C^0(\IS^2)}\|\tilde{u}\|_{H^1(\IS^2)}
 +\|\tilde{a}\|_{C^{0,\gamma}(\IS^2)}\|\tilde{u}\|_{H^s(\IS^2)})
 +C_1 \|\tilde{u}\|_{H^1(\IS^2)}
 \\
 &\leq C_3 \Bigl(
 \frac{1}{\min_{x\in\IS^2}\tilde{a}(x)}
 ( \|\tilde{a}\|_{C^{0,\gamma}(\IS^2)}  \|\tilde{u}\|_{H^{1}(\IS^2)}
 + \|f\|_{H^{-1+s}(\IS^2)})
 + \|\tilde{u}\|_{H^1(\IS^2)}\Bigr)
,
\end{align*}
where the first inequality is the estimate from~\cite[Lemma~3.2]{ChScTe_SINUM13} 
applied to our setting.

This argument can be repeated for all remaining $i\in\mathcal{I}$, 
which implies that $\tilde{u}\in H^{1+s}(\IS^2)$, 
and therefore we can establish the previous estimate for every $i\in\mathcal{I}$.
Hence, we sum this squared estimate over all $i\in\mathcal{I}$ 
and take the square root. 
We maximize
the constants over the finite index set 
$\mathcal{I}$ 
which establishes the estimate claimed in the lemma.
\end{proof}

It remains to bound the $H^1(\IS^2)$-norm in the previous lemma with the bound obtained from the Lax--Milgram lemma to obtain the following proposition.

\begin{proposition}\label{prop:reg_est}
For some $0\leq s<\gamma< 1$, let $\tilde{u}\in H^1(\IS^2)/\R$, 
$f\in H^{-1+s}(\IS^2)$, and $\tilde{a}\in C^{0,\gamma}(\IS^2)\cap C^0_+(\IS^2)$ 
 satisfy~\eqref{eq:ellPDEweak},
then, $\tilde{u}\in H^{1+s}(\IS^2)$ and there exists a constant $C$, which 
is independent of $\tilde{u},f$, and $\tilde{a}$, such that
\begin{equation*}
 \|\Phi_{f}(\tilde{a})\|_{H^{1+s}(\IS^2)}
  = \|\tilde{u}\|_{H^{1+s}(\IS^2)} 
 \leq 
 C 
 \|\tilde{a}\|_{C^{0,\gamma}(\IS^2)} \|1/\tilde{a}\|_{C^0(\IS^2)}^2
 \|f\|_{H^{-1+s}(\IS^2)}
 .
\end{equation*}
\end{proposition}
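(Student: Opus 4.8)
The plan is to combine the local regularity estimate of Lemma~\ref{lemma:reg_est} with the Lax--Milgram a priori bound~\eqref{est:Pth_H1}, and then to absorb the resulting lower-order terms by an elementary inequality relating the H\"older norm of $\tilde a$ to the supremum norm of $1/\tilde a$. Throughout, $\|1/\tilde a\|_{C^0(\IS^2)}=(\min_{x\in\IS^2}\tilde a(x))^{-1}$, which is finite since $\tilde a\in C^0_+(\IS^2)$.

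First I would pass from the quotient-space norm to the full Sobolev norm. Since $\tilde u\in H^1(\IS^2)/\R$ has vanishing mean, the second Poincar\'e inequality gives $\|\tilde u\|_{H^1(\IS^2)}\leq\sqrt{3/2}\,\|\tilde u\|_{H^1(\IS^2)/\R}$. Combining this with~\eqref{est:Pth_H1} and with the continuous embedding $H^{-1+s}(\IS^2)\hookrightarrow H^{-1}(\IS^2)$ (valid since $s\geq0$), one obtains a constant $C>0$, independent of $\tilde u$, $f$, and $\tilde a$, such that
\[
\|\tilde u\|_{H^1(\IS^2)}\;\leq\;C\,\|1/\tilde a\|_{C^0(\IS^2)}\,\|f\|_{H^{-1+s}(\IS^2)}.
\]

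Next I would insert this bound, together with $\tfrac{1}{\min\tilde a}=\|1/\tilde a\|_{C^0(\IS^2)}$, into the estimate of Lemma~\ref{lemma:reg_est} (which applies because $0\le s<\gamma<1$ and $\tilde u$ solves~\eqref{eq:ellPDEweak}). After collecting constants this yields
\[
\|\tilde u\|_{H^{1+s}(\IS^2)}\;\leq\;C\Bigl(\|\tilde a\|_{C^{0,\gamma}(\IS^2)}\|1/\tilde a\|_{C^0(\IS^2)}^2\|f\|_{H^{-1+s}(\IS^2)}+\|1/\tilde a\|_{C^0(\IS^2)}\|f\|_{H^{-1+s}(\IS^2)}\Bigr).
\]
To reach the stated form I would absorb the second summand into the first by observing that, up to a constant $c_0>0$ depending only on the chosen atlas, the H\"older norm dominates the supremum norm, so that $\|\tilde a\|_{C^{0,\gamma}(\IS^2)}\geq c_0\max_{x\in\IS^2}|\tilde a(x)|\geq c_0\min_{x\in\IS^2}\tilde a(x)=c_0\|1/\tilde a\|_{C^0(\IS^2)}^{-1}$; hence $\|1/\tilde a\|_{C^0(\IS^2)}\leq c_0^{-1}\|\tilde a\|_{C^{0,\gamma}(\IS^2)}\|1/\tilde a\|_{C^0(\IS^2)}^2$ and the two summands merge into one term of the claimed shape. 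The argument contains no deep obstacle; the only points requiring care are keeping every constant genuinely independent of $\tilde u$, $f$, $\tilde a$ (which Lemma~\ref{lemma:reg_est} and~\eqref{est:Pth_H1} already provide), the correct passage between $\|\cdot\|_{H^1(\IS^2)/\R}$ and $\|\cdot\|_{H^1(\IS^2)}$, and the norm comparison in the last step, so that the exponent $2$ on $\|1/\tilde a\|_{C^0(\IS^2)}$ and the single power of $\|\tilde a\|_{C^{0,\gamma}(\IS^2)}$ come out exactly as stated.
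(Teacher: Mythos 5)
Your proposal is correct and follows essentially the same route as the paper: apply Lemma~\ref{lemma:reg_est}, bound $\|\tilde u\|_{H^1(\IS^2)}$ via the Lax--Milgram estimate~\eqref{est:Pth_H1}, and absorb the lower-order summands using the fact that $\|\tilde a\|_{C^{0,\gamma}(\IS^2)}\|1/\tilde a\|_{C^0(\IS^2)}$ is bounded below (the paper phrases this as $\|\tilde a\|_{C^0(\IS^2)}/\min_{x\in\IS^2}\tilde a(x)\geq 1$, which is the same absorption trick). Your extra care with the passage between the quotient norm and the full $H^1$-norm and with the embedding $H^{-1+s}(\IS^2)\hookrightarrow H^{-1}(\IS^2)$ fills in details the paper leaves implicit.
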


\begin{proof}
 From Lemma~\ref{lemma:reg_est} we readily conclude that $\tilde{u}\in H^{1+s}(\IS^2)$. 
 Also this lemma implies with the $H^1(\IS^2)/\R$-estimate in~\eqref{est:Pth_H1}
 that there exist constants $C_1,C_2$ such that
 \begin{align*}
 \|\tilde{u}\|_{H^{1+s}(\IS^2)} 
 &\leq 
 C_1 
 \Bigl(
 \frac{1}{\min_{x\in\IS^2}\tilde{a}(x)}
 ( \|\tilde{a}\|_{C^{0,\gamma}(\IS^2)}  \|\tilde{u}\|_{H^{1}(\IS^2)}
 + \|f\|_{H^{-1+s}(\IS^2)})
 + \|\tilde{u}\|_{H^1(\IS^2)}\Bigr)
 \\
 &\leq 
 C_2 
  \|\tilde{a}\|_{C^{0,\gamma}(\IS^2)} \|1/\tilde{a}\|_{C^0(\IS^2)}^2
  \|f\|_{H^{-1+s}(\IS^2)}
 ,
\end{align*}
where we applied that $\|\tilde{a}\|_{C^0(\IS^2)}/(\min_{x\in\IS^2}\tilde{a}(x))\geq 1$ and $1/(\min_{x\in\IS^2}\tilde{a}(x))^2 = \|1/\tilde{a}\|_{C^0(\IS^2)}^2$ before summarizing the resulting terms.
\end{proof}

This finishes the proof of the base case in Proposition~\ref{prop:solution_map_regularity}.
In the following we show recursively higher order regularity with the known 
theory for the operator $\Id-\Delta_{\IS^2}$ presented in Section~\ref{sec:iGRF}
to analyze the domain and the respective range of $\Phi_{f}$ more precisely.

\begin{proposition}
Let $\iota\in\N_0$, $\gg\in(0,1)$, and $s\in[0,+\infty)$
satisfy $s<\iota+\gg$. If $f\in H^{-1+s}(\IS^2)$, 
then it holds that
 \begin{equation*}
 \Phi_{f} : 
 C^{\iota,\gg}(\IS^2)\cap C^0_+(\IS^2)
 \rightarrow 
 H^{1+s}(\IS^2)
 \end{equation*}
 is continuous with respect to the topology of $C^{\iota,\gg}(\IS^2)$.
 
 Moreover if $s\geq 1$, then for every $n\in\{0,\dots,\lfloor s \rfloor-1\}$, there exists a constant $C>0$ 
 such that for every $\tilde{a}\in C^{\iota,\gg}(\IS^2)\cap C^0_+(\IS^2)$,
 \begin{align*}
 &\|\Phi_{f}(\tilde{a})\|_{H^{1+(n+1)+\{s\}}(\IS^2)}\\
 &\qquad \leq C\|1/\tilde{a}\|_{C^{n,\gg}(\IS^2)} \bigl( \|f\|_{H^{1+(n-1)+\{s\}}(\IS^2)}
 + \|\tilde{a}\|_{C^{n+1,\gg}(\IS^2)}\|\Phi_{f}(\tilde{a})\|_{H^{1+n+\{s\}}(\IS^2)}\bigr)
\end{align*}
 where $\{s\}$ denotes the fractional part of $s$.
\end{proposition}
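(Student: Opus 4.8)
The plan is to recast the variational problem~\eqref{eq:ellPDEweak} as an identity for $\Id-\Delta_{\IS^2}$ and then bootstrap using the mapping property~\eqref{eq:inv_Id-Beltrami} and the multiplier estimate of Proposition~\ref{prop:Hoelder_Sobolev_prod}. Without loss of generality one takes $\iota=\lfloor s\rfloor$ and $\gg\in(\{s\},1)$: for a finer pair $(\iota,\gg)$ one has $C^{\iota,\gg}(\IS^2)\subset C^{\lfloor s\rfloor,\gg'}(\IS^2)$ with a suitable $\gg'\in(\{s\},1)$, and continuity in a finer topology follows from continuity in a coarser one. For $s<1$ the mapping statement is Proposition~\ref{prop:reg_est} (with continuity obtained by the argument at the end of this proof), so for the recursion we may assume $s\geq1$, whence $\iota\geq1$ and $\tilde a\in C^1(\IS^2)$. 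First I would establish the \emph{key identity}: writing $\tilde u=\Phi_f(\tilde a)$, testing~\eqref{eq:ellPDEweak} against $v/\tilde a\in C^1(\IS^2)\subset H^1(\IS^2)$ for $v\in C^\infty(\IS^2)$ (legitimate because $f(1)=0$ lets~\eqref{eq:ellPDEweak} extend to all of $H^1(\IS^2)$) and using $\tilde a\,\nabla_{\IS^2}(v/\tilde a)=\nabla_{\IS^2}v-v\,\tilde a^{-1}\nabla_{\IS^2}\tilde a$ gives $(\nabla_{\IS^2}\tilde u,\nabla_{\IS^2}v)=(\tilde a^{-1}(f+\nabla_{\IS^2}\tilde a\cdot\nabla_{\IS^2}\tilde u),v)$ for all such $v$, that is, in $\mathcal{D}'(\IS^2)$,
\begin{equation*}
(\Id-\Delta_{\IS^2})\tilde u=\tilde u+\tilde a^{-1}\bigl(f+\nabla_{\IS^2}\tilde a\cdot\nabla_{\IS^2}\tilde u\bigr).
\end{equation*}

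The bootstrap then runs as follows. Suppose $\tilde u\in H^{1+n+\{s\}}(\IS^2)$ for some $n\in\{0,\dots,\lfloor s\rfloor-1\}$; for $n=0$ this is the base case from Proposition~\ref{prop:reg_est}. Then $\nabla_{\IS^2}\tilde u$ has components in $H^{n+\{s\}}(\IS^2)$ of norm at most $C\|\tilde u\|_{H^{1+n+\{s\}}(\IS^2)}$; since $n+1\leq\iota$, $\nabla_{\IS^2}\tilde a$ has components in $C^{n,\gg}(\IS^2)$ of norm at most $C\|\tilde a\|_{C^{n+1,\gg}(\IS^2)}$; and $f\in H^{-1+s}(\IS^2)\subset H^{n+\{s\}}(\IS^2)$ because $n\leq\lfloor s\rfloor-1$. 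Two applications of Proposition~\ref{prop:Hoelder_Sobolev_prod}, first to $\nabla_{\IS^2}\tilde a\cdot\nabla_{\IS^2}\tilde u$ and then, with $\tilde a^{-1}\in C^{n,\gg}(\IS^2)$, to $\tilde a^{-1}(f+\nabla_{\IS^2}\tilde a\cdot\nabla_{\IS^2}\tilde u)$ — both admissible because $n+\{s\}<n+\gg$ — place the right-hand side of the key identity in $H^{n+\{s\}}(\IS^2)$ with norm at most $\|\tilde u\|_{H^{n+\{s\}}(\IS^2)}+C\|1/\tilde a\|_{C^{n,\gg}(\IS^2)}(\|f\|_{H^{1+(n-1)+\{s\}}(\IS^2)}+\|\tilde a\|_{C^{n+1,\gg}(\IS^2)}\|\tilde u\|_{H^{1+n+\{s\}}(\IS^2)})$. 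By~\eqref{eq:inv_Id-Beltrami} the operator $(\Id-\Delta_{\IS^2})^{-1}$ maps $H^{n+\{s\}}(\IS^2)$ boundedly into $H^{n+\{s\}+2}(\IS^2)=H^{1+(n+1)+\{s\}}(\IS^2)$, so $\tilde u$ lies in that space; absorbing $\|\tilde u\|_{H^{n+\{s\}}(\IS^2)}\leq\|\tilde u\|_{H^{1+n+\{s\}}(\IS^2)}\leq\|1/\tilde a\|_{C^{n,\gg}(\IS^2)}\|\tilde a\|_{C^{n+1,\gg}(\IS^2)}\|\tilde u\|_{H^{1+n+\{s\}}(\IS^2)}$, where we used $\|\tilde a\|_{C^0(\IS^2)}\|1/\tilde a\|_{C^0(\IS^2)}\geq1$, into the last summand yields the asserted recursion. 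Iterating over $n=0,\dots,\lfloor s\rfloor-1$ gives $\tilde u\in H^{1+s}(\IS^2)$.

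For the continuity, combining the recursion with the base estimate of Proposition~\ref{prop:reg_est} gives, for each fixed $\tilde a$, a bound $\|\Phi_g(\tilde a)\|_{H^{1+s}(\IS^2)}\leq C(\tilde a)\|g\|_{H^{-1+s}(\IS^2)}$ valid for all $g\in H^{-1+s}(\IS^2)$ with $g(1)=0$, where $C(\tilde a)$ depends continuously on $\tilde a\in C^{\iota,\gg}(\IS^2)\cap C^0_+(\IS^2)$. If $\tilde a_k\to\tilde a$ in $C^{\iota,\gg}(\IS^2)$ with $\tilde a>0$, then $1/\tilde a_k\to1/\tilde a$ in $C^{\iota,\gg}(\IS^2)$, so $(C(\tilde a_k))_{k}$ is bounded, and $w_k:=\Phi_f(\tilde a_k)-\Phi_f(\tilde a)$ solves~\eqref{eq:ellPDEweak} with coefficient $\tilde a_k$ and right-hand side $g_k:=\nabla_{\IS^2}\cdot((\tilde a_k-\tilde a)\nabla_{\IS^2}\Phi_f(\tilde a))$, which satisfies $g_k(1)=0$ and, by Proposition~\ref{prop:Hoelder_Sobolev_prod}, $\|g_k\|_{H^{-1+s}(\IS^2)}\leq C\|\tilde a_k-\tilde a\|_{C^{\iota,\gg}(\IS^2)}\|\Phi_f(\tilde a)\|_{H^{1+s}(\IS^2)}\to0$; hence $w_k=\Phi_{g_k}(\tilde a_k)$ and $\|w_k\|_{H^{1+s}(\IS^2)}\leq(\sup_k C(\tilde a_k))\|g_k\|_{H^{-1+s}(\IS^2)}\to0$. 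The main obstacle I anticipate is making the key identity fully rigorous — justifying the passage from the weak form tested against $v/\tilde a$ to the distributional identity and checking that every product appearing in it and in the bootstrap is covered by Proposition~\ref{prop:Hoelder_Sobolev_prod} with admissible indices — together with the order bookkeeping $n+\{s\}<n+\gg$ and $n+\{s\}\leq-1+s$, which hold precisely because $\{s\}<\gg$ and $n\leq\lfloor s\rfloor-1$.
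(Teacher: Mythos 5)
Your proof is correct and follows essentially the same route as the paper's: the same identity $(\Id-\Delta_{\IS^2})\tilde u = \tilde u + \tilde a^{-1}(f+\nabla_{\IS^2}\tilde a\cdot\nabla_{\IS^2}\tilde u)$ obtained by testing with $v/\tilde a$, the same bootstrap combining the mapping property~\eqref{eq:inv_Id-Beltrami} of $(\Id-\Delta_{\IS^2})^{-1}$ with the product estimate of Proposition~\ref{prop:Hoelder_Sobolev_prod}, and the same absorption of the lower-order term via $\|\tilde a\|_{C^0(\IS^2)}\|1/\tilde a\|_{C^0(\IS^2)}\geq 1$. The only organizational difference is that you establish continuity once at the top level from the a priori bound $\|\Phi_g(\tilde a)\|_{H^{1+s}(\IS^2)}\leq C(\tilde a)\|g\|_{H^{-1+s}(\IS^2)}$ applied to the perturbation identity for $\tilde a_k-\tilde a$, whereas the paper carries continuity through the induction level by level; both arguments rest on the same difference equation and the same estimates.
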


\begin{proof}
The case $s\in[0,1)$ will serve as a base case for an induction argument.
There the case $s=0$ is already known from Proposition~\ref{prop:solution_map_cont}.
So let $s\in(0,1)$ and assume that $\iota=0$ and $\gamma\in(s,1)$.
From Proposition~\ref{prop:reg_est} we infer that $\Phi_{f}(\tilde{a})\in H^{1+s}(\IS^2)$,
which establishes the claimed domain and range of $\Phi_f$.
To prove the continuity of $\Phi_{f}$ let 
$(\tilde{a}_j,j\in\N_0)$ be a sequence in $ C^{0,\gg}(\IS^2)\cap\ C^0_+(\IS^2)$
such that $\|\tilde{a}_j - \tilde{a}_0\|_{ C^{0,\gg}(\IS^2)}\rightarrow 0$ as $j\to+\infty$.
We observe that for every $j\in\N$, it holds that
\begin{equation}\label{eq:sol_difference_H-1}
 (\tilde{a}_0\nabla_{\IS^2}(\Phi_{f}(\tilde{a}_0)-\Phi_{f}(\tilde{a}_j)),\nabla_{\IS^2} v)
 =
 (-(\tilde{a}_0-\tilde{a}_j)\nabla_{\IS^2}\Phi_{f}(\tilde{a}_j),\nabla_{\IS^2} v)
 \quad \forall v\in H^1(\IS^2)/\R
 .
\end{equation}
Since $\Phi_{f}(\tilde{a}_j)\in H^{1+s}(\IS^2)$, $j\in\N$, 
we obtain with Proposition~\ref{prop:Hoelder_Sobolev_prod}
that there exist constants $C_1,C_2$ such that
\begin{align*}
\|\nabla_{\IS^2}\cdot((\tilde{a}_0-\tilde{a}_j)\nabla_{\IS^2}\Phi_{f}(\tilde{a}_j))\|_{H^{-1+s}(\IS^2)}
&\leq C_1 
\|(\tilde{a}_0-\tilde{a}_j)\nabla_{\IS^2}\Phi_{f}(\tilde{a}_j)\|_{H^{s}(\IS^2)}
\\
&\leq C_2
\|\tilde{a}_0-\tilde{a}_j\|_{C^{0,\gamma}(\IS^2)} \|\Phi_{f}(\tilde{a}_j)\|_{H^{1+s}(\IS^2)}
.
\end{align*}
Hence, Proposition~\ref{prop:reg_est} applied to the setting in~\eqref{eq:sol_difference_H-1}
implies that there exists a constant $C$, which is independent of $(\tilde{a}_j,j\in\N_0)$ and $f$, 
such that for every $j\in\N$, it holds that
\begin{equation*}
 \|\Phi_{f}(\tilde{a}_0) - \Phi_{f}(\tilde{a}_j)\|_{H^{1+s}(\IS^2)}
 \leq C 
 \frac{\|\tilde{a}_0\|_{C^{0,\gg}(\IS^2)}}{(\min_{x\in\IS^2}\tilde{a}_0(x))^2}
 \frac{\|\tilde{a}_j\|_{C^{0,\gg}(\IS^2)}}{(\min_{x\in\IS^2}\tilde{a}_j(x))^2}
 \|f\|_{H^{-1+s}(\IS^2)} \|\tilde{a}_0- \tilde{a}_j\|_{C^{0,\gg}(\IS^2)} 
 .
\end{equation*}
We have $\|\tilde{a}_0-\tilde{a}_j\|_{C^0(\IS^2)}=:\epsilon_j\rightarrow 0$ as $j\rightarrow +\infty$, 
which implies that $\epsilon_j\leq 1/2 \min_{x\in\IS^2}\tilde{a}_0(x)$ for 
every $j$ that are sufficiently large, i.e., $j > j_0$ for some $j_0 \in \N$.
Since $\tilde{a}_j(x')\geq \min_{x\in\IS^2}\tilde{a}_0(x) - \epsilon_j$ for 
every $x'\in\IS^2$, 
we obtain that $1/ \min_{x\in\IS^2}\tilde{a}_j(x)\leq 2/\min_{x\in\IS^2}\tilde{a}_{0}(x)$ 
for every $j>j_0$.
Since $\|\tilde{a}_j\|_{C^{0,\gg}(\IS^2)}$ and $(\min_{x\in\IS^2}\tilde{a}_j(x))^{-2}$ 
can be bounded independently of $j$,
it follows that $\|\Phi_{f}(\tilde{a}_j) - \Phi_{f}(\tilde{a}_0)\|_{H^{1+s}(\IS^2)} \rightarrow 0$ 
as $j\to +\infty$, i.e.,
$\Phi_f:C^{0,\gg}(\IS^2)\cap C^0_+(\IS^2)\rightarrow H^{1+s}(\IS^2)$
is continuous.

For $s\geq 1$, it must hold that $\iota\geq1$. 
Since $\tilde{a}\in C^{\iota,\gg}(\IS^2)\cap C^0_+(\IS^2)$ and $\tilde{u}:=\Phi_{f}(\tilde{a})$,
for any $w\in H^1(\IS^2)$, we take 
$w/\tilde{a} - 1/|\IS^2|\int_{\IS^2} w/\tilde{a}\mathrm d\sigma \in H^1(\IS^2)/\R$ 
as a test function and thus rewrite the PDE in~\eqref{eq:ellPDEweak} as
\begin{equation*}
 (\tilde{u} , w ) + \Bigl(f,\frac{w}{\tilde{a}}\Bigr)
 =
 (\tilde{u} , w ) + \Bigl(\tilde{a}\nabla_{\IS^2}\tilde{u},\nabla_{\IS^2}\Bigl(\frac{w}{\tilde{a}}\Bigr)\Bigr)
 =
 (\tilde{u} , w ) + (\nabla_{\IS^2}\tilde{u},\nabla_{\IS^2}w) 
 - \Bigl(\frac{\nabla_{\IS^2}\tilde{a}\cdot\nabla_{\IS^2}\tilde{u}}{\tilde{a}},w\Bigr)
 ,
\end{equation*}
where we applied that $(f,1)=0$.
Hence, for every $w\in H^1(\IS^2)$, it holds that
\begin{equation*}
 (\tilde{u} , w ) + (\nabla_{\IS^2}\tilde{u},\nabla_{\IS^2}w)
 =
 \Bigl(\frac{f + \nabla_{\IS^2}\tilde{a}\cdot\nabla_{\IS^2}\tilde{u}}{\tilde{a}},w\Bigr) + (\tilde{u} , w )
 ,
\end{equation*}
which is stated with equality in $H^{-1}(\IS^2)$ as
\begin{equation}\label{eq:Reg_H^1+s}
 (\Id - \Delta_{\IS^2}) \tilde{u} 
 =
 \frac{f + \nabla_{\IS^2}\tilde{a}\cdot\nabla_{\IS^2}\tilde{u}}{\tilde{a}} + \tilde{u}
 =:
 F.
\end{equation}
We observe with~\eqref{eq:inv_Id-Beltrami} 
that $(\Id - \Delta_{\IS^2})^{-1}$ is a linear and bounded operator from 
$H^{r}(\IS^2)$ to $H^{r+2}(\IS^2)$ for every $r\in\R$.
The claim is now shown by induction.
Let us write $s=\lfloor s\rfloor + \{s\}$, 
where $\{s\}\in[0,1)$ is the fractional part of $s$, and assume as induction hypothesis that 
$\Phi_f:C^{n,\gg}(\IS^2)\cap C^0_+(\IS^2)\rightarrow H^{1+n+\{s\}}(\IS^2)$
is continuous for every  $n\in\{0,1,\dots,\lfloor s\rfloor -1\}$,
which we already showed for $n=0$.
Let $n\in\{0,1,\dots,\lfloor s\rfloor -1\}$ and let 
$\tilde{a}\in C^{n+1,\gg}(\IS^2)\cap C^0_+(\IS^2)$.
Since by our induction hypothesis $\tilde{u}=\Phi_{f}(\tilde{a})\in H^{1+n+\{s\}}(\IS^2)$,
we conclude with Proposition~\ref{prop:Hoelder_Sobolev_prod}
that the right hand side $F$ in~\eqref{eq:Reg_H^1+s} is in 
$H^{1+(n-1)+\{s\}}$. 
The fact that $(\Id - \Delta_{\IS^2})^{-1}$ is a linear and bounded 
operator from $H^{1+(n-1)+\{s\}}(\IS^2)$ to $H^{1+(n+1)+\{s\}}(\IS^2)$
implies that 
$\tilde{u}=\Phi_{f}(\tilde{a})\in H^{1+(n+1)+\{s\}}(\IS^2)$.
Moreover it implies with Proposition~\ref{prop:Hoelder_Sobolev_prod} a regularity estimate for $\tilde{u}=\Phi_{f}(\tilde{a})$, 
i.e., there exist constants $C_1,C_2,C_3$ that are independent of $\tilde{a}$ and $f$ such that
\begin{align*}
 \|\Phi_{f}(\tilde{a})&\|_{H^{1+(n+1)+\{s\}}(\IS^2)}\\
 &\leq C_1\|F\|_{H^{1+(n-1)+\{s\}}(\IS^2)}\\
 &\leq C_2\Bigl(\|1/\tilde{a}\|_{C^{n,\gg}(\IS^2)} \bigl( \|f\|_{H^{1+(n-1)+\{s\}}(\IS^2)}
 + \|\tilde{a}\|_{C^{n+1,\gg}(\IS^2)}\|\Phi_{f}(\tilde{a})\|_{H^{1+n+\{s\}}(\IS^2)}\bigr)\\
 & \hspace*{3em} + \|\Phi_{f}(\tilde{a})\|_{H^{1+(n-1)+\{s\}}(\IS^2)}\Bigr)\\
 & \leq C_2 \|1/\tilde{a}\|_{C^{n,\gg}(\IS^2)} \bigl( \|f\|_{H^{1+(n-1)+\{s\}}(\IS^2)}
    + \|\tilde{a}\|_{C^{n+1,\gg}(\IS^2)}\|\Phi_{f}(\tilde{a})\|_{H^{1+n+\{s\}}(\IS^2)}\bigr),
\end{align*}
where the last inequality holds since 
  \begin{equation*}
    \|\Phi_{f}(\tilde{a})\|_{H^{1+(n-1)+\{s\}}(\IS^2)} 
      \leq \|\Phi_{f}(\tilde{a})\|_{H^{1+n+\{s\}}(\IS^2)}
  \end{equation*}
and $\|1/\tilde{a}\|_{C^{n,\gg}(\IS^2)}\|\tilde{a}\|_{C^{n+1,\gg}(\IS^2)} \geq 1$.
This is the desired recursion formula and implies the claimed domain and range of $\Phi_{f}$.
To prove continuity of $\Phi_{f}$ let 
$(\tilde{a}_j,j\in\N_0)$ be a sequence in $ C^{n+1,\gg}(\IS^2)\cap C^0_+(\IS^2)$
such that $\|\tilde{a}_j - \tilde{a}_0\|_{ C^{n+1,\gg}(\IS^2)}\rightarrow 0$ as $j\to+\infty$
and let $(\tilde{u}_j=\Phi_{f}(\tilde{a}_j),j\in\N_0)$ be the sequence of respective solutions.
The same manipulations that showed~\eqref{eq:Reg_H^1+s} imply with~\eqref{eq:sol_difference_H-1} that
\begin{equation*}
 (\Id-\Delta_{\IS^2})(\tilde{u}_0 - \tilde{u}_j) 
 =\frac{\nabla_{\IS^2}\cdot((\tilde{a}_0-\tilde{a}_j)\nabla_{\IS^2}\tilde{u}_j) + \nabla_{\IS^2}\tilde{a}_0\cdot\nabla_{\IS^2}(\tilde{u}_0 - \tilde{u}_j)}
	{\tilde{a}_0} 
	+ (\tilde{u}_0 - \tilde{u}_j).
\end{equation*}
Similar estimates as for $\Phi_{f}(\tilde{a})$ above imply that
\begin{align*}
 \|\Phi_{f}(\tilde{a}_0) - \Phi_{f}(\tilde{a}_j)&\|_{H^{1+(n+1)+\{s\}}(\IS^2)}\\
 &\leq C'\Bigl( \|1/\tilde{a}_0\|_{C^{n,\gg}(\IS^2)}
	\bigl( \|\tilde{a}_0-\tilde{a}_j\|_{C^{n+1,\gg}(\IS^2)}\|\Phi_{f}(\tilde{a}_0)\|_{H^{1+n+\{s\}}(\IS^2)}\\
	&\hspace*{10em}	+ \|\tilde{a}_0\|_{C^{n+1,\gg}(\IS^2)}\|\Phi_{f}(\tilde{a}_0) - \Phi_{f}(\tilde{a}_j)\|_{H^{1+n+\{s\}}(\IS^2)} \bigr)\\
	&\hspace*{3em} + \|\Phi_{f}(\tilde{a}_0) - \Phi_{f}(\tilde{a}_j)\|_{H^{1+(n-1)+\{s\}}(\IS^2)}\Bigr).
\end{align*}
Since by our induction hypothesis 
$\Phi_f:C^{n,\gg}(\IS^2)\cap C^0_+(\IS^2)\rightarrow H^{1+n+\{s\}}(\IS^2)$
is continuous, 
$\|\Phi_{f}(\tilde{a}_0) - \Phi_{f}(\tilde{a}_j)\|_{H^{n+\{s\}}(\IS^2)}\leq\|\Phi_{f}(\tilde{a}_0) - \Phi_{f}(\tilde{a}_j)\|_{H^{1+n+\{s\}}(\IS^2)}\rightarrow 0$ as $j\to+\infty$,
which implies with  $\|\tilde{a}_0 - \tilde{a}_j\|_{ C^{n+1,\gg}(\IS^2)}\rightarrow 0$ as $j\to+\infty$
that $\|\Phi_{f}(\tilde{a}_0) - \Phi_{f}(\tilde{a}_j)\|_{H^{1+(n+1)+\{s\}}(\IS^2)}\rightarrow 0$ as $j\to+\infty$,
i.e., 
$\Phi_f:C^{n+1,\gg}(\IS^2)\cap C^0_+(\IS^2)\rightarrow H^{1+(n+1)+\{s\}}(\IS^2)$
is continuous.
This finishes the induction and the proof of the proposition.
\end{proof}

We finish this appendix by remarking that the $H^{1+s}(\IS^2)$-regularity for every  $s<\gb/2$ 
of the solution can also be deduced from
higher order H\"older regularity, which is implied by
Schauder estimates, cp.\ ~\cite[Chapters~6 and~8]{GilTrud2ndEd}, applied to pullbacks 
of the solution to the chart domains. 
Specifically, the continuous embedding $C^{\iota,\gg}(\IS^2)\subset H^{s'}(\IS^2)$ for 
$\iota\in\N_0, \gg\in(0,1)$, and $s'\geq0$ such that $\iota+\gg> s'$,
which is an immediate consequence of Proposition~\ref{prop:Hoelder_Sobolev_prod},
would imply $H^{1+s}(\IS^2)$-regularity.
Since the explicit dependence of the coefficients of the elliptic operator
in these estimates is analyzed in \cite[Section~8.2]{Herrmann13},
$L^p$-integrability could also be deduced.

\section{Finite Element convergence for elliptic PDEs}\label{app:proof:prop:solution_map_approx_FE}

\begin{proof}[Proof of Proposition~\ref{prop:solution_map_approx_FE}]
We observe that Proposition~\ref{prop:solution_map_regularity} implies
that $\Phi_{f}(\tilde{a})\in H^{1+s}(\IS^2)$.
The approximation property 
for integer orders, cp.\ ~\cite[Proposition~2.7]{Demlow2009}, 
implies by interpolation
that for every $h$ and $k$, there exists an interpolation operator 
$I^{h,k}$ which is, for every $s>0$, continuous from 
$H^{1+s}(\IS^2) \rightarrow S^{k}(\IS^2,\cT_h)$
and a constant $C_s > 0$ such that for every $h>0$ and for every 
function $v\in H^{1+s}(\IS^2)$, it holds that
\begin{equation}\label{eq:FEIntErr}
\| v - I^{k,h} v \|_{H^1(\IS^2)} 
\leq 
C_s \;  h^{\min\{s,k\}} \; \| v \|_{H^{1+s}(\IS^2)} 
,
\end{equation}
where $C_s>0$ is independent of $h$ 
but depends on $s$.
The coercivity and Galerkin orthogonality imply
in the usual fashion that for every $v^h\in V^{h,k}$, it holds that
\begin{align}\label{eq:Ceas_lemma}
\| \Phi_{f}(\tilde{a}) - \Phi_{f}^{h,k}(\tilde{a}) \|^2_{H^1(\IS^2)/\R} 
& \leq
\frac{1}{\min_{x\in\IS^2}\tilde{a}(x)}
(\tilde{a} \nabla_{\IS^2} ( \Phi_{f}(\tilde{a}) - \Phi_{f}^{h,k}(\tilde{a})) ,  
                 \nabla_{\IS^2}  (\Phi_{f}(\tilde{a}) - \Phi_{f}^{h,k}(\tilde{a})) )
\nonumber\\
& = 
\frac{1}{\min_{x\in\IS^2}\tilde{a}(x)}
(\tilde{a} \nabla_{\IS^2} ( \Phi_{f}(\tilde{a}) - \Phi_{f}^{h,k}(\tilde{a})) ,  \nabla_{\IS^2}  (\Phi_{f}(\tilde{a}) - v^{h}) )
\\
& \leq 
\frac{\|\tilde{a}\|_{C^0(\IS^2)}}{\min_{x\in\IS^2}\tilde{a}(x)}
\|  \Phi_{f}(\tilde{a}) - \Phi_{f}^{h,k}(\tilde{a}) \|_{H^1(\IS^2)/\R}
\|  \Phi_{f}(\tilde{a}) - v^h \|_{H^1(\IS^2)/\R}
.\nonumber
\end{align}
When we equip $H^1(\IS^2)/\R$ with the $H^1(\IS^2)$-norm, 
the orthogonal decomposition $H^1(\IS^2) = H^1(\IS^2)/\R \oplus \Span\{1\}$ holds,
which implies with~\eqref{eq:Ceas_lemma} that
\begin{align*}
\|  \Phi_{f}(\tilde{a}) - \Phi_{f}^{h,k}(\tilde{a}) \|_{H^1(\IS^2)/\R}
&\leq 
\frac{\|\tilde{a}\|_{C^0(\IS^2)}}{\min_{x\in\IS^2}\tilde{a}(x)} 
\inf_{v^h \in V^{h,k}} 
\| \Phi_{f}(\tilde{a}) - v^h\|_{H^1(\IS^2)/\R}
\\
&\leq
\frac{\|\tilde{a}\|_{C^0(\IS^2)}}{\min_{x\in\IS^2}\tilde{a}(x)} 
\inf_{v^h \in V^{h,k}} 
\| \Phi_{f}(\tilde{a}) - v^h\|_{H^1(\IS^2)}
\\
&=
\frac{\|\tilde{a}\|_{C^0(\IS^2)}}{\min_{x\in\IS^2}\tilde{a}(x)} 
\inf_{v^h \in S^{k}(\IS^2,\cT_h)} 
\| \Phi_{f}(\tilde{a}) - v^h\|_{H^1(\IS^2)}
,
\end{align*}
where we also used that $\Phi_{f}(\tilde{a})\in H^1(\IS^2)/\R$.
Now the claim follows with~\eqref{eq:FEIntErr}.
\end{proof}
\bibliographystyle{plain}
\bibliography{diffs2_revise1}
\end{document}